\newcommand{\C}{{\mathbb{C}}}
\newcommand{\Z}{{\mathbb{Z}}}
\newcommand{\id}{{\rm id}}
\newcommand{\comp}{{\rm Comp}}
\newcommand{\sing}{{\rm Sing}}
\newcommand{\mf}[1]{{\mathfrak{#1}}}
\newcommand{\mr}[1]{{\mathrm{#1}}}
\newcommand{\mc}[1]{{\mathcal{#1}}}
\newcommand{\mb}[1]{{\mathbb{#1}}}
\newcommand{\wt}[1]{{\widetilde{#1}}}
\newcommand{\wh}[1]{{\widehat{#1}}}
\let\ssstyle\scriptscriptstyle
\newcommand{\iso}{{\overset{\sim}{\longrightarrow}}}
\newcommand{\inte}[1]{\overset{\circ}{{#1}}}
\let\CAL=\mathcal%
\def\mathcal#1{{\CAL#1}}%
\newtheorem{teo}{Th\'{e}or\`{e}me}[section]
\newtheorem{thmm}{Th\'eor\`eme}
\newtheorem{lema}[teo]{Lemme}
\newtheorem{sublema}[teo]{Sous-Lemme}
\newtheorem{prop}[teo]{Proposition}
\newtheorem{defin}[teo]{D\'{e}finition}
\newtheorem{cor}[teo]{Corollaire}
\newtheorem{obs2}[teo]{Remarque}
\newtheorem{recap2}[teo]{R\'{e}capitulation}
\newtheorem{ex2}[teo]{Exemple}
\newenvironment{obs}{\begin{obs2}\rm}{\end{obs2}}
\newenvironment{ex}{\begin{ex2}\rm}{\end{ex2}}
\newenvironment{dem}{\begin{proof}[Preuve]}{\end{proof}}
\newenvironment{dem2}[1]{\begin{proof}[Preuve #1]}{\end{proof}}
\def\bibartp#1#2#3#4#5#6#7#8
\def\bibart#1#2#3#4#5#6
\def\bibliv#1#2#3#4#5
\def\bibaart#1#2#3#4
\begin{document}
\title[Mapping Class Group d'un germe de courbe]{Mapping Class Group  d'un germe de courbe plane singuli\`ere}
\date{\today}
\author{David Mar\'{\i}n et Jean-Fran\c{c}ois Mattei}
\thanks{Le premier auteur a \'{e}t\'{e} partiellement financ\'{e} par les projets MTM2007-65122 et  MTM2008-02294 du   Ministerio de Educaci\'on y Ciencia de Espa\~{n}a / FEDER}
\address{Departament de Matem\`{a}tiques \\
Universitat Aut\`{o}noma de
Barcelona \\
E-08193 Bellaterra (Barcelona)\\
Spain} \email{davidmp@mat.uab.es}

\address{Institut de Math\'{e}matiques de Toulouse\\
Universit\'{e} Paul Sabatier\\
118, Route de Narbonne\\
F-31062 Toulouse Cedex 9, France} \email{jean-francois.mattei@math.univ-toulouse.fr}

\begin{abstract}
Nous prouvons que toute conjugaison topologique entre germes de courbes holomorphes singuli\`{e}res du plan complexe est homotope \`{a} une conjugaison qui s'\'{e}tend aux diviseurs exceptionnels des d\'{e}singu\-la\-ri\-sa\-tions minimales de ces courbes. Gr\^{a}ce \`{a} ce r\'{e}sultat, nous donnons une pr\'{e}sen\-ta\-tion explicite d'un sous-groupe d'indice fini du \emph{mapping class group} d'un  germe de courbe plane.
\end{abstract}

\maketitle

\tableofcontents

\section*{Vocabulaire}\label{chnotations}

\indent -Si $A$ est un sous-ensemble d'un espace topologique,  on d\'{e}signe par $\inte{A}$ son int\'{e}rieur, $\overline{A}$ son adh\'{e}rence et on note  $\delta A := A\setminus \inte{A}$. Si $A$ est une vari\'{e}t\'{e} \`{a} bord, son bord est not\'{e} $\partial A$.\\
\indent - Pour une courbe analytique $X$, on note
               $\mathrm{Sing}(X)$ l'ensemble de ses points singuliers et
            $\comp(X)$ la collection de ses composantes irr\'{e}duc\-tibles. Deux composantes irr\'{e}ductibles sont dites \emph{adjacentes} si elles sont distinctes et d'intersection non-vide. Le nombre $v(Y)$ de composantes de $X$ adjacentes \`{a}   $Y\in \comp(X)$   est appel\'{e} \emph{valence de $Y$}. Nous notons $\comp_k(X)$ la collection des composantes de $X$ de valence $\geq k$. 
           Une composante connexe de l'adh\'{e}rence de $X\setminus \bigcup_{Y\in \comp_3(X)}Y$ est appel\'{e}e \emph{cha\^{\i}ne (g\'{e}om\'{e}trique)} de $X$, si elle ne contient pas de composante de valence 1 ; elle est appel\'{e}e \emph{branche morte (g\'{e}om\'{e}trique) de} $X$, sinon.

\section{Introduction}\label{introduction}
Soient $(S,0)$ et $(S',0)$   deux germes de courbes holomorphes singuli\`{e}res  contenues dans $(\C^{2},0)$. Une \emph{conjugaison topologique entre $(S,0)$ et $(S',0)$}
est un germe d'hom\'{e}o\-mor\-phisme $h:(\C^{2},0)\to(\C^{2},0)$ tel que $(h(S),0)=(S',0)$. Nous dirons que la conjugaison $h$ est \emph{excellente}, si elle se rel\`{e}ve  aux r\'{e}ductions des singularit\'{e}s de $S$ et  de $S'$, d\'{e}finissant ainsi un hom\'{e}omor\-phisme entre des voisinages des diviseurs exceptionnels $\mc E_S$ et $\mc E_{S'}$ de ces r\'{e}ductions, qui :
\begin{itemize}
\item conjugue topologiquement $\mc E_S$ \`{a} $\mc E_{S'}$,
\item est compatible aux fibrations de Hopf des  composantes  de $\mc E_S$ et $\mc E_{S'}$, en dehors d'un voisinages des singularit\'{e}s du transform\'{e} total $\mc D_S$  de $S$,
\item est holomorphe au voisinage de chaque  point singulier de $\mc D_S$.
\end{itemize}
%
%
\noindent La question naturelle de  l'existence de conjugaisons excellentes est r\'{e}solue par les  r\'{e}sultats classiques de O. Zariski et M. Lejeune \cite{Zariski,Lejeune}. Les techniques de plombage
introduites par Mumford \cite{Mumford} et d\'{e}velopp\'{e}es par Neumann \cite{NeumannTAMS,EN,Neumann} permettent de pr\'{e}ciser ce probl\`{e}me et de calculer ais\'{e}ment certains invariants topologiques,  dont le \emph{groupe fondamental du compl\'{e}mentaire de $S$ : }
\begin{equation}\label{grpfondcompl}
\Gamma _S := \pi _1(\mb B_{\varepsilon }\setminus S, \cdot)\,,\quad \mb B_{\varepsilon }:=\{|z_1|^2 + |z_2|^2 \leq\varepsilon \}\,,\qquad
0<\varepsilon \ll 1\,.
\end{equation}

\medskip

L'objet de ce travail est de d\'{e}crire les ``classes d'homotopies'' des conjugaisons topologiques entre deux germes de courbes fix\'{e}s et de montrer que chaque classe contient une conjugaison excellente.

\medskip

Plus pr\'{e}cis\'{e}ment, nous disons que deux 
conjugaisons $f$, $g$ entre $(S,0)$ et $(S',0)$ sont \emph{fondamentalement \'{e}qui\-va\-lentes} et nous notons $f\asymp g$, si  
les restrictions de $f$ et $g$ \`{a} $\mathbb{B}_{\varepsilon}\setminus S$ sont homotopes en tant qu'applications \`{a} valeurs dans $\mathbb{B}_{\varepsilon'}\setminus S'$, pour $0<\varepsilon \ll \varepsilon' \ll 1$. Visiblement $\asymp$ est  une relation d'\'{e}quivalence sur l'ensemble  des conjugaisons topologiques entre $S$ et $S'$.  Notons que la   structure conique du compl\'{e}mentaire $\mathbb{B}_{\varepsilon}\setminus S$ au dessus de  $\partial \mb B_{\varepsilon}\setminus S$ et la suite suite exacte d'homotopie associ\'{e}e \`{a} sa structure de fibr\'{e} au dessus du cercle, montrent que  $\mathbb{B}_{\varepsilon}\setminus S$ est un espace d'Eilenberg-MacLane $K(\pi,1)$. La th\'{e}orie d'homotopie  classique implique alors que $f\asymp g$ si et seulement si les actions de $f$ et $g$ sur les groupes fondamentaux de $\mb B_{\varepsilon }\setminus S$ et $\mb B_{\varepsilon '}\setminus S'$ co\"{\i}ncident \`{a} automorphisme int\'{e}rieur pr\`{e}s (d\^{u} aux choix des points de base).

\medskip

Nous d\'{e}finissons un \emph{marquage de $S'$ par $S$}  comme une classe d'\'{e}quivalence d'une conjugaison entre $S$ et $S'$, pour la relation d'\'{e}quivalence fondamentale $\asymp$. Le r\'{e}sultat principal de ce travail est le suivant :

\begin{thmm}\label{thmA}
Tout marquage admet un repr\'{e}sentant excellent.
\end{thmm}

Notre construction  est enti\`{e}rement bas\'{e}e sur les r\'{e}sultats de d\'{e}composition des vari\'{e}t\'{e}s de dimension 3 de Waldhausen \cite{Waldhausen},  Jaco-Shalen \cite{JacoShalen} et Johannson \cite{Johannson}. Elle ne peut pas se d\'{e}duire des th\'{e}or\`{e}mes classiques de  Zariski-Lejeune \cite{Zariski,Lejeune}; pour s'en convaincre il suffit de consid\'{e}rer l'\'{e}ventualit\'{e}  $S=S'$, pour laquelle le r\'{e}sultat de Zariski-Lejeune est sans objet, alors que le th\'{e}or\`{e}me \ref{thmA} induit des r\'{e}sultats non-triviaux sur les automorphismes des germes de courbes.

\medskip

L'ensemble $\mc G_{S}$ de marquages d'un germe de courbe $S$ par lui-m\^{e}me est muni d'une structure de groupe par la composition. C'est l'analogue germifi\'{e} du \emph{mapping class group} des surfaces de Riemann.
La th\'{e}orie d'homotopie des espaces $K(\pi,1)$ indique que le groupe $\mc G_{S}$
se plonge dans le groupe des automorphismes ext\'{e}rieurs $\mr{Out}(\Gamma_{S})$ du groupe fondamental $\Gamma_{S}$ d\'{e}fini en (\ref{grpfondcompl}). L'image $\mr{Out}_{g}(\Gamma_{S})\subset \mathrm{Out}(\Gamma _S)$ de ce plongement  se caract\'{e}rise par la pr\'{e}servation de certaines donn\'{e}es alg\'{e}briques de $\Gamma_{S}$, qui sont de nature g\'{e}om\'{e}trique : la \emph{structure p\'{e}riph\'{e}rique munie de ses m\'{e}ridiens}, cf. la d\'{e}finition (\ref{isomgeom}), le th\'{e}or\`{e}me (\ref{pk}) et le corollaire (\ref{invgeo}).
Le sous-groupe $\mc G_{S}^{0}$ de $\mc G_{S}$ constitu\'{e} des germes d'hom\'{e}omorphismes fixant chaque composante irr\'{e}ductible de $S$, est distingu\'{e} et d'indice fini : il est \'{e}gal au  noyau du morphisme naturel de $\mc G_{S}$ dans le groupe le groupe $\mf S_{S}$ des permutations des composantes irr\'{e}ductibles de $S$. Le th\'{e}or\`{e}me pr\'{e}c\'{e}dent nous permet d'expliciter un syst\`{e}me de g\'{e}n\'{e}rateurs de $\mc G_{S}^{0}$; d\'{e}signons par $E_S : \mc B_S\to \mb C^2$ l'application de r\'{e}duction (minimale) de $S$, on a :

\begin{thmm}\label{thmB}
Il existe un \'{e}pimorphisme
$$\bigoplus\limits_{D}\mr{A}(D^{\scriptscriptstyle\bullet})\oplus\bigoplus\limits_{\mc C}\Z^{2}_{\mc C}\twoheadrightarrow \mc G_{S}^{0}\,,$$
o\`{u} : $D$ d\'{e}crit l'ensemble 
des composantes irr\'{e}ductibles de valence $v(D)\ge 3$ du diviseur total $\mc D_S:=E_S^{-1}(S)$, $\mr{A}(D^{\scriptscriptstyle\bullet})$ est le groupe d'Artin pur\footnote{i.e. le groupe de tresses pures du plan \`{a} $v(D)-1$ brins quotient\'{e} par son centre -qui est isomorphe \`{a} $\mb Z$.} de $D\cong \mb S^{2}$ point\'{e} par
$\mathrm{Sing}(\mc D_S)\cap D$,
$\mc C$ d\'{e}crit la collection $\mf C_S$ des cha\^{\i}nes de  $\mc D_S$
et $\mb Z^2_{\mc C}:=\mb Z^2$.
\end{thmm}

Remarquons que le groupe quotient $\mc G_{S}/\mc G_{S}^{0}$ correspond aux ``grandes sym\'{e}\-tries de $S$''. Notons aussi  que le graphe de la d\'{e}composition topologique de JSJ de la 3-vari\'{e}t\'{e} \`{a} bord obtenue en enlevant a la sph\`{e}re $\mb{S}^{3}_{\varepsilon}:=\partial \mb B_{\varepsilon }$ un voisinage tubulaire de l'entrelacs $S\cap\mb S_{\varepsilon }$, est constitu\'{e} de : $\comp_3(\mc D_S)$ comme ensemble de sommets  et $\mf C_S$ comme ensemble d'ar\^{e}tes. Ainsi, $\mc G_{S}^{0}$ est un groupe de graphe au sens de \cite{Serre}.
 Nous verrons sur un exemple explicite, qu'en g\'{e}n\'{e}ral l'\'{e}pimorphisme ci-dessus n'est pas un isomorphisme.\\

Le probl\`{e}me r\'{e}solu par le th\'{e}or\`{e}me A nous est apparu de fa\c{c}on naturelle dans l'\'{e}tude de la classification topologique de germes de feuilletages singuliers. Il joue un r\^{o}le cl\'{e} dans la r\'{e}solution de la conjecture de Cerveau-Sad \cite{CS,M-M,MM}.
\\

Le plan du travail est le suivant:

-Au  chapitre \ref{2} nous introduisons quelques notions sur la d\'{e}singularisation
minimale d'un germe de courbe singuli\`{e}re, ainsi que sur les tubes de Milnor (de dimension trois et quatre); elles nous permettront de pr\'{e}ciser l'\'{e}nonc\'{e} du  th\'{e}or\`{e}me principal ainsi que  la notion cl\'{e} de marquage.

-Au chapitre \ref{3} nous \'{e}tablissons les propri\'{e}t\'{e}s topologiques des tubes de Milnor qui nous seront utiles par  la suite. Ce chapitre est divis\'{e} en trois sections. Dans la premi\`{e}re nous donnons une pr\'{e}sentation du groupe fondamental du compl\'{e}mentaire d'une courbe singuli\`{e}re. Dans la deuxi\`{e}me nous pr\'{e}cisons la d\'{e}composition de Jaco-Shalen-Johannson du tube de Milnor de dimension trois, qui jouera un r\^{o}le cl\'{e} dans la preuve du th\'{e}or\`{e}me principal. Enfin, dans la troisi\`{e}me section nous \'{e}tudions les propri\'{e}t\'{e}s alg\'{e}briques de l'action d'une conjugaison topologique entre germes de courbes, sur quelques \'{e}l\'{e}ments remarquables
des groupes fondamentaux associ\'{e}s aux composantes de bord.

-Au chapitre \ref{4} nous donnons la preuve du th\'{e}or\`{e}me \ref{thmA}, structur\'{e}e en quatre sections: r\'{e}duction \`{a} la dimension trois, construction d'un hom\'{e}o\-mor\-phis\-me entre les $3$-tubes de Milnor compatible aux d\'{e}compositions de JSJ introduites dans la section \ref{3.2}, conjugaison des arbres duaux des diviseurs exceptionnels des d\'{e}singularisations, enfin extension aux $4$-tubes de Milnor.

-Finalement,  nous \'{e}tudions au chapitre \ref{5} les propri\'{e}t\'{e}s alg\'{e}briques du groupe $\mc G_{S}$ 
et nous d\'{e}montrons le th\'{e}or\`{e}me \ref{thmB}, \`{a} l'aide du th\'{e}or\`{e}me \ref{thmA} d\'{e}j\`{a} \'{e}tabli.



\section{Conjugaison de germes de courbes marqu\'{e}es}\label{2}

\subsection{D\'{e}singularisations et syst\`{e}mes locaux}

Dans tout cet article  $S$ d\'{e}signe l'intersection d'une courbe analytique de $\mb C^{2}$  avec une boule ferm\'{e} $\mb B:=\mb B_{r_0}$ de centre $0=(0,0)$ et rayon $r_{0}>0$ fix\'{e}. Nous supposons que $\mb B$ est une \emph{boule de Milnor pour $S$}, i.e. $0\in S$ et $S\setminus\{0\}$ est lisse et transverse \`{a} toutes les sph\`{e}res $\partial\mb B_{r}$, pour  $0<r\leq r_0$. Soit $E :\mc B \to \mb B$ l'application de r\'{e}duction (minimale) de $S$; nous notons $\mc D:=E^{-1}(S)$ le \emph{diviseur total}, $\mc E:=E^{-1}(0)$ le \emph{diviseur exceptionnel} et $\mc S := \overline{\mc D\setminus \mc E}$ la \emph{transform\'{e}e stricte} de $S$.
D\'{e}signons par $\comp(\mc D)$ l'ensemble de composantes irr\'{e}ductibles de $\mc D$ et par $\sing(\mc D)$ l'ensemble des points singuliers de $\mc  D$. Notons encore
$S(D):=D\cap\sing(\mc D)$.
Deux composantes $D, D'\in \comp(\mc D)$ sont dites \emph{adjacentes} si $D\neq D'$ et leur intersection $D\cap D'\neq\emptyset$; dans ce cas $D\cap D'=\{s\}\subset\sing(\mc D)$.
Nous consid\'{e}rons aussi une seconde courbe analytique $S'\ni 0$, ainsi qu'une boule de Milnor  $\mb B':=\mb B_{r'_0}$ pour $S'$; $E':\mc B'\to \mb B'$, $\mc D'$, $\mc E'$, $\mc S'$ d\'{e}signent respectivement l'application de r\'{e}duction, le diviseur total, le diviseur exceptionnel et la transform\'{e}e stricte de $S'$.
Dans tout l'article, nous adoptons les notations suivantes :
\begin{equation}\label{notationstar}
    A^{\ast} := (A\setminus S),\quad \mc A^*:=(\mc A\setminus\mc D)\,,\quad\hbox{pour}\quad A\subset \mb B\quad\hbox{et}\quad\mc A\subset\mc B\,.
\end{equation}
De m\^{e}me, si $A'\subset \mb B'$ et $\mc A'\subset \mc B'$, nous notons
$A'{}^\ast:= (A'\setminus S')$ et $\mc A'{}^\ast:= (\mc A'\setminus \mc D')$.\\

Donnons nous
pour chaque point singulier  $s\in \mr{Sing}(\mc D)$,
un syst\`{e}me de coordonn\'{e}es locales holomorphes
$(x_{s},y_{s}):\Omega_{s}\iso\mb D_{1}\times\mb D_{1}$ d\'{e}finies sur voisinage ferm\'{e} $\Omega _s$ de $s$ dans $\mc B$, \`{a} valeurs sur le polydisque ferm\'{e} $\mb D_1\times\mb D_1$, telles que $\mc D\cap \Omega_{s}=\{x_{s}y_{s}=0\}$ et $\Omega _s\cap \Omega _{s'}=\emptyset$, pour $s\neq s'$, avec $\mb D_{\varepsilon}:=\{|z|\leq\varepsilon  \}\subset \mb C$. Fixons aussi  pour chaque composante irr\'{e}ductible $D\in \comp (\mc D)$, une  fibration localement triviale en disques ferm\'{e}s donn\'{e}e par une submersion diff\'{e}rentiable $\rho_{D}:\Omega_{D}\to D$, d\'{e}finie  sur un voisinage ferm\'{e} $\Omega _D$ de $D$ dans $\mc B$. Adoptons les notations suivantes, $D$ et $D'$ d\'{e}signant encore des composantes irr\'{e}ductibles de $\mc D$ :
\begin{equation}\label{disqueslocaux}
D_s:=D\cap \Omega _s\,, \quad \hbox{pour}\quad
s\in  S(D):={\rm Sing}(\mc D)\cap D\,,
\end{equation}
et :
\begin{equation}\label{KD}
K_{D}:=\left(D\setminus\bigcup\limits_{s\in S(D)}\inte{D}_{s}\right),\quad K_{s}:=D_{s}\cup D'_{s}\,,\quad\textrm{si}\quad D\cap D'=\{s\}\,.
\end{equation}
Nous noterons aussi pour tout $X\subset\mc B$, tout  $K\subset D$ non-r\'{e}duit \`{a} point singulier  et  tout $s\in {\rm Sing}(\mc D)$,
\begin{equation}\label{XK}
X(K):=X\cap\rho_{D}^{-1}(K)\quad\hbox{et}\quad
X_{s}:=X\cap\Omega_{s}=:X(K_{s})\,.
\end{equation}
\begin{defin}\label{systlocal}
Nous dirons que la  collection $\mc L :=\left((x_s,y_s), \rho _D\right)_{s,\,D}$ est un \emph{syst\`{e}me local de $S$ sur $\mc B$}, s'il satisfait de plus les propri\'{e}t\'{e}s suivantes, pour tout $D\in \comp(\mc D)$ et  $s\in \mr{Sing}(\mc D)$ :
\begin{enumerate}[(i)]
 \item\label{tranfibMiln} la restriction de $\rho _D$ \`{a} $D$ est l'identit\'{e};
 \item si $D\subset\mc E$, alors $\rho _D$ est holomorphe sur $\rho _D^{-1}(K_D)$;
\item si $D\subset \mc S$ et $m\in D\cap \partial\mc B$, alors $\rho _D^{-1}(m)\subset \partial\mc B$.
 \item  si $z:=x_s$ ou $y_s$ d\'{e}signe la coordonn\'{e}e locale non-identiquement nulle sur $D$, alors
$z\circ\rho_{D}(m)=z(m)$, pour $|z(m)|\leq 1/2$, $m\in \Omega _D\cap\Omega _s$.
\end{enumerate}
La fibration $\rho _D$ sera appel\'{e}e ici \emph{fibration de Hopf de base \`{a} $D$}.
\end{defin}
Notons qu'alors $\rho _D$ est holomorphe au voisinage de chaque point singulier de $\mc D$ et que les composantes locales de $\mc D$ en ces points, sont des fibres de ces fibrations. Nous laissons au lecteur le soin de voir que, les cartes locales $(x_s,y_s)$ \'{e}tant donn\'{e}es, il existe des fibrations $\rho _D$ telles que $\mc L$ soit un syst\`{e}me local de $S$ sur $\mc B$.\\

\subsection{Tubes de Milnor et hom\'{e}omorphismes excellents}\label{ssectubesMilnor}

Fixons maintenant une \'{e}quation holomorphe r\'{e}duite $f$ de $S$, d\'{e}finie  sur un voisinage ouvert  de $\mb B$.
Pour tout r\'{e}el $\eta>0$, notons :
\begin{equation}\label{tubeMilnor}
T_{\eta}:=f^{-1}(\mb D_{\eta})\cap \mb B\quad \hbox{et}\quad
\mc T_{\eta}:=E^{-1}(\mc T_{\eta})\subset \mc B\,.
\end{equation}
\noindent Lorsque $\eta>0$ est assez petit, la restriction de $f$ \`{a} $T_{\eta}^*$ est une  fibration $\mc C^{\infty}$-localement triviale au dessus de $\mb D_{\eta}\setminus\{0\}$; nous dirons alors que $T_{\eta}$ et $\mc T_{\eta}$ sont des \emph{4-tubes de Milnor pour} $S$. Apr\`{e}s avoir fix\'{e}  une \'{e}quation holomorphe r\'{e}duite  de $S'$  au voisinage de $\mb B'$, nous d\'{e}finissons de la m\^{e}me mani\`{e}re les  4-tubes de Milnor pour $S'$ et les notons $T'_{\eta'}\subset \mb B'$, $\mc T'_{\eta'}\subset \mc B'$.

\begin{obs}
Si $T_{\eta}\subset \mb B$ est un 4-tube de Milnor et
$\mb B_{\varepsilon}$ est une boule ferm\'{e}e contenue dans $\inte{T}_{\eta}$, alors les inclusions $\mb B_{\varepsilon}^*\subset T_{\eta}^*\subset\mb B^*$ induisent des
isomorphismes au niveau des groupes fondamentaux.
\end{obs}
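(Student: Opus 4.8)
The maps in question sit in a commutative triangle $\mb B_{\varepsilon}^{*}\hookrightarrow T_{\eta}^{*}\hookrightarrow\mb B^{*}$, so by the two-out-of-three property of isomorphisms of groups it is enough to prove that the composite $\mb B_{\varepsilon}^{*}\hookrightarrow\mb B^{*}$ and the second arrow $T_{\eta}^{*}\hookrightarrow\mb B^{*}$ are $\pi_{1}$-isomorphisms; the first arrow $\mb B_{\varepsilon}^{*}\hookrightarrow T_{\eta}^{*}$ then follows. For $\mb B_{\varepsilon}^{*}\hookrightarrow\mb B^{*}$ I would use the conical structure of the Milnor ball: since the spheres $\partial\mb B_{r}$, $0<r\le r_{0}$, are transverse to $S\setminus\{0\}$, integrating a radial vector field tangent to $S$ yields a homeomorphism $\mb B\setminus\{0\}\cong(\partial\mb B)\times(0,1]$ taking $S\setminus\{0\}$ to $(S\cap\partial\mb B)\times(0,1]$ and $\mb B_{\varepsilon}\setminus\{0\}$ to $(\partial\mb B)\times(0,\varepsilon/r_{0}]$; hence $\mb B^{*}$ and $\mb B_{\varepsilon}^{*}$ get identified with $(\partial\mb B\setminus S)\times(0,1]$ and $(\partial\mb B\setminus S)\times(0,\varepsilon/r_{0}]$, the inclusion being a deformation retract, a fortiori a $\pi_{1}$-isomorphism.

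For $T_{\eta}^{*}\hookrightarrow\mb B^{*}$ the plan is to realise both sides as fibrations over the circle through Milnor's construction and compare them. The conical structure just used shows that $\mb B^{*}$ deformation retracts onto $\partial\mb B^{*}=\partial\mb B\setminus S$, on which $f/|f|$ is the Milnor fibration over $\mb S^{1}$, with fibre the Milnor fibre $F$. On the other side, by the very definition of a $4$-tube of Milnor, $f\colon T_{\eta}^{*}\to\mb D_{\eta}\setminus\{0\}$ is a locally trivial fibration; composing with the retraction $w\mapsto w/|w|$ of $\mb D_{\eta}\setminus\{0\}$ onto $\mb S^{1}$ turns $T_{\eta}^{*}$ into a fibration over $\mb S^{1}$ whose fibre is $f^{-1}((0,\eta])\cap\mb B$, which retracts onto the closed Milnor fibre $f^{-1}(\eta)\cap\mb B$ and hence is homotopy equivalent to $F$. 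One checks that the inclusion $T_{\eta}^{*}\hookrightarrow\mb B^{*}$, followed by the retraction onto $\partial\mb B^{*}$, is a morphism of fibrations over $\mb S^{1}$ inducing the identity on $\pi_{1}(\mb S^{1})=\Z$ and restricting on fibres to the classical comparison between the two standard forms of the Milnor fibre, which is a homotopy equivalence. The five lemma applied to the exact sequences $1\to\pi_{1}(F)\to\pi_{1}(\cdot)\to\Z\to\pi_{0}(F)=1$ then gives the desired $\pi_{1}$-isomorphism (and since $\mb B^{*}$ is a $K(\pi,1)$, a genuine homotopy equivalence).

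The delicate point is precisely that the fibre inclusion $f^{-1}(\eta)\cap\mb B\hookrightarrow\{\,z\in\mb B:f(z)\in\mb R_{>0}\}$ be a homotopy equivalence, i.e. that the closed Milnor fibre in the tube and the Milnor fibre on the sphere carry the same homotopy type; this is the heart of Milnor's fibration theory, and it uses exactly that $\mb B$ is a Milnor ball (transversality of $S$ to all the spheres) and that $T_{\eta}$ is a $4$-tube (transversality of the fibres $f^{-1}(w)$, $0<|w|\le\eta$, to $\partial\mb B$, which is what makes $f$ locally trivial up to the boundary on $T_{\eta}^{*}$). A more self-contained alternative for this part: after shrinking $r_{0}$ so that $f$ has no critical point on $\mb B\setminus\{0\}$ — possible since a reduced germ of plane curve has an isolated critical point — build directly a deformation retraction of $\mb B^{*}$ onto $T_{\eta}^{*}$ by flowing along a vector field which is a suitable multiple of $-\nabla|f|^{2}$ on $\{|f|\ge\eta\}$, is modified near $\partial\mb B$ so as to point inwards (possible because, $\mb B$ being a small Milnor ball, $|f|$ increases along the rays through boundary points where $|f|\ge\eta$, so the inward radial direction decreases $|f|$ there), and is tapered to zero along $\{|f|=\eta\}$ and on $T_{\eta}^{*}$; its flow is complete, preserves $\mb B$, and carries $\mb B^{*}$ onto $T_{\eta}^{*}$. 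In both approaches the only real work is the control of the geometry near $\partial\mb B$, and that is exactly what the hypotheses ``Milnor ball'' and ``$4$-tube of Milnor'' are designed to supply.
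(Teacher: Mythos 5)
Votre argument est correct et suit pour l'essentiel la voie que le papier laisse implicite : la structure conique r\`{e}gle l'inclusion $\mb B_{\varepsilon}^{*}\subset\mb B^{*}$, et la r\'{e}traction par d\'{e}formation de $\mb B^{*}$ sur $T_{\eta}^{*}$ --- votre seconde variante, par champ de vecteurs --- est exactement le fait que le papier importe de Milnor. Seule petite imprecision dans la premi\`{e}re variante : la r\'{e}traction radiale compos\'{e}e avec l'inclusion n'est pas litt\'{e}ralement un morphisme de fibrations au-dessus de $\mb S^{1}$ (l'argument de $f$ n'est pas constant le long des rayons du c\^{o}ne), mais vous identifiez correctement ce point comme le th\'{e}or\`{e}me d'\'{e}quivalence des deux fibrations de Milnor et vous fournissez l'argument de champ de vecteurs en substitut, ce qui rend la preuve compl\`{e}te.
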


 Le syst\`{e}me local $\mc L$ \'{e}tant fix\'{e}, nous pouvons pr\'{e}ciser la topologie des 4-tubes de Milnor. Pour $\eta_0>0$ assez petit, on construit classiquement\footnote{
Par transversalit\'{e}, un  champ $\mc X_D$ poss\'{e}dant ces propri\'{e}t\'{e}s existe sur un voisinage ouvert $W_D$ de chaque $K_D$; sur  $\Omega _s$, $s\in {\rm Sing}(\mc D)$, l'existence de tels champs $\mc X_s$ r\'{e}sulte de l'homog\'{e}n\'{e}it\'{e}  de la fonction $f\circ E$. Tous ces champs se recollent en utilisant une partition de l'unit\'{e}  form\'{e}e de  fonctions   $u_D : W_D\to \mb R$ \'{e}gales \`{a} $1$ sur $\mc T_{\eta_0}(K_D)$ et $u _s : \Omega _s\to \mb R$ nulles sur $\Omega _s \cap( \cup_{D}\mc T_{\eta_0}(K_D))$, cf. \cite{Wall}.
}
un champ de vecteurs diff\'{e}ren\-tiable $\mc X$ sur  $\mc T_{\eta_0}$ s'annulant sur $\mc D$ qui, pour toute composante $D$ de $\mc D$, est tangent aux fibres de $\rho _D$ en tout point de $\mc T_{\eta_0}(K_D)$ et tel que $\mc X\cdot(f\circ E)= f\circ E$. Ce champ se projette via $E$ en un champ lipchitzien $X$ sur $T_{\eta_0}$ qui est tangent \`{a} $S$ et nul \`{a} l'origine. Les  flots de ces champs sont d\'{e}finis pour tous les temps n\'{e}gatifs. \\

 Consid\'{e}rons les 3-vari\'{e}t\'{e}s \`{a} bord suivantes, que nous appelons \emph{3-tubes de Milnor},
  \begin{equation}\label{Meta}
  M_{\eta}:=f^{-1}(\partial\mb D_{\eta})\cap\mb B\subset \partial T_{\eta}\quad \hbox{et} \quad \mc M_{\eta}:=E^{-1}(M_{\eta})\subset \partial \mc T_{\eta}\,.
\end{equation}
\`{A} l'aide des flots de $\mc X$ et de $X$, on construit facilement une r\'{e}traction par d\'{e}formation de $T_{\eta_0}^\ast$ sur $M_{\eta_0}$ -et donc aussi une r\'{e}traction par d\'{e}formation de $\mc T_{\eta_0}^\ast$ sur $\mc M_{\eta_0}$. Les propri\'{e}t\'{e}s de tangence de ces flots permettent d'\^{e}tre plus pr\'{e}cis.

\begin{prop}\label{retractT} Il existe un diff\'{e}omorphisme   $\Theta :\mc M_{\eta_0}\times ]0,\eta_0]\iso  \mc T_{\eta_0}^\ast$ tel que :
$$\Theta(\mc M_{\eta_0}\times \{\eta\}) =\mc M_{\eta}\,,\quad \Theta(\partial \mc M_{\eta_0}\times ]0,1])= \mc T_{\eta_0}'^\ast\cap \partial \mc B'\,,\quad \Theta(m,\eta_0)=m\,,$$
pour tout   $m\in \mc M_{\eta_0}$ et $0<\eta\leq\eta_0$.
De plus $D\in \comp(\mc D)$, $$\Theta(\mc M_{\eta_0}(K_D)\times ]0,\eta_0]) =\mc T_{\eta_0}^{\ast}(K_D)\,,$$
et la restriction de $\Theta$ \`{a}  $\mc M_{\eta_0}(K_D)\times ]0,\eta_0]$, se prolonge en une application diff\'{e}rentiable $\Theta_D :\mc M_{\eta_0}(K_D)\times[0,\eta_{0}] \to \mc T_{\eta_{0}}(K_D)$ v\'{e}rifiant :
$$
 \rho _D\circ \Theta_D (m,s)= \rho _{D}(m)\,,\quad \Theta_D(m, 0)=\rho _D(m)\in K_D\,.$$

\end{prop}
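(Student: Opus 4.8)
L'idée est de construire $\Theta$ en intégrant le flot du champ $\mc X$ introduit ci-dessus, puis de vérifier les propriétés de compatibilité énoncées. Notons $\varphi_t$ le flot de $\mc X$ (défini pour $t\le 0$). Comme $\mc X\cdot(f\circ E)=f\circ E$, on a $f\circ E\circ\varphi_t=e^t\,(f\circ E)$, donc $\varphi_t$ envoie $\mc M_{\eta_0}$ sur $\mc M_{e^t\eta_0}$. On pose alors, pour $m\in\mc M_{\eta_0}$ et $\eta\in\,]0,\eta_0]$,
\begin{equation*}
\Theta(m,\eta):=\varphi_{\log(\eta/\eta_0)}(m)\,.
\end{equation*}
C'est un difféomorphisme de $\mc M_{\eta_0}\times\,]0,\eta_0]$ sur $\mc T_{\eta_0}^\ast$ : l'inverse est $p\mapsto(\varphi_{-\log(\qU(p)/\eta_0)}(p),\,\qU(p))$ où $\qU:=|f\circ E|$, en utilisant que $\mc T_{\eta_0}^\ast$ se rétracte sur $\mc M_{\eta_0}$ le long des orbites de $\mc X$ (fait rappelé juste avant l'énoncé). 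Les trois premières égalités sont alors immédiates : $\Theta(\mc M_{\eta_0}\times\{\eta\})=\mc M_\eta$ par la relation $f\circ E\circ\varphi_t=e^t(f\circ E)$ ; $\Theta(m,\eta_0)=\varphi_0(m)=m$ ; et l'invariance de $\partial\mc B'$ — pardon, de $\partial\mc B$ — sous $\varphi_t$, qui résulte de la condition (\ref{tranfibMiln})(iii) du système local (les fibres de $\rho_D$ au-dessus de $D\cap\partial\mc B$ restent dans $\partial\mc B$, et $\mc X$ y est tangent au bord par construction), donne $\Theta(\partial\mc M_{\eta_0}\times\,]0,1])=\mc T_{\eta_0}^\ast\cap\partial\mc B$.

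Pour la partie fibrée, on utilise la propriété de tangence : sur $\mc T_{\eta_0}(K_D)$ le champ $\mc X$ est tangent aux fibres de $\rho_D$, donc $\rho_D\circ\varphi_t=\rho_D$ là où c'est défini. Il en résulte $\Theta(\mc M_{\eta_0}(K_D)\times\,]0,\eta_0])=\mc T_{\eta_0}^\ast(K_D)$ et $\rho_D\circ\Theta(m,\eta)=\rho_D(m)$ sur ce morceau. Il reste à prolonger en $\eta=0$. Sur $\rho_D^{-1}(K_D)$, quitte à rétrécir, on dispose de coordonnées adaptées dans lesquelles $\rho_D$ est une projection linéaire et $f\circ E$ est, fibre à fibre, un monôme en la coordonnée transverse (homogénéité de $f\circ E$ le long des fibres, via le champ $\mc X_s$ et (\ref{tranfibMiln})(iv)) ; le flot $\varphi_t$ y contracte radialement chaque fibre vers son centre $\rho_D(m)\in K_D$ à vitesse exponentielle contrôlée, donc $\Theta$ s'étend continûment — et en fait différentiablement, car la contraction est conjuguée à une homothétie — par $\Theta_D(m,0):=\rho_D(m)$. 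On vérifie alors sans peine $\rho_D\circ\Theta_D(m,s)=\rho_D(m)$ pour tout $s\in[0,\eta_0]$ par continuité depuis $s>0$.

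Le point délicat sera le prolongement différentiable en $s=0$ et son recollement global. Localement au voisinage de $\mr{Sing}(\mc D)$, où $f\circ E=x_s^{a}y_s^{b}$ dans les cartes $(x_s,y_s)$, le flot de $\mc X$ s'écrit explicitement et le prolongement se lit directement ; mais sur un $K_D$ générique la structure de $f\circ E$ le long d'une fibre de $\rho_D$ n'est pas exactement monomiale, et il faut invoquer que $\rho_D$ est holomorphe sur $\rho_D^{-1}(K_D)$ (condition (\ref{tranfibMiln})(ii)) et que la fibre générique rencontre $\mc D$ transversalement en un unique point, de sorte que $f\circ E$ y a un zéro d'ordre $1$ ; la contraction du flot vers ce point est alors $\mc C^\infty$-linéarisable transversalement, uniformément en le paramètre de base. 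Le recollement entre les morceaux $\mc T_{\eta_0}(K_D)$ (où l'on impose la tangence aux fibres) et les morceaux $\mc T_{\eta_0}(K_s)$ (où l'on impose l'holomorphie) est assuré par le fait que le champ $\mc X$ global a été construit précisément par une partition de l'unité subordonnée à ces régions, avec les fonctions $u_D$ valant $1$ près de $K_D$ et $u_s$ nulles là où les $\rho_D$ dominent ; sur les zones de transition les deux prescriptions coïncident grâce à (\ref{tranfibMiln})(iv), qui force $\rho_D$ à préserver la coordonnée non nulle sur $D$, donc à être compatible avec la structure de la carte $(x_s,y_s)$.
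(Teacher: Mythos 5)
Your construction is the one the paper intends: Proposition \ref{retractT} is stated without an explicit proof, immediately after the construction of the vector field $\mc X$, and reparametrizing its flow by $\eta\mapsto\log(\eta/\eta_0)$, then exploiting $\mc X\cdot(f\circ E)=f\circ E$ and the tangency of $\mc X$ to the fibres of $\rho_D$ over $\mc T_{\eta_0}(K_D)$, is exactly what the sentence preceding the statement alludes to. The three displayed equalities, the invertibility via the forward flow, the correction of the misprint $\partial\mc B'\to\partial\mc B$, and the invariance of the pieces $\mc T_{\eta_0}^\ast(K_D)$ are all correctly handled.

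There is, however, one genuine error in your treatment of the extension to $\eta=0$. You assert that, because a generic fibre of $\rho_D$ meets $\mc D$ transversally at a single point, $f\circ E$ has a zero of order $1$ along that fibre. This is false: $f\circ E$ vanishes along $D$ with multiplicity $\nu_D(f\circ E)$ (the integer computed in Corollary \ref{homologie}), so its restriction to a transversal fibre vanishes to that order, which is $\geq 2$ for most exceptional components (already for the first blow-up of a singular germ, the multiplicity equals $\mathrm{mult}_0(f)\geq 2$). Consequently the flow contracts a fibre at rate $e^{t/\nu_D}$ rather than $e^t$, and the fibre coordinate of $\Theta_D(m,\eta)$ behaves like $(\eta/\eta_0)^{1/\nu_D}\,w(m)$. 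The extension by $\Theta_D(m,0)=\rho_D(m)$ is therefore continuous (H\"older of exponent $1/\nu_D$ in $\eta$), which is all the rest of the paper actually uses, but your justification of differentiability at $\eta=0$ --- that the contraction is conjugate to a homothety --- does not go through as stated: $\partial\Theta_D/\partial\eta$ blows up at $\eta=0$ as soon as $\nu_D\geq 2$ and $w(m)\neq 0$. To salvage a smooth statement one must either reparametrize the interval factor (e.g. by $\eta^{1/\nu_D}$, component by component) or settle for smoothness on $\{\eta>0\}$ together with continuity up to $\eta=0$; as written, both your argument and, strictly speaking, the proposition's wording overstate the regularity at the divisor.
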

\noindent Ce diff\'{e}omorphisme se redescend en un diff\'{e}omorphisme
\begin{equation}\label{strproduit}
\Theta^\flat :  M_{\eta_0}\times ]0,\eta_0]\iso   T_{\eta_0}^\ast
\end{equation}
qui induit une r\'{e}traction par d\'{e}formation de $(T_{\eta_0}^\ast, T_{\eta_0}^\ast\cap\partial \mb B )$ sur $(M_{\eta_0}, \partial M_{\eta_0})$. Comme $T_{\eta_0}^\ast$ est un r\'{e}tract par d\'{e}formation de $\mb B^{\ast}$, cf.  \cite{Milnor} , $M_{\eta_0}$ est aussi un r\'{e}tract par d\'{e}formation de $\mb B^{\ast}$.
Ainsi pour $\eta$ assez petit,
les restrictions de $\rho_{D}$ \`{a} $\mc T_{\eta}(K_{D})$, sont aussi des fibrations  en disques.
\begin{obs}\label{EilenMacLane}  Les inclusions $\mb B_\varepsilon^\ast\subset T_{\eta}^\ast\subset \mb B^{\ast}$, lorsqu'elles ont lieu, induisent\footnote{Cela se voit facilement en utilisant par exemple la structure conique \cite{Milnor} du couple $(\mb B, S)$.} des isomorphismes au niveau du groupe fondamental. Comme, toujours d'apr\`{e}s \cite{Milnor}, $M_{\eta}$ fibre (par $f$) sur le cercle $\partial \mb D_\eta$, la suite exacte d'homotopie de cette fibration montre que  $M_{\eta}$ est un espace de Eilenberg-MacLane $K(\pi ,1)$.  Il en est de m\^{e}me de  $T_{\eta}^\ast$  et $\mb B^{\ast}$, qui se r\'{e}tractent sur $M_{\eta}$, ainsi que de  $\mc T_{\eta}^\ast$, $\mc B^\ast$ et  $\mc M_{\eta}$ qui leurs sont hom\'{e}omorphes.
\end{obs}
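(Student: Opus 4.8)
Le plan est de traiter successivement les trois assertions, en combinant la structure conique de Milnor, la fibration de Milnor et les r\'{e}tractions par d\'{e}formation d\'{e}j\`{a} construites ci-dessus.

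Pour les isomorphismes de groupes fondamentaux, on utilise la structure conique du couple $(\mb B,S)$ \cite{Milnor} : $\mb B^\ast$ se r\'{e}tracte par d\'{e}formation sur le compl\'{e}mentaire de l'entrelacs $\partial\mb B\setminus S$, et la m\^{e}me construction pour la boule de Milnor $\mb B_\varepsilon$ montre que $\mb B_\varepsilon^\ast\hookrightarrow\mb B^\ast$ est une \'{e}quivalence d'homotopie (comme d\'{e}j\`{a} signal\'{e}). Par ailleurs, le diff\'{e}omorphisme $\Theta^\flat$ de~\eqref{strproduit} et \cite{Milnor} montrent que $T_\eta^\ast$ et $\mb B^\ast$ se r\'{e}tractent par d\'{e}formation sur $M_\eta$, donc que $T_\eta^\ast\hookrightarrow\mb B^\ast$ induit un isomorphisme sur $\pi_1$. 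Dans le triangle commutatif $\mb B_\varepsilon^\ast\hookrightarrow T_\eta^\ast\hookrightarrow\mb B^\ast$, deux des fl\`{e}ches \'{e}tant des isomorphismes sur $\pi_1$, la troisi\`{e}me l'est aussi.

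Pour montrer que $M_\eta$ est un $K(\pi,1)$, on part de la fibre de Milnor $F:=f^{-1}(t)\cap\mb B$, $t\in\partial\mb D_\eta$. Comme $S\cap\partial\mb B\neq\emptyset$ et que $\mb B$ est une boule de Milnor, le bord $\partial F=f^{-1}(t)\cap\partial\mb B$ est non vide pour $\eta$ assez petit; $F$ est donc une surface compacte \`{a} bord non vide, elle a le type d'homotopie d'un bouquet fini de cercles, et en particulier $\pi_n(F)=0$ pour $n\geq 2$. La restriction de $f$ \`{a} $M_\eta$ \'{e}tant, d'apr\`{e}s \cite{Milnor}, une fibration localement triviale $M_\eta\to\partial\mb D_\eta$ de fibre $F$, la suite exacte longue d'homotopie
\[
\cdots\longrightarrow\pi_{n+1}(\partial\mb D_\eta)\longrightarrow\pi_n(F)\longrightarrow\pi_n(M_\eta)\longrightarrow\pi_n(\partial\mb D_\eta)\longrightarrow\cdots
\]
et l'annulation $\pi_k(\partial\mb D_\eta)=0$ pour $k\geq 2$ donnent $\pi_n(M_\eta)=0$ pour tout $n\geq 2$. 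Comme $M_\eta$ est connexe (r\'{e}tract de $\mb B^\ast$), c'est un espace d'Eilenberg-MacLane $K(\pi,1)$.

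Enfin, $T_\eta^\ast$ et $\mb B^\ast$ se r\'{e}tractant par d\'{e}formation sur $M_\eta$, ce sont aussi des $K(\pi,1)$; et $E$ induisant un biholomorphisme $\mc B\setminus\mc E\iso\mb B\setminus\{0\}$ qui envoie $\mc D\setminus\mc E$ sur $S\setminus\{0\}$, on obtient des hom\'{e}omorphismes $\mc B^\ast\cong\mb B^\ast$, $\mc T_\eta^\ast\cong T_\eta^\ast$ et $\mc M_\eta\cong M_\eta$, qui sont donc \'{e}galement des $K(\pi,1)$. Aucune \'{e}tape n'est vraiment d\'{e}licate; le seul point r\'{e}clamant un peu de soin est de v\'{e}rifier que la fibre de Milnor $F$ est bien une surface \`{a} bord \emph{non vide}, ce qui en assure l'asph\'{e}ricit\'{e}, le reste n'\'{e}tant que l'assemblage de r\'{e}sultats d\'{e}j\`{a} cit\'{e}s (structure conique, fibration de Milnor, Proposition~\ref{retractT}).
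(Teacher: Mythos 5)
Votre d\'{e}monstration est correcte et suit exactement la ligne esquiss\'{e}e par la remarque elle-m\^{e}me et sa note de bas de page : structure conique pour les isomorphismes de $\pi_1$, fibration de Milnor de fibre une surface compacte \`{a} bord non vide (donc asph\'{e}rique) pour conclure que $M_\eta$ est un $K(\pi,1)$, puis transfert par r\'{e}tractions et par l'hom\'{e}omorphisme $E$ hors du diviseur. Le seul point que vous ajoutez explicitement — la non-vacuit\'{e} du bord de la fibre de Milnor, garantie par la transversalit\'{e} de $S\setminus\{0\}$ aux sph\`{e}res — est bien le d\'{e}tail qu'il fallait v\'{e}rifier.
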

Le syst\`{e}me local $\mc L$ pour $S$ sur $\mc B$ \'{e}tant toujours fix\'{e}, consid\'{e}rons aussi un syst\`{e}me local pour  $S'$ sur $\mc B'$,
$$\mc L':= \left((x'_{s'}, y'_{s'}): \Omega '_{s'}\to \mb D_1\times\mb D_1, \; \rho '_{D'}: \Omega '_{D'}\to D'\right)_{s',\, D'}\,.$$
 Nous conservons pour $\mc L'$ les notations  (\ref{disqueslocaux}), (\ref{KD}) et (\ref{XK}) utilis\'{e}es pour $\mc L$.
\begin{defin}\label{homeoexcellents}
Un hom\'{e}omorphisme $\Phi : T_{\eta} \to T_{\eta'}'$ entre deux 4-tubes de Milnor de $S$ et de $S'$, tel que $\Phi (S)=S'$, sera dit \emph{excellent pour  $\mc L$ et $\mc L'$}, s'il se rel\`{e}ve en un hom\'{e}omorphisme $\phi  : \mc T_{\eta} \to \mc T_{\eta'}'$,  $E'\circ \phi =\Phi \circ E$, satisfaisant les propri\'{e}t\'{e}s suivantes :
\begin{enumerate}[\hphantom{hh}(a)]
\item\label{prohol} $\phi$ est holomorphe au voisinage de chaque point singulier de $\mc D$;
\item\label{proprfibre} pour toute  composante irr\'{e}ductible $D$ de $\mc D$,
    on a l'\'{e}galit\'{e} : $\phi(\mc T_{\eta}(K_{D}))=\mc T_{\eta'}'(K'_{\phi (D)})$; de plus  sur ces ensembles
 $\phi$ conjugue les  fibrations $\rho_{D}$ et $\rho'_{\phi(D)}$, i.e. $\rho '_D(\phi(m))=\phi (\rho _D(m)) $, $m\in \mc T_{\eta}(K_D)$.
\end{enumerate}
\end{defin}

\subsection{Marquages entre germes de courbes}\label{marcquage_de_courbe} Classiquement la structure conique de $\mb B^\ast$, qui sera pr\'{e}cis\'{e}e en (\ref{reductrois}), induit une r\'{e}traction par d\'{e}forma\-tion de $\mb B^\ast$ sur toute boule ferm\'{e}e plus petite $\mb B_{\varepsilon}^\ast\subset {\mb B}^\ast$. Si $T_{\eta}\subset \mb B$ est un 4-tube de Milnor contenant
$\mb B_{\varepsilon}$, les inclusions $\mb B_{\varepsilon}^*\subset T_{\eta}^*\subset\mb B^*$ induisent donc des
isomorphismes au niveau des groupes fondamentaux. Toute application continue de l'un de ces ensembles vers $\mb B'{^{\ast}}$ d\'{e}finit ainsi un morphisme du groupe fondamental de $\mb B^{\ast}$ dans celui de $\mb B'{^{\ast}}$ . Plus pr\'{e}cis\'{e}ment consid\'{e}rons l'ensemble $\mf C(\mb B^\ast, \mb B'{}^\ast)$ des applications continues $F : U \to \mb B'$, o\`{u} $ U$ est un sous-ensemble connexe par arc de $\mb B^{\ast}$, tel que l'application d'inclusion $i_U :U\hookrightarrow \mb B^{\ast}$ induit un isomorphisme $i_{U\ast} :\pi _1(U ,\,p)\iso\pi _1(\mb B^*,\, p)$. Notons alors :
$$
    \underline{F}_{\ast} := F_\ast\circ(i_U)^{-1}: \pi _1(\mb B^{\ast}, p)\to \pi _1(\mb B'{}^{\ast}, F(p))\,.
$$
\begin{defin}\label{equivfond} Nous dirons que deux \'{e}l\'{e}ments $F : U\to \mb B'{^{\ast}}$ et $G : V\to \mb B'{^{\ast}}$ de $\mf C(\mb B^\ast, \mb B'{}^\ast)$ sont \emph{fondamentalement \'{e}quivalents} et nous noterons $F\asymp G$, si pour tout chemin $\alpha$ trac\'{e} dans $\mb B^*$ d'origine un point $p$ de $U$ et d'extr\'{e}mit\'{e} un point $q$ de $V$, il existe un chemin $\alpha'$ trac\'{e} dans $\mb B'^*$ d'origine $F(p)$ et d'extr\'{e}mit\'{e} $G(q)$, tel que
\begin{equation}\label{FG}
\alpha'_{*}\circ\underline{F}_{*}=\underline{G}_{*}\circ\alpha_{*},
\end{equation}
o\`{u} $\alpha_{*}:\pi_{1}(\mb B^*,p)\to\pi_{1}(\mb B^*,q)$ et $\alpha'_{*}:\pi_{1}(\mb B'^*,F(p))\to\pi_{1}(\mb B'^*,G(q))$ sont les isomorphismes  induits par $\alpha$ et $\alpha'$.
\end{defin}
\noindent On v\'{e}rifie facilement que $\asymp$ est bien une relation d'\'{e}quivalence dans $\mf C(\mb B^\ast, \mb B'{}^\ast)$ et que $F\asymp G$ d\`{e}s qu'il existe existe un couple de chemins $(\alpha , \alpha ')$ satisfaisant l'\'{e}galit\'{e} (\ref{FG}).
\begin{defin}
Une classe d'\'{e}quivalence $\mf f$ de $\asymp$ s'appellera \emph{marquage de $S'$ par $S$}, s'il existe un voisinage ouvert $U$ de l'origine dans $\mb B$ et un repr\'{e}sentant $\breve F : U^{\ast}\to \mb B'{^{\ast}}$ de $\mf f$, qui se prolonge en un hom\'{e}omorphisme $F : U\iso F(U)\subset \mb B'$ v\'{e}rifiant $F(S\cap U)= S'\cap F(U)$ et
pr\'{e}servant les orientations\footnote{Si $S=S'$ est donn\'{e}e par une \'{e}quation r\'{e}elle alors $F(x,y)=(\bar x,\bar y)$ pr\'{e}serve l'orientation de l'espace ambiante mais renverse l'orientation de $S$.} de $U,U'$ et $S,S'$ en tant que courbes complexes.
\end{defin}

\vspace{1em}

\begin{itemize}
\item[] \textsl{Dor\'{e}navant tous les hom\'{e}omorphismes conjuguant deux germes de courbes que nous consid\'{e}rons sont suppos\'{e}s pr\'{e}server l'orientation de l'espace ambiant et celles des courbes holomorphes.}
\end{itemize}

\vspace{2em}

Visiblement deux hom\'{e}omorphismes conjuguant $S$ \`{a} $S'$ sur des voisinages de l'origine d\'{e}finissent  le m\^{e}me marquage (de $S'$ par $S$) s'ils ont m\^{e}me germe \`{a} l'origine. Nous parlerons donc de \emph{germe d'hom\'{e}o\-morphisme repr\'{e}sentant un marquage}.
Comme d'apr\`{e}s (\ref{EilenMacLane}), $\mb B_{\epsilon}^*$ est un espace $K(\pi,1)$, un th\'{e}or\`{e}me classique\footnote{Cf. par exemple \cite{Whitehead} corollaire (4.4), page 226.} de topologie alg\'{e}brique donne la caract\'{e}risation  suivante.
\begin{prop}\label{fondequivhomot}
Deux germes d'hom\'{e}o\-morphismes au voisinage de  $0\in\mb C^2$ conjuguant les germes \`{a} l'origine de $S$ et de $S'$, repr\'{e}sentent le m\^{e}me marquage  si et seulement si pour $\varepsilon>0$ assez petit, ils induisent   des applications de  $\mb B_{\varepsilon}^*$  dans $\mb B'{^{\ast}}$, qui sont homotopes.
\end{prop}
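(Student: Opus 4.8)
The plan is to reduce the statement to the homotopy-theoretic characterization of maps into a $K(\pi,1)$ and then unwind the definition of $\asymp$. Let $H_0, H_1$ be two germs of homeomorphisms conjugating $(S,0)$ to $(S',0)$; choose a representative $h_j : U_j \to h_j(U_j)\subset \mb B'$ of each, with $U_j$ an open neighbourhood of $0$ in $\mb B$ and $h_j(S\cap U_j)=S'\cap h_j(U_j)$. For $\varepsilon>0$ small enough, $\mb B_\varepsilon\subset U_0\cap U_1$ and $h_j(\mb B_\varepsilon)$ contains a ball $\mb B'_{\varepsilon'}$; restricting gives continuous maps $\breve h_j : \mb B_\varepsilon^\ast \to \mb B'^\ast$ which are precisely elements of $\mf C(\mb B^\ast,\mb B'^\ast)$ (the inclusion $\mb B_\varepsilon^\ast\hookrightarrow \mb B^\ast$ is a $\pi_1$-isomorphism by the conical structure, cf. \cite{Milnor} and Remarque \ref{EilenMacLane}). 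By definition, the two germs represent the same marquage iff $\breve h_0 \asymp \breve h_1$.

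Next I would translate $\asymp$ into a statement about $\pi_1$. Pick base points $p\in\mb B_\varepsilon^\ast$ for $\breve h_0$ and, after shrinking, the same $p$ for $\breve h_1$; take the constant path $\alpha$ at $p$. Then $\breve h_0 \asymp \breve h_1$ holds iff there is a path $\alpha'$ in $\mb B'^\ast$ from $\breve h_0(p)$ to $\breve h_1(p)$ with $\alpha'_\ast\circ (\breve h_0)_\ast = (\breve h_1)_\ast$ on $\pi_1(\mb B_\varepsilon^\ast,p)$; here I use the remark following Definition \ref{equivfond} that a single good pair of paths suffices, and that $(i_{\mb B_\varepsilon^\ast})_\ast$ is an isomorphism so $\underline{(\breve h_j)}_\ast$ is just $(\breve h_j)_\ast$ up to the fixed identification. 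In other words, $\breve h_0\asymp \breve h_1$ iff $(\breve h_0)_\ast$ and $(\breve h_1)_\ast$ agree as homomorphisms $\pi_1(\mb B_\varepsilon^\ast,p)\to\pi_1(\mb B'^\ast,\cdot)$ \emph{up to the change-of-base-point automorphism} induced by a path joining their images, i.e. up to an inner automorphism of $\pi_1(\mb B'^\ast)$ (when one transports everything to a common base point).

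The final step invokes the classical obstruction-theoretic fact: since $\mb B'^\ast$ is a $K(\pi,1)$ (Remarque \ref{EilenMacLane}) and $\mb B_\varepsilon^\ast$ is a connected CW-complex (it has the homotopy type of the compact $3$-manifold $M_\eta$, cf. \S\ref{ssectubesMilnor}), two continuous maps $\mb B_\varepsilon^\ast\to\mb B'^\ast$ are freely homotopic if and only if the induced homomorphisms on $\pi_1$ are conjugate, i.e. coincide after composing one of them with an inner automorphism of $\pi_1(\mb B'^\ast)$; this is exactly \cite{Whitehead}, Corollary (4.4), p.~226. Combining this equivalence with the reformulation of $\asymp$ from the previous paragraph yields that $\breve h_0\asymp\breve h_1$ iff $\breve h_0$ and $\breve h_1$ are homotopic as maps $\mb B_\varepsilon^\ast\to \mb B'^\ast$, which is the assertion. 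The only point requiring a little care — and the main (minor) obstacle — is the bookkeeping of base points: one must check that the "path $\alpha'$" appearing in Definition \ref{equivfond} is precisely the data encoding the inner automorphism in Whitehead's statement, and that passing between the formulation with moving base points and the one with a single base point is harmless because all the spaces involved are path-connected and their fundamental groups are identified compatibly via the inclusions. Once this dictionary is set up, the proof is immediate.
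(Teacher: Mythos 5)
Your proof is correct and follows exactly the route the paper takes: the paper gives no argument beyond observing that $\mb B_\varepsilon^\ast$ is a $K(\pi,1)$ (Remarque \ref{EilenMacLane}) and citing \cite{Whitehead}, Corollary (4.4), p.~226, which is precisely the classification of free homotopy classes of maps into an aspherical space by conjugacy classes of $\pi_1$-homomorphisms that you invoke. Your unwinding of the definition of $\asymp$ and the base-point bookkeeping just makes explicit what the paper leaves to that citation.
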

\noindent Cela nous am\`{e}ne \`{a} poser la \\

\noindent{\bf Question. } Est-ce que deux germes d'hom\'{e}omorphismes $h_0$, $h_1 : (\mb C^2,0)\to (\mb C^2,0)$ tels que $h_{i}(S,0)=(S',0)$, $i=0,1$, d\'{e}finissent le m\^{e}me marquage si et seulement s'il existe une germe d'hom\'{e}omorphisme $H : (\mb C^3, I)\to (\mb C^3, I)$ le long du compact $I:= 0\times0\times[0,1]$, qui s'\'{e}crit $H(x,y, t)=(H_t(x,y),t)$, v\'{e}rifie $H_0=h_0$, $H_1=h_1$ et tel que les germes de $H(S\times[0,1])$ et de $S'\times[0,1]$ le long de $I$, soient \'{e}gaux?\\

\noindent Le r\'{e}sultat principal de ce travail est le th\'{e}or\`{e}me suivant.
\begin{teo}\label{teomarquagexcellent}
Soient  $\mc L=((x_s,y_s),\rho_{D})_{s,\,D}$ et $\mc L'=((x'_{s'},y'_{s'}),\rho'_{D'})_{s',\,D'}$ deux syst\`{e}mes locaux
de $S$ et $S'$ sur $\mc B$ et $\mc B'$ respectivement et
 $h:\mb B_{\varepsilon}\iso h(\mb B_{\varepsilon})\subset\mb B'$ un hom\'{e}omorphisme tel que $h(S\cap \mb B_{\varepsilon})=S'\cap h(\mb B_{\varepsilon}) $.
Alors il existe un hom\'{e}omorphisme $\Phi: T_{\eta}\iso T_{\eta'}'$, $\Phi (S)=S'$, qui est  excellent pour $\mc L$ et $\mc L'$, et tel que les restrictions $h_{|\mb B_{\varepsilon}^\ast}$ et  $\Phi _{|T_{\eta}^\ast} : T_{\eta}^\ast\to T'^{\ast}_{\eta'}$ sont fondamentalement \'{e}quivalentes.
\end{teo}

\noindent En d'autres termes, on obtient le th\'{e}or\`{e}me A de l'introduction :\\

\begin{itemize}\item[] \textit{Tout marquage de $S'$ par $S$ peut \^{e}tre repr\'{e}sent\'{e} par un hom\'{e}omorphisme excellent entre deux 4-tubes de Milnor.}
\end{itemize}

\section{Topologie des tubes de Milnor}\label{3}

Avant de commencer la preuve du th\'{e}or\`{e}me  \ref{teomarquagexcellent}, nous \'{e}tablissons dans cette section les  propri\'{e}t\'{e}s topologiques des tubes de Milnor que nous utiliserons par la suite.

\subsection{Groupe fondamental et homologie} Nous allons expliciter une pr\'{e}\-sen\-ta\-tion du groupe fondamental $\Gamma$ de $T_{\eta}^*$. Pour cela, rappelons que l'arbre dual $\mb A$ de la d\'{e}singularisation de $S$ est constitu\'{e} d'un sommet pour chaque \'{e}l\'{e}ment $D\in\comp(\mc D)$ et d'une ar\^{e}te joignant les sommets correspondants \`{a} $D$ et $D'$, pour chaque singularit\'{e} $s\in\sing(\mc D)$, point d'intersection de $D$ et $D'$.\\

Fixons un syst\`{e}me local $\mc L$ pour $S$  ainsi qu'une immersion topologique $j$ d'une r\'{e}alisation g\'{e}om\'{e}trique $|\mb A|$ de $\mb A$ dans $\mc T_{\eta}^*$, telle que :
\begin{itemize}
\item pour chaque $D\in\comp(\mc D)$, $j^{-1}(\mc T_{\eta}^*(K_{D}))$ contient un seul somment $\mathbf{s}_D$ de $\mb A$, qui est celui
    associ\'{e} \`{a} $D$ et de plus $\rho _D\circ j$ est un plongement sur un voisinage de $\mathbf{s}_D$;
\item pour chaque $s\in\sing(\mc D)$, $j^{-1}(\mc T_{\eta}^*(K_{s}))$ est contenu dans une seule ar\^{e}te, qui est celle associ\'{e}e \`{a} $s$. Nous supposons aussi, ce qui est toujours possible, que le point de coordonn\'{e}es $(x_{s},y_{s})=(1,1)$ appartient \`{a} $j(\mb A)$.
\end{itemize}
Comme $j(\mb A)$ est contractile, on a un isomorphisme canonique entre les groupes $\pi_{1}(\mc T_{\eta}^*,j(\mb A))$ et $\Gamma$ qu'on nous n'expliciterons pas.

\begin{defin}
Pour chaque $D\in\comp(\mc D)$ consid\'{e}rons l'\'{e}l\'{e}ment $\mf c_{D}\in\Gamma$ repr\'{e}sent\'{e} par le lacet simple $\rho_{D}^{-1}(\rho_{D}(j(\mathbf{s}_{D})))\cap \mc M_{\eta}$, orient\'{e} comme le bord d'une courbe holomorphe de $\mc T_{\eta}^*$.
\end{defin}

\begin{obs}
Soit $s\in\sing(\mc D)$ le point d'intersection de $D$ et $D'\in\comp(\mc D)$. Supposons que l'on a : $D\cap \Omega_{s}=\{x_{s}=0\}$ et $D'\cap\Omega_{s}=\{y_{s}=0\}$. Alors $\mf c_{D}$ et $\mf c_{D'}$
sont les classes d'homotopie des lacets
$(x_{s},y_{s})=(e^{2i\pi t},1)$ et $(x_{s},y_{s})=(1,e^{2i\pi t})$ respectivement.
\end{obs}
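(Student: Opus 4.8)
Esquissons la preuve. L'id\'{e}e est de ramener, au moyen de la fibration $\rho_D$, le lacet d\'{e}finissant $\mf c_D$ dans la carte bidisque $\Omega_s$, puis d'y lire sa classe.

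Rappelons d'abord que $\mf c_D$ est, par d\'{e}finition, un \emph{m\'{e}ridien positif de $D$}. Posons $p:=\rho_D(j(\mathbf{s}_D))\in K_D$ et notons $a\ge 1$ la multiplicit\'{e} de $D$ dans le diviseur de $f\circ E$. La fibre $\rho_D^{-1}(p)$ est un disque transverse \`{a} $D$ en $p$ ne rencontrant $\mc D$ qu'en $p$ ; dans une coordonn\'{e}e holomorphe $w$ centr\'{e}e en $p$ sur ce disque on a $f\circ E=w^{a}u(w)$ avec $u(0)\ne 0$, de sorte que $\rho_D^{-1}(p)\cap\mc M_\eta$ est un cercle entourant $p$ une seule fois et positivement lorsqu'on l'oriente comme bord d'un disque holomorphe, ainsi qu'il est prescrit. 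Comme $j(\mathbf{s}_D)$ appartient \`{a} l'anneau $\rho_D^{-1}(p)\cap\mc T_\eta^{*}$, qui se r\'{e}tracte sur ce cercle, $\mf c_D$ est pr\'{e}cis\'{e}ment la classe bas\'{e}e de ce m\'{e}ridien, le point base \'{e}tant choisi sur $j(\mb A)$ via $j(\mathbf{s}_D)$. Enfin, deux m\'{e}ridiens positifs de $D$ pris en des points de $D\setminus\sing(\mc D)$ joints par un chemin dans $D\setminus\sing(\mc D)$ sont librement homotopes dans $\mc T_\eta^{*}$ : on les d\'{e}forme l'un sur l'autre en glissant le long du fibr\'{e} en disques $\rho_D^{-1}(\cdot)\cap\mc T_\eta^{*}$, dont les fibres sont des anneaux orient\'{e}s de fa\c{c}on coh\'{e}rente.

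Travaillons maintenant dans $\Omega_s$. D'apr\`{e}s la propri\'{e}t\'{e}~(iv) de la d\'{e}finition~\ref{systlocal}, et puisque $\rho_D$ est \`{a} valeurs dans $D=\{x_s=0\}$, on a $\rho_D(x_s,y_s)=(0,y_s)$ d\`{e}s que $|y_s|\le 1/2$ ; comme de plus $\rho_D$ est holomorphe au voisinage de $s$, le prolongement analytique \'{e}tend cette \'{e}galit\'{e} \`{a} tout un voisinage de $s$ dans $\Omega_D\cap\Omega_s$. Ainsi, pour $0<c\le 1/2$ r\'{e}el, la fibre de $\rho_D$ au-dessus du point $(x_s,y_s)=(0,c)$ de $D$ est un disque holomorphe contenu dans $\{y_s=c\}$, dont le m\'{e}ridien-bord positivement orient\'{e} est $\{(x_s,y_s)=(\varepsilon e^{2i\pi t},c)\}$, $\varepsilon>0$ petit, parcouru par $t$ croissant. \`{A} l'int\'{e}rieur de $\Omega_s\setminus\mc D=(\mb D_1\setminus\{0\})\times(\mb D_1\setminus\{0\})$ on homotope alors $\varepsilon$ et $c$ vers $1$ en restant hors de $\{x_sy_s=0\}$, et l'on aboutit au lacet $(x_s,y_s)=(e^{2i\pi t},1)$ ; ce dernier passe par $(1,1)\in j(\mb A)$ et porte, en tant que bord du disque holomorphe $\mb D_1\times\{1\}$, la m\^{e}me orientation de m\'{e}ridien positif de $D$ que $\mf c_D$. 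Les inclusions $\mc M_\eta\hookrightarrow\mc T_\eta^{*}\hookrightarrow\mb B^{*}$ \'{e}tant des \'{e}quivalences d'homotopie (proposition~\ref{retractT} et remarque~\ref{EilenMacLane}), toutes ces homotopies ont lieu, aux points base pr\`{e}s, dans $\mc T_\eta^{*}$ ; elles montrent que $\mf c_D$ est la classe du lacet $(x_s,y_s)=(e^{2i\pi t},1)$ dans $\Gamma$. L'\'{e}change des r\^{o}les de $x_s$ et $y_s$, c'est-\`{a}-dire de $D$ et $D'$, donne l'assertion pour $\mf c_{D'}$.

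Le point d\'{e}licat, et \`{a} mon sens le v\'{e}ritable contenu de la preuve, est le suivi des points base : il faut v\'{e}rifier que le transport du m\'{e}ridien de $D$ depuis $p$ jusque dans $\Omega_s$, ainsi que les homotopies pr\'{e}c\'{e}dentes, s'effectuent relativement \`{a} un chemin trac\'{e} dans $j(\mb A)$ et joignant $j(\mathbf{s}_D)$ \`{a} $(1,1)$, c'est-\`{a}-dire que $\mf c_D$ et la classe du lacet $(x_s,y_s)=(e^{2i\pi t},1)$ ne sont pas seulement conjugu\'{e}s dans $\Gamma$, mais bien \'{e}gaux. C'est pr\'{e}cis\'{e}ment le r\^{o}le des deux normalisations impos\'{e}es \`{a} $j$ : que $\rho_D\circ j$ plonge un voisinage de $\mathbf{s}_D$ sur un arc de $D$ issu de $p$ et se dirigeant vers $s$, et que l'image par $j$ de l'ar\^{e}te de $\mb A$ associ\'{e}e \`{a} $s$ (celle joignant $\mathbf{s}_D$ \`{a} $\mathbf{s}_{D'}$) passe par le point $(1,1)$. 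On transporte alors le m\'{e}ridien d'abord le long de cet arc de $D$, puis le long de cette ar\^{e}te, les extr\'{e}mit\'{e}s du chemin de transport \'{e}tant $j(\mathbf{s}_D)$ et $(1,1)$, toutes deux situ\'{e}es sur $j(\mb A)$. Une fois ce point acquis, tout se ram\`{e}ne au calcul \'{e}l\'{e}mentaire dans le bidisque esquiss\'{e} ci-dessus.
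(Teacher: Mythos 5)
Le texte de l'article \'enonce cette remarque sans aucune d\'emonstration ; votre argument est pr\'ecis\'ement le d\'evissage des d\'efinitions que les auteurs laissent au lecteur (la propri\'et\'e (iv) du syst\`eme local identifie la fibre de $\rho_D$ pr\`es de $s$ \`a un disque $\{y_s=c\}$, la convention d'orientation ``bord d'une courbe holomorphe'' fixe le signe, puis une homotopie \'evidente dans $(\mb D_1\setminus\{0\})\times(\mb D_1\setminus\{0\})$ ram\`ene le m\'eridien sur $(e^{2i\pi t},1)$), et il est correct. Votre discussion du suivi des points base --- \`a savoir que les normalisations impos\'ees \`a $j$, en particulier $(1,1)\in j(\mb A)$ et la contractilit\'e de $j(\mb A)$, permettent de conclure \`a une \'egalit\'e dans $\pi_1(\mc T_\eta^*, j(\mb A))$ et non seulement \`a une conjugaison --- met le doigt sur le seul point r\'eellement d\'elicat, que l'article passe sous silence.
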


\begin{prop}
Le groupe fondamental $\Gamma$ admet comme syst\`{e}me de g\'{e}n\'{e}\-ra\-teurs $\{\mf c_{D}\}_{D\in\comp(\mc D)}$, avec les relations donn\'{e}es par les familles\footnote{Dans le produit $\prod\limits_{D'\in\comp(\mc D)}\mf c_{D'}^{(D',E)}$, l'ordre est celui donn\'{e} par l'ordre cyclique des ar\^{e}tes de la projection $\rho_{E}\circ j(\mr{star}(\mathbf{s}_{E}))$ de l'\'{e}toile de $\mathbf{s}_E$, dans la composante $E$, qui est de dimension r\'{e}elle deux.}

\begin{equation}\label{relgamma}\prod\limits_{D'\in\comp(\mc D)}\mf c_{D'}^{(D',E)}=1,\quad [\mf c_{D},\mf c_{E}]^{(D,E)}=1\end{equation}
d'indices $E\in\comp(\mc E)$ et  $D\in\comp(\mc D)$.
\end{prop}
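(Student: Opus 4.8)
The plan is to realize $\pi_1(\mathcal T_\eta^\ast)$ as the fundamental group of a graph of groups modeled on the dual tree $\mathbb A$, using the decomposition of $\mathcal T_\eta^\ast$ into the pieces $\mathcal T_\eta^\ast(K_D)$ (one per vertex $D$) glued along the pieces $\mathcal T_\eta^\ast(K_s)$ (one per edge $s$), and then apply the van Kampen theorem along the spine $j(\mathbb A)$. First I would identify the local models. Over $K_D$ the fibration $\rho_D$ exhibits $\mathcal T_\eta^\ast(K_D)$ as a disk bundle with the fibers over $S(D)$ and over $\mathcal S\cap D$ removed; hence it deformation retracts onto $K_D$ with a circle bundle boundary, and its fundamental group is generated by $\mathfrak c_D$ together with loops $\mathfrak c_{D'}$ around the punctures, subject to the single relation that the product of the boundary loops of the punctured sphere (in the cyclic order of the edges of $\mathrm{star}(\mathbf s_D)$, i.e. of the local branches at the singular points lying on $D$) times a suitable power of $\mathfrak c_D$ is trivial — this is exactly the first relation in (\ref{relgamma}) for $E=D$. (When $D\subset\mathcal E$ the exponent bookkeeping uses the self-intersection/multiplicity data encoded in $\mathcal L$; when $D\subset\mathcal S$ there is an extra free generator coming from the strict transform, which is the meridian of $S$ and does not produce a relation.) Over a singular point $s=D\cap D'$, the local model $\Omega_s\setminus\mathcal D=\{x_sy_s\neq0\}\subset\mathbb D_1\times\mathbb D_1$ has fundamental group $\mathbb Z^2$ freely generated by $(x_s,y_s)=(e^{2i\pi t},1)$ and $(1,e^{2i\pi t})$, which by the Remark preceding the statement are precisely $\mathfrak c_D$ and $\mathfrak c_{D'}$; in particular $[\mathfrak c_D,\mathfrak c_{D'}]=1$, which is the second relation in (\ref{relgamma}) for the pair $(D,D')$ (and it is vacuous, i.e. $(D,E)$-exponent zero, when $D,E$ are not adjacent).

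Next I would run van Kampen over the finite graph $j(\mathbb A)$. Since $j(\mathbb A)$ is contractible, choosing it as the common base point set there is no ambiguity of base points; the open sets are thickenings $U_D$ of $\rho_D\circ j$ of a neighborhood of $\mathbf s_D$ and thickenings $V_s$ of the edge associated to $s$, each $U_D$ homotopy equivalent to $\mathcal T_\eta^\ast(K_D)$ and each $V_s$ homotopy equivalent to $\mathcal T_\eta^\ast(K_s)$, with $U_D\cap V_s$ homotopy equivalent to the boundary circle $\rho_D^{-1}(\rho_D(j(\mathbf s_D)))\cap\mathcal M_\eta$, i.e. to $\langle\mathfrak c_D\rangle$. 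The point $(x_s,y_s)=(1,1)\in j(\mathbb A)$ guarantees that the two natural generators of $\pi_1(V_s)=\mathbb Z^2$ are carried into the two adjacent vertex groups as $\mathfrak c_D$ and $\mathfrak c_{D'}$ with no correcting conjugation. Van Kampen then gives $\Gamma$ as the quotient of the free product of the vertex groups $\pi_1(U_D)$ by the edge identifications: each $V_s$ glues $\langle\mathfrak c_D\rangle\subset\pi_1(U_D)$ to $\langle\mathfrak c_{D'}\rangle\subset\pi_1(U_{D'})$ and additionally imposes $[\mathfrak c_D,\mathfrak c_{D'}]=1$. Collecting the vertex presentations (the products (\ref{relgamma}) with index $E$) and the edge relations (the commutators (\ref{relgamma}) with index $(D,E)$), and using that $j(\mathbb A)$ is a tree so no extra generator or HNN stable letter appears, yields exactly the asserted presentation on $\{\mathfrak c_D\}_{D\in\comp(\mathcal D)}$.

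The main obstacle, and the step deserving the most care, is the correct computation of the vertex relation (\ref{relgamma}) for $E\in\comp(\mathcal E)$: one must show that $\mathcal T_\eta^\ast(K_E)$ really is (homotopy equivalent to) the complement in a $\mathbb D$-bundle over $E\cong\mathbb P^1$ of $v(E)$ fibers, that the boundary of a generic fiber is $\mathfrak c_E$ with the holomorphic orientation, and that walking around the base sphere picks up the ordered product $\prod_{D'}\mathfrak c_{D'}^{(D',E)}$ in the cyclic order of $\mathrm{star}(\mathbf s_E)$ projected into $E$ — this is where the local coordinates $(x_s,y_s)$ and the compatibility condition (iv) of Definition \ref{systlocal} are used to match $\mathfrak c_E$ at $\mathbf s_E$ with the corresponding $x_s$- or $y_s$-loop near each $s\in S(E)$, and where one checks that, in the abelianization, this relation reduces to the familiar relation among the classes dual to the components of $\mathcal D$ (so the presentation is consistent with $H_1(T_\eta^\ast)$ computed from the intersection matrix). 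The remaining verifications — that the $V_s$ are correctly modeled by $\{x_sy_s\neq0\}$, that the retraction of Proposition \ref{retractT} makes all the stated homotopy equivalences precise, and that $j$ can be chosen with the listed incidence properties — are routine given $\mathcal L$ and the structure product $\Theta$.
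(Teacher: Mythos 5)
Votre démonstration est correcte et suit exactement la voie que le papier indique (sans la détailler) : une application récurrente du théorème de Seifert--Van Kampen le long de l'arbre dual, avec les modèles locaux donnés par les fibrés en disques épointés au-dessus des $K_D$ et les polydisques $\{x_sy_s\neq0\}$ aux points singuliers, c'est-à-dire l'argument classique de Mumford et Eisenbud--Neumann auquel le texte renvoie. Vous explicitez en fait les détails que le papier omet, y compris le point délicat de l'ordre cyclique dans la relation de sommet et la cohérence avec $H_1$.
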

\noindent La preuve de cette proposition se fait en appliquant le th\'{e}or\`{e}me classique de Seifert-Van Kampen de fa\c{c}on r\'{e}currente, voir par exemple \cite{Mumford, Dimca, M-M}.\\

Nous \'{e}crirons les \'{e}l\'{e}ments de $\Gamma$ en notation multiplicative et ses classes dans $\Gamma/[\Gamma,\Gamma]\cong H_{1}(\mc T_{\eta}^*;\mb Z)$ en notation additive, mais en conservant les m\^{e}mes noms.

\begin{cor}\label{homologie}
Le groupe d'homologie $H_{1}(\mc T_{\eta}^*;\mb Z)$ est un groupe ab\'{e}lien libre de rang $r:=\#\comp(S)$, engendr\'{e} par les classes $\mf c_{\mc S_{j}}$ associ\'{e}s aux composantes irr\'{e}ductibles $\mc S_{1},\ldots,\mc S_{r}$ de $\mc S$. De plus, si l'on note $\{E_1,\ldots,E_n\}$ les composantes de $\mc E$ et respectivement
$\mf c_{\mc E}$ et $\mf c_{\mc S}$ les vecteurs transpos\'{e}s de $(\mf c_{E_{1}},\ldots,\mf c_{E_{n}})$ 
et de $(\mf c_{\mc S_{1}},\ldots,\mf c_{\mc S_{r}})$, alors
\begin{equation}\label{h1b}
\mf c_{\mc E}=-(\mc E,\mc E)^{-1}(\mc E,\mc S)\mf c_{\mc S},
\end{equation}
o\`{u} $(\mc E,\mc E)$ et $(\mc E,\mc S)$ d\'{e}signent les matrices ayant pour coordonn\'{e}es les nombres d'intersection $(E_{i},E_{j})$ et $(E_{i},\mc S_{k})$ respectivement. Enfin la composante $(i,k)$ de la matrice $-(\mc E,\mc E)^{-1}\cdot(\mc E,\mc S)$ est \'{e}gale \`{a} l'ordre d'annulation $\nu_{\ssstyle E_i}(f_k\circ E)$ de $f_{k}\circ E$ sur $E_{i}$, o\`{u} $f_{k}$ d\'{e}signe une \'{e}quation r\'{e}duite de $S_{k}$.
\end{cor}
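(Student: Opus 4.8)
The plan is to extract everything from the presentation of $\Gamma$ given in the preceding proposition by abelianizing the relations \eqref{relgamma}. First I would note that the bracket relations $[\mf c_D,\mf c_E]^{(D,E)}=1$ become trivial after abelianization, so $H_1(\mc T_\eta^*;\mb Z)$ is the abelian group generated by $\{\mf c_D\}_{D\in\comp(\mc D)}$ subject only to the additive relations $\sum_{D'\in\comp(\mc D)}(D',E)\,\mf c_{D'}=0$, one for each $E\in\comp(\mc E)$. Splitting $\comp(\mc D)=\comp(\mc E)\sqcup\{\mc S_1,\dots,\mc S_r\}$ and writing the intersection data in block form, this system of relations reads precisely $(\mc E,\mc E)\,\mf c_{\mc E}+(\mc E,\mc S)\,\mf c_{\mc S}=0$, where I use that $(E_i,E_j)$ for $E_i,E_j\in\comp(\mc E)$ and $(E_i,\mc S_k)$ are the coefficients $(D',E)$ appearing in \eqref{relgamma} (the self-intersection $(E_i,E_i)$ being the coefficient of $\mf c_{E_i}$ itself, which is why the relation for $E_i$ genuinely involves $\mf c_{E_i}$). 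Since the exceptional divisor of a resolution of a plane curve singularity has \emph{negative definite} intersection matrix (this is the classical fact going back to Mumford \cite{Mumford}), $(\mc E,\mc E)$ is invertible over $\mb Q$; moreover its determinant is $\pm1$ because $\mc E$ is the exceptional divisor of a composition of blow-ups of points, so $(\mc E,\mc E)$ is invertible \emph{over $\mb Z$}. Hence $\mf c_{\mc E}=-(\mc E,\mc E)^{-1}(\mc E,\mc S)\mf c_{\mc S}$, which is \eqref{h1b}, and the $\mf c_{E_i}$ are redundant generators: $H_1(\mc T_\eta^*;\mb Z)$ is freely generated over $\mb Z$ by $\mf c_{\mc S_1},\dots,\mf c_{\mc S_r}$, of rank $r=\#\comp(S)$. (Freeness: once the $\mf c_{E_i}$ are eliminated there are no surviving relations among the $\mf c_{\mc S_k}$, as the relation matrix in those coordinates is empty; alternatively invoke $H_1(\mc T_\eta^*)\cong H_1(\mb B^*)\cong\mb Z^r$ via \eqref{EilenMacLane} and the fact that the meridians of the $r$ branches generate.)

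It remains to identify the entries of $-(\mc E,\mc E)^{-1}(\mc E,\mc S)$ with the vanishing orders $\nu_{E_i}(f_k\circ E)$. Here I would argue divisor-theoretically: for a reduced equation $f_k$ of the branch $S_k$, the total transform satisfies $E^*(f_k)=\operatorname{div}(f_k\circ E)=\wt{S}_k+\sum_i \nu_{E_i}(f_k\circ E)\,E_i$, where $\wt S_k$ is the strict transform of $S_k$. The key input is that the total transform divisor has zero intersection number with every compact component $E_j$ of $\mc E$ — because $f_k\circ E$ is a global function on a neighborhood of $\mc E$ and $E_j$ is compact, so $(E^*(f_k)\cdot E_j)=0$. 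Writing this out: $(\wt S_k\cdot E_j)+\sum_i \nu_{E_i}(f_k\circ E)(E_i\cdot E_j)=0$, i.e. in matrix form $(\mc E,\mc S_k)+(\mc E,\mc E)\,\nu_k=0$ where $\nu_k=(\nu_{E_i}(f_k\circ E))_i$, which gives $\nu_k=-(\mc E,\mc E)^{-1}(\mc E,\mc S_k)$, exactly the $k$-th column of the claimed matrix. Summing over $k$, the vector $\mf c_{\mc E}$ is the combination of the $\mf c_{\mc S_k}$ with these coefficients, consistent with the geometric meaning of $\mf c_{E_i}$ as the meridian of $E_i$, whose class in $H_1$ is $\sum_k \nu_{E_i}(f_k\circ E)\,\mf c_{\mc S_k}$.

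I expect the only genuinely delicate point to be the \emph{integrality} of $(\mc E,\mc E)^{-1}$, i.e. that $\det(\mc E,\mc E)=\pm1$; this is where one uses that $\mc E$ comes from iterated point blow-ups (each blow-up changes the intersection form by an elementary operation of determinant $\pm1$, starting from the empty matrix), rather than merely negative definiteness, which would only give invertibility over $\mb Q$. The identification of generators of $H_1$ with meridians, and the compatibility of the two descriptions of $\mf c_{\mc E}$, are then bookkeeping, using that $\mf c_D$ was defined as the boundary of a small holomorphic transversal to $D$ and that such a transversal meets $S_k$ in exactly $\nu_{E_i}(f_k\circ E)$ points (counted with multiplicity/orientation) when $D=E_i$. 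The rank statement $r=\#\comp(S)$ and the freeness then follow formally, or alternatively from \eqref{EilenMacLane} identifying $\mc T_\eta^*$ with $\mb B^*$, whose first homology is well known to be free on the branch meridians.
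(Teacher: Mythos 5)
Your proposal is correct, and for the first two assertions it takes exactly the paper's route: abelianize the relations (\ref{relgamma}) to get $(\mc E,\mc E)\,\mf c_{\mc E}+(\mc E,\mc S)\,\mf c_{\mc S}=0$, then invert over $\mb Z$ using $\det(\mc E,\mc E)=\pm1$; your added remarks on freeness and on why unimodularity (not mere negative definiteness) is the needed input are consistent with what the paper leaves implicit.

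Where you genuinely diverge is the last assertion, the identification of the entries of $-(\mc E,\mc E)^{-1}(\mc E,\mc S)$ with the multiplicities $\nu_{E_i}(f_k\circ E)$. You argue divisor-theoretically: $\mathrm{div}(f_k\circ E)=\mc S_k+\sum_i\nu_{E_i}(f_k\circ E)\,E_i$ is principal, so its intersection number with each compact component $E_j$ vanishes, which gives the linear system $(\mc E,\mc E)\,\nu_k+(\mc E,\mc S)_{\cdot k}=0$ and hence $\nu_k=-(\mc E,\mc E)^{-1}(\mc E,\mc S)_{\cdot k}$ by uniqueness of the solution. The paper instead stays inside the homological framework it has just set up: it writes $\nu_{E_i}(f_k\circ E)=\frac{1}{2i\pi}\int_{\mf c_{E_i}}E^*\bigl(\frac{df_k}{f_k}\bigr)$, substitutes the already-proved relation (\ref{h1b}) for the class $\mf c_{E_i}$, and concludes from $\frac{1}{2i\pi}\int_{E(\mf c_{\mc S_\ell})}\frac{df_k}{f_k}=\delta_{\ell k}$. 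Both are correct and standard; yours re-derives the same linear system from intersection theory (so agreement with (\ref{h1b}) is a check of consistency), while the paper's residue computation reads the multiplicities directly off the homology relation and requires no appeal to degrees of line bundles. One small imprecision in your closing bookkeeping remark: the transversal disc to $E_i$ lives in $\mc B$ and does not meet $\mc S_k$ in general; what carries the count $\nu_{E_i}(f_k\circ E)$ is its image under $E$, a curve germ through the origin whose intersection multiplicity with $S_k$ — equivalently the linking number of $E(\mf c_{E_i})$ with $S_k$, i.e.\ the winding number of $f_k$ along it — equals that multiplicity. This does not affect the validity of your argument.
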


\begin{dem}
Il suffit d'\'{e}crire matriciellement les \'{e}quations
\begin{equation}\label{h1}
0=\sum\limits_{D\in\comp(\mc D)}(E_{i},D)\mf c_{D}=\sum\limits_{j=1}^{n}(E_{i},E_{j})\mf c_{E_{j}}+\sum\limits_{k=1}^{r}(E_{i},\mc S_{k})\mf c_{\mc S_{k}}
\end{equation}
qui se d\'{e}duisent des relations (\ref{relgamma}),
puis d'exprimer les $\mf c_{E_{i}}$ en fonction des $\mf c_{\mc S_{k}}$ gr\^{a}ce au fait bien connu que $\det(\mc E,\mc E)$ est \'{e}gal \`{a} $\pm 1$. Finalement,
\begin{eqnarray}
\nu_{\ssstyle E_i}(f_k\circ E)&=&\frac{1}{2i\pi}\int_{\mf c_{E_{i}}}E^*\left(\frac{df_{k}}{f_{k}}\right)=\frac{1}{2i\pi}\int_{-\sum\limits_{\ell=1}^{r}\big((\mc E,\mc E)^{-1}(\mc E,\mc S )\big)_{i\ell}E(\mf c_{\mc S_\ell})}\frac{df_{k}}{f_{k}}\nonumber\\
&=&-\big((\mc E,\mc E)^{-1}\cdot(\mc E,\mc S )\big)_{ik}\,,\label{ord}
\end{eqnarray}
car $\frac{1}{2i\pi}\int_{E(\mf c_{\mc S_\ell})}\frac{df_{k}}{f_{k}}=\delta_{\ell k}$.
\end{dem}

\subsection{D\'{e}composition de JSJ}\label{3.2}
Ce qui suit est bien connu des sp\'{e}cialistes de la topologie des $3$-vari\'{e}t\'{e}s. Il s'agit des applications aux singularit\'{e}s de courbes, de la classification des vari\'{e}t\'{e}s de dimension 3 due \`{a} Waldhausen \cite{Waldhausen}, Jaco-Shalen \cite{JacoShalen} et Johannson \cite{Johannson}. Cette \'{e}tude est effectu\'{e}e par Michel-Weber \cite{MW} et par Neumann \cite{NeumannTAMS,Neumann}, via des techniques de plombage. Dans ce paragraphe nous  pr\'{e}cisons et adaptons ces m\'{e}thodes, afin de mettre en \'{e}vidence les propri\'{e}t\'{e}s  qui nous seront utiles au chapitre suivant.  Pour les \'{e}nonc\'{e}s pr\'{e}cis des th\'{e}or\`{e}mes utilis\'{e}s, nous nous r\'{e}f\'{e}rons \`{a}  \cite{LMW} dont nous adoptons le vocabulaire; le lecteur pourra aussi se r\'{e}f\'{e}rer \`{a} la monographie \cite{Wall} de CTC Wall,  ainsi qu'\`{a} l'article   \cite{NeumannSwarup} de Neumann-Swarup.
\\

Avec les notations (\ref{KD}) et (\ref{XK}), d\'{e}finissons pour tout point singulier $s$ et toute composante $D$ de $\mc D$, les sous-vari\'{e}t\'{e}s \`{a} bord de $\mc M_\eta$ suivantes :
\begin{equation}\label{blocselem}
\mc M_{s}:=
\mc M_{\eta}\cap\Omega_{s}\quad {\rm et} \quad\mc M_D:=\mc M_\eta(K_{D})\,;
\end{equation}
nous les appelons ici \emph{blocs \'{e}l\'{e}mentaires} de $\mc M_\eta$. La d\'{e}composition de $\mc M_\eta$ en ``blocs de Jaco-Shalen-Johannson'' (JSJ) et en ``tores \'{e}paissis'', que nous allons maintenant d\'{e}finir, sera obtenue en agr\'{e}geant de tels blocs.\\

%
Notons $\mf R\subset\comp(\mc D)$ l'ensemble des composantes de $\mc D$ de valence $\ge 3$. Celles-ci correspondent aux  les sommets de rupture de l'arbre dual $\mb A$.
Pr\'{e}cisons la notion de  \emph{cha\^{\i}ne de composantes} reliant  deux \'{e}l\'{e}ments $D'$ et $D''\in \mf R$ : c'est une collection  finie de composantes
\begin{equation}\label{notachaine1}
 \mc C:=\{D_0,\ldots , D_{{l_{\mc C}}+1}\}\,,\quad l_{\mc C}\ge 0,\quad D_0 =D'\,,\quad D_{{l_{\mc C}}+1}=D''\,,
\end{equation}
telle que
\begin{equation}\label{notachaine2}
v(D_1)=\cdots =v(D_{l_{\mc C}})=2\quad \hbox{et}\quad D_j\cap D_{j+1}\neq \emptyset\,,\quad j=0,\ldots ,{l_{\mc C}}\,.
\end{equation}
D\'{e}signons par $\mf C$ l'ensemble de cha\^{\i}nes de composantes de $\mc D$ reliant des composantes de $\mf R$.
Rappelons qu'on appelle \emph{branche morte de $\mc E$ adjacente \`{a} $D\in \mf R$}, toute suite finie  $\mc C :=\{D_0,\ldots , D_{l_{\mc C}}\}$, $l_{\mc C}\geq 1$, de composantes de $\mc E$, telle que 
 \begin{equation}\label{relbrmorte}
D_0=D\,,\quad v(D_j)=2 \,,\quad v(D_{l_{\mc C}})=1\,,\quad D_k\cap D_{k+1}\neq\emptyset\,,
\end{equation}
pour $1\leq j\leq {l_{\mc C}}$ et $0\leq k\leq l_{\mc C}-1$.
La composante $D_{l_{\mc C}}$ est appel\'{e}e \emph{composante d'extr\'{e}mit\'{e} de} $\mc C$ et le point d'intersection de $D_0$ avec $D_1$, \emph{point d'attache de} $\mc C$. Nous d\'{e}signons par $\mf M$ l'ensemble de branches mortes de $\mc E$.\\

Soit $\mc C=\{D_{0},\ldots,D_{\ell_{\mc C}+1}\}\in\mf C$. 
%
D\'{e}signons par $s_j$ le point d'intersection de $D_{j-1}$ et $D_{j}$, $j=0,\ldots ,l_{\mc C}$. Classiquement, pour $\eta >0$ assez petit, ce que nous supposons, $\mc M_{s_j}$ est un \emph{tore \'{e}paissi}, i.e. $\mc M_{s_j}$ est hom\'{e}omorphe au produit   du tore standard  $\mb T:=\partial \mb D_1\times\partial \mb D_1$ avec un intervalle compact. Chaque $\mc M_{D_j}$, $j=1,\ldots ,{l_{\mc C}}$,  est aussi  un tore \'{e}paissi et l'on obtient par recollement une 3-vari\'{e}t\'{e} \`{a} bord $\mc M_{\mc C}$, munie d'un hom\'{e}omorphisme :
\begin{equation}\label{torepaissi}
\breve\sigma _{{}_\mc C} :
\mc M_{\mc C}:=\bigcup _{j=1}^{l_{\mc C}}\mc M_{D_j}\cup\bigcup_{j=0}^{{l_{\mc C}}}\mc M_{s_j} \iso \mb T\times[-1,1]\,.
\end{equation}
Cette structure produit s'\'{e}tend sur un voisinage   du bord de $\mc M_{\mc C}$ sur une 3-vari\'{e}t\'{e} \`{a} bord $\wt{ \mc M}_{\mc C}$, munie d'un hom\'{e}omorphisme
\begin{equation}\label{voistoreepaissi}
\sigma _{{}_\mc C} : \wt{\mc M}_{\mc C}\iso \mb T\times [-1-\epsilon, 1+\epsilon]\,,\ \sigma _{{}_\mc C}^{-1}(\mb T\times[-1, 1]) = \mc M_{\mc C}\,,\  \sigma _{\mc C|\mc M_{\mc C}}=\breve{\sigma }_{\mc C}\,.
\end{equation}


Consid\'{e}rons le 2-tore $\mb T_{\mc C}:= \sigma _{{}_{\mc C}}^{-1}(\mb T\times\{0\})$. L'adh\'{e}rence $B$ de toute composante connexe  de $\mc M_\eta\setminus (\cup_{\mc C\in \mf C}\mb T_{\mc C})$ contient un unique bloc \'{e}l\'{e}mentaire $\mc M_D$, $D\in \mf R$. Nous disons que $B$ est le \emph{bloc de JSJ de $\mc M_\eta$ associ\'{e} \`{a} $D$} et nous le notons $B_D$. Nous d\'{e}signons  par $B_D^\flat$ la composante connexe  de la fermeture de  $\mc M_\eta\setminus \cup_{\mc C\in \mf C}\mc M_{\mc C}$, contenue dans $B_D$.\\

Consid\'{e}rons  une branche morte $\mc C=\{D_0,\ldots , D_{l_{\mc C}}\}\in\mf M$ de $\mc E$. 
Nous notons encore :
\begin{equation}\label{voisbrmorte}
\mc M_{\mc C}:=\bigcup _{j=1}^{l_{\mc C}}\mc M_{D_j}\cup\bigcup_{j=0}^{l_{\mc C}-1}\mc M_{s_j}\,,\quad \hbox{\rm o\`{u}}\quad \{s_j\}:=D_j\cap D_{j+1}\,.
\end{equation}
Ainsi, pour $D\in \mf R$, $B_D^\flat$ est l'union de $\mc M_D$ et des vari\'{e}t\'{e}s $\mc M_{\mc C}$, pour toutes les branches mortes $\mc C$ s'attachant en un point de $D$.
Remarquons que si $\mc C\in\mf M$ alors  ${\mc M_{\mc C}}$ est hom\'{e}omorphe \`{a} un tore solide $\mb D\times \mb S^{1}$; notons aussi que le compl\'{e}mentaire $\mc M_{\mc C}^{\circ}$ d'une fibre de Hopf (non contenue dans $D_{l_{\mc C}-1}$) du diviseur $D_{l_{\mc C}}$ de valence $1$ dans ${\mc M_{\mc C}}$ a le type d'homotopie du tore $\mb S^{1}\times\mb S^{1}$.

\begin{defin}\label{meridienchaine}
Pour $\mc C\in\mf C\cup\mf M$, posons $H_{1}^{\mc C}=H_{1}(\mc M_{\mc C},\mb Z)$, si $\mc C\in\mf C$ et $H_{1}^{\mc C}=H_{1}(\mc M_{\mc C}^{\circ},\mb Z)$, si $\mc C\in\mf M$.
Pour chaque $D_{j}\in\mc C$, la classe $\mf c_{j}$ d'une fibre de $\rho_{D_{j}}$ restreinte \`{a} $\mc M_{D_{j}}$ et orient\'{e}e comme bord d'une courbe holomorphe de $\mc T_{\eta}$, sera appel\'{e}e  \emph{m\'{e}ridien associ\'{e} \`{a} $D_{j}$}.
Si $\mc C\in \mf M$, nous d\'{e}signons par $\mf c_{l_{\mc C}+1}$ et nous appelons  \emph{m\'{e}ridien exceptionnel de $\mc C$}, le  g\'{e}n\'{e}rateur du noyau du morphisme $H_{1}(\mc M^{\circ}_{\mc C},\mb Z)\to H_{1}(\mc M_{\mc C},\mb Z)$
induit par l'inclusion, dont l' orientation et induite par le bord d'une courbe holomorphe de $\mc T_{\eta}$.
\end{defin}

\begin{prop}\label{det}
Si $\mc C\in\mf C\cup\mf M$ alors :
\begin{enumerate}[(i)]
\item
$H_{1}^{\mc C}$ est un groupe ab\'{e}lien libre de rang $2$ qui admet comme syst\`{e}me de g\'{e}n\'{e}rateurs $\mf c_{0},\ldots,\mf c_{l_{\mc C}+1}$, avec les relations\footnote{$e_j$ est aussi \'{e}gal \`{a} la classe de Chern du fibr\'{e} normal de $D_j$ dans $\mc B$, int\'{e}gr\'{e}e sur la classe fondamentale.}
\begin{equation}\label{formindice}
    \mf c_{j-1} + e_j \mf c_j + \mf c_{j+1} = 0\,,\quad e_{j}=(D_{j},D_{j})\,\quad j=1,\ldots, {l_{\mc C}}\,;
\end{equation}
\item
 pour tout $j=0,\ldots,l_{\mc C}$, les \'{e}l\'{e}ments $\mf c_{j},\mf c_{j+1}$ forment base de $H_{1}^{\mc C}$; ces bases d\'{e}finissent toutes  la m\^{e}me orientation;
la  $2$-forme $\mb Z$-lin\'{e}aire canonique $\det(\cdot,\cdot)$ sur $H_{1}^{\mc C}$ telle que $\det(\mf c_{j},\mf c_{j+1})=1$ et $\det(\mf c_{j},c_{j})=0$, correspond  \`{a} la forme d'intersection de chaque composante connexe de $\partial \mc M_{\mc C}$, consid\'{e}r\'{e}e comme surface orient\'{e}e;
%
\item on a $\mf c_{0}=a\,\mf c_{l_{\mc C}}+ b\, \mf c_{l_{\mc C}+1}$, avec $a=\pm\det(A)\neq 0$, o\`{u} $A$ d\'{e}signe la matrice de la restriction au diviseur $\bigcup\limits_{j=1}^{l_{\mc C}}D_{j}$, de la forme d'intersection de $\mc E$;
\item  les \'{e}l\'{e}ments $\mf c_{0}\otimes 1$, $\mf c_{l_{\mc C}+1}\otimes 1$ forment une $\mb Q$-base de $H_{1}^{\mc C}\otimes \mb Q$.
\end{enumerate}
\end{prop}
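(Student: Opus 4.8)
The plan is to treat the chain case ($\mc C\in\mf C$) and the dead branch case ($\mc C\in\mf M$) in parallel, reducing everything to the computation of $H_1$ of a plumbed $3$-manifold built from a linear chain of $2$-disk bundles over spheres. First I would set up the local picture: each elementary block $\mc M_{D_j}$ ($1\le j\le l_{\mc C}$) is a thickened torus, and each $\mc M_{s_j}$ is a thickened torus glued in along the boundary tori; gluing them in the linear order prescribed by the chain gives $\mc M_{\mc C}$. For a dead branch, the extremity component $D_{l_{\mc C}}$ has valence $1$, so the block $\mc M_{D_{l_{\mc C}}}$ is a solid torus and $\mc M_{\mc C}\cong\mb D\times\mb S^1$, while removing a Hopf fiber recovers a thickened torus $\mc M_{\mc C}^\circ$; this is why $H_1^{\mc C}$ has rank $2$ in both cases.

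For (i), I would apply Mayer--Vietoris (or equivalently Seifert--van Kampen followed by abelianization, exactly as in the proof of Corollary \ref{homologie}) iteratively along the chain. At each singular point $s_j=D_{j-1}\cap D_j$ the local coordinates $(x_{s_j},y_{s_j})$ identify $\mc M_{s_j}$ with a thickened copy of $\mb T$ whose two boundary tori carry the meridian pairs $(\mf c_{j-1},\mf c_j)$ and $(\mf c_j,\mf c_{j-1})$; the plumbing relation coming from the self-intersection $e_j=(D_j,D_j)$ is precisely $\mf c_{j-1}+e_j\mf c_j+\mf c_{j+1}=0$ (the integer $e_j$ being the Chern number of the normal bundle of $D_j$, integrated over the fundamental class, which is classical plumbing calculus — see \cite{Mumford,NeumannTAMS,Wall}). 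Collecting these relations for $j=1,\dots,l_{\mc C}$ presents $H_1^{\mc C}$ on the generators $\mf c_0,\dots,\mf c_{l_{\mc C}+1}$; a rank count (the relation matrix is an $l_{\mc C}\times(l_{\mc C}+2)$ matrix of the expected corank) gives freeness of rank $2$.

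For (ii), I would argue by downward induction on $j$ using \eqref{formindice}: from a basis $(\mf c_j,\mf c_{j+1})$ the relation expresses $\mf c_{j-1}=-e_j\mf c_j-\mf c_{j+1}$, so $(\mf c_{j-1},\mf c_j)$ is again a basis and $\det(\mf c_{j-1},\mf c_j)=\det(-e_j\mf c_j-\mf c_{j+1},\mf c_j)=\det(\mf c_j,\mf c_{j+1})=1$, which shows all these bases induce the same orientation; identifying this $\det$ with the intersection form on a boundary torus of $\mc M_{\mc C}$ is a matter of checking the orientation convention (meridians oriented as boundaries of holomorphic curves) on one block and propagating. For (iii), expanding $\mf c_0$ in the basis $(\mf c_{l_{\mc C}},\mf c_{l_{\mc C}+1})$ and iterating the recursion shows the coefficient $a$ of $\mf c_{l_{\mc C}}$ is, up to sign, the determinant of the tridiagonal matrix $A$ with diagonal $(e_1,\dots,e_{l_{\mc C}})$ and off-diagonal $1$'s — i.e. $\det$ of the restriction of the intersection form of $\mc E$ to $\bigcup_{j=1}^{l_{\mc C}}D_j$ — and this is nonzero because the intersection form of $\mc E$ is negative definite, hence so is any principal minor. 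Finally (iv) is immediate from (iii): since $a\neq 0$, the change of basis from $(\mf c_{l_{\mc C}},\mf c_{l_{\mc C}+1})$ to $(\mf c_0,\mf c_{l_{\mc C}+1})$ is invertible over $\mb Q$.

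The main obstacle is bookkeeping rather than conceptual: getting the \emph{signs} and the \emph{orientations} consistent across all the gluings — in particular making the plumbing relation come out as $\mf c_{j-1}+e_j\mf c_j+\mf c_{j+1}=0$ with the correct sign of $e_j$, and matching $\det(\cdot,\cdot)$ with the boundary intersection form — which requires fixing once and for all the convention that each $\mf c_{D}$ is oriented as the boundary of a local holomorphic transversal and tracking how the coordinate swaps $x_{s_j}\leftrightarrow y_{s_j}$ at successive singular points act on the ordered meridian bases. For the dead-branch case one extra point needs care: the identification of $\mf c_{l_{\mc C}+1}$ as the generator of $\ker\big(H_1(\mc M_{\mc C}^\circ)\to H_1(\mc M_{\mc C})\big)$ must be reconciled with its role in the relation \eqref{formindice} for $j=l_{\mc C}$, which amounts to noting that filling in the Hopf fiber kills exactly $\mf c_{l_{\mc C}+1}$ while leaving $\mf c_{l_{\mc C}}$ (a meridian of the extremity disk) as a generator of $H_1(\mc M_{\mc C})\cong\mb Z$.
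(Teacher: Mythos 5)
Your proposal is correct and follows essentially the same route as the paper: identify $H_1^{\mc C}$ with the homology of a torus, derive the plumbing relations \eqref{formindice} from the intersection data, propagate the basis and the form $\det(\cdot,\cdot)$ along the chain via the recursion, and obtain $a=\pm\det A\neq 0$ from the (tridiagonal) matrix form of the relations together with the negative definiteness of the intersection form of $\mc E$. The paper merely phrases step (iii) via Cramer's rule rather than iterating the recursion, which is the same computation.
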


\begin{dem}
L'affirmation (i) est une cons\'{e}quence directe du fait que $H_{1}^{\mc C}$ s'identifie \`{a} l'homologie enti\`{e}re d'un tore et des relations (\ref{h1}). L'assertion
(ii) se d\'{e}duit facilement \`{a} partir des relations (\ref{formindice}),
 voir aussi \cite{Dimca,EN},  qui s'\'{e}crivent sous forme matricielle comme
\begin{equation}\label{relmatricielle}
\underbrace{
    \left[
\begin{array}{cccccc}
e_1 & 1 & 0 &0& \cdots & 0\\
1 & e_2 & 1 &0&\cdots  & 0\\
0& \ddots& \ddots&\ddots& \ddots& \vdots\\
\vdots& \ddots& \ddots & \ddots & \ddots & 0\\
0 &\cdots &0&1&e_{l_{\mc C}-1} &1\\
0 & \cdots&\cdots & 0 & 1 & e_{l_{\mc C}}
\end{array}\right]}_{ A}
\left[\begin{array}{c}
\mf c_{1}\\
\mf c_{2}\\
\vdots\\
\vdots\\
0\\
\mf c_{{l_{\mc C}}}
\end{array}\right]
= -
\left[
\begin{array}{c}
\mf c_{0}\\ 0\\ \vdots\\\vdots\\0\\ \mf c_{{l_{\mc C}}+1}\end{array}
\right]\;.
\end{equation}
En appliquant la formule de Cramer on voit sans peine que le coefficient $a$ de l'expression $\mf c_{0}=a\mf c_{l_{\mc C}}+b\mf c_{l_{\mc C}+1}$ n'est autre que $a=\pm\det A$, o\`{u} $A$ est la matrice de la restriction au diviseur $\bigcup\limits_{j=1}^{l_{\mc C}}D_{j}\subset\mc E$ de la
forme d'intersection de $\mc E$ qui est d\'{e}finie n\'{e}gative. Ce qui donne (iii), car $\det A\neq 0$.
Finalement, l'affirmation (iv) est une cons\'{e}quence directe de (iii).
\end{dem}

D\'{e}signons par $S_{\mf M}(D)\subset S(D)$  l'ensemble des points d'at\-tache des branches mortes sur $D$ et notons :
\begin{equation}\label{baseseifert}
\wh S(D):=S(D)\setminus S_{\mf M}(D)\,,\quad
   \wh K_D := D\setminus \bigcup\limits_{s\in \wh S(D)}
   \inte D_s\,=\,K_{D}\cup\bigcup\limits_{s\in S_{\mf M}(D)}D_{s}\,.
\end{equation}
\begin{cor}\label{consfibrseifert} Pour chaque composante $D$ de $\mc E$ de valence $\geq 3$, la restriction \`{a} $\mc M_D$ de la fibration $\rho _D$ du syst\`{e}me local $\mc L$, se prolonge en une fibration de Seifert $\wh\rho _D : B_D^\flat\to \wh K_D$, de fibres exceptionnelles   $\wh\rho _D^{-1}(s)$, $s\in S_{\mf M}(D)$. De plus  $\wh\rho _D^{-1}(s)$ est l'intersection de $B_D^\flat$ avec une fibre de la fibration de Hopf de base la composante d'extr\'{e}mit\'{e} de la branche morte qui s'attache \`{a} $D$ au  point $s$.
\end{cor}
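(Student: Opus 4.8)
The plan is to construct the Seifert fibration on $B_D^\flat$ by gluing the Hopf fibration $\rho_D$ on $\mc M_D$ to the solid-torus fibrations coming from the dead branches attached to $D$, and then check that the resulting map has the announced exceptional fibers. First I would observe that by definition $B_D^\flat = \mc M_D \cup \bigcup_{\mc C} \mc M_{\mc C}$, the union running over the dead branches $\mc C = \{D_0,\ldots,D_{l_{\mc C}}\}$ with $D_0 = D$, glued along the thickened tori $\mc M_{s_0}$ where $s_0 = D_0 \cap D_1 \in S_{\mf M}(D)$. On $\mc M_D$ the map is already $\rho_D : \mc M_D \to K_D$, and $\wh K_D$ is obtained from $K_D$ by adding back the disks $D_s$, $s \in S_{\mf M}(D)$; so I want a fibration of $\mc M_{\mc C}$ over $D_s \cong D_{s_0}$ that agrees with $\rho_D$ on the overlap $\mc M_{s_0}$ and has a single exceptional fiber over $s_0$.

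The core computation is local to one dead branch. Since $\mc M_{\mc C}$ is a solid torus $\mb D \times \mb S^1$ (as noted just before the corollary) and $\mc M_{s_0}$ is a thickened torus in its boundary, a Seifert fibration of $\mc M_{\mc C}$ over a disk is determined by the slope, on $\partial \mc M_{s_0}$, of the curve bounding a meridian disk of the solid torus versus the fiber class; here the relevant fiber class on the $\mc M_D$ side is $\mf c_0$ (the meridian associated to $D = D_0$) and I must express it in the basis of $H_1$ adapted to the dead branch. This is exactly what Proposition \ref{det}(iii) supplies: $\mf c_0 = a\,\mf c_{l_{\mc C}} + b\,\mf c_{l_{\mc C}+1}$ with $a = \pm\det A \neq 0$, where $A$ is the intersection matrix of the chain $\bigcup_{j=1}^{l_{\mc C}} D_j \subset \mc E$, and $\mf c_{l_{\mc C}+1}$ is the exceptional meridian, i.e. the class of a Hopf fiber of the end component $D_{l_{\mc C}}$. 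The multiplicity of the exceptional fiber over $s_0$ is then $|a| = |\det A|$, and since $a \neq 0$ the fibration is a genuine Seifert fibration (orbifold, not a product) over $\wh K_D$ precisely when some dead branch is present with $|a| \geq 2$; when $|a| = 1$ the fiber over $s_0$ is regular and no harm is done. The fact that $\wh\rho_D^{-1}(s_0)$ is a Hopf fiber of the end component $D_{l_{\mc C}}$ is read off directly from this: the exceptional meridian $\mf c_{l_{\mc C}+1}$ is by Definition \ref{meridienchaine} the generator of $\ker(H_1(\mc M_{\mc C}^\circ) \to H_1(\mc M_{\mc C}))$, which is represented by a Hopf fiber of $D_{l_{\mc C}}$, and the core circle of the solid torus $\mc M_{\mc C}$ carrying the Seifert fiber over $s_0$ is isotopic to that Hopf fiber inside $\mc M_{\mc C}$.

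The one point requiring genuine care — the main obstacle — is the compatibility of the gluing: the fibration $\rho_D$ restricted to $\mc M_D$ must match, along $\mc M_{s_0} \cap \mc M_D$, the foliation by Seifert fibers of $\mc M_{\mc C}$ near that torus, not merely up to homotopy but in a way that the two fibrations patch to a smooth (or at least topological) locally trivial Seifert fibration on the overlap region $\mc M_{s_0}$. Here I would exploit that, by the axioms of the local system $\mc L$ (Definition \ref{systlocal}(ii) and (iv)) and by Proposition \ref{retractT}, $\rho_D$ is holomorphic over $K_D$, and over $\Omega_{s_0}$ the divisor $\mc D$ is $\{x_{s_0} y_{s_0} = 0\}$ with $\rho_D$ coinciding with the coordinate projection $m \mapsto z(m)$ in the coordinate $z$ nonzero on $D_0$; thus on $\mc M_{s_0}$ the fiber of $\rho_D$ is the torus curve $\{y_{s_0} = \mathrm{const}\}$ and its class is $\mf c_0$. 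On the solid-torus side, one builds the Seifert fibration of $\mc M_{\mc C}$ \emph{so that} it extends $\rho_{D_0}|_{\mc M_{s_0}}$ — this is possible because a Seifert fibration of a solid torus can be prescribed on a boundary collar by any primitive class, and $\mf c_0$ is primitive in $H_1(\partial \mc M_{s_0})$ by Proposition \ref{det}(ii). Iterating the elementary solid-torus description along the chain $D_1, \ldots, D_{l_{\mc C}}$ and keeping track of the successive blow-up relations $\mf c_{j-1} + e_j \mf c_j + \mf c_{j+1} = 0$ from Proposition \ref{det}(i) identifies the resulting fiber class on the end as $\mf c_{l_{\mc C}+1}$ and the multiplicity as $|a|$, which closes the argument.
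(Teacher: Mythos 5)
Your proof is correct and follows the paper's own argument exactly: both rest on Proposition (\ref{det})(iii), which gives $\mf c_{0}=a\,\mf c_{l_{\mc C}}+b\,\mf c_{l_{\mc C}+1}$ with $a=\pm\det A\neq 0$, so the Hopf-fibre class on the boundary of the solid torus $\mc M_{\mc C}$ is not a multiple of its meridian $\mf c_{l_{\mc C}+1}$ and therefore extends to a Seifert fibration whose (possibly) exceptional fibre is the core, i.e.\ the trace of a Hopf fibre of the end component. One small slip to correct: $\mf c_{l_{\mc C}+1}$, the generator of $\ker\bigl(H_{1}(\mc M_{\mc C}^{\circ},\mb Z)\to H_{1}(\mc M_{\mc C},\mb Z)\bigr)$, is the class of a small loop \emph{linking} the removed Hopf fibre of $D_{l_{\mc C}}$ (the meridian of the solid torus), not the class of that Hopf fibre itself, which is $\mf c_{l_{\mc C}}$ --- this does not affect your conclusion, since what you actually use is that the core of $\mc M_{\mc C}$ is isotopic to that Hopf fibre.
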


\begin{dem}
Donnons-nous $\mf m:=[\partial\mb D\times\{1\}]$ un m\'{e}ridien  et $\mf p:=[\{1\}\times\mb S^{1}]$ un parall\`{e}le
dans $H_{1}(\mb D^*\times \mb S^{1},\mb Z)$. Il est bien connu qu'une courbe de $\mb D^*\times \mb S^{1}$
de classe d'homologie enti\`{e}re  $a\mf  m+b\mf p$ est une fibre d'une fibration de Seifert de $\mb D\times\mb S^{1}$, si et seulement si $a\neq 0$. On conclut en appliquant la partie (iii) de la proposition pr\'{e}c\'{e}dente et en remarquant que $\mf m=\mf c_{l_{\mc C}+1}$.
\end{dem}

\begin{obs} La structure produit des tores \'{e}paissis $\mc M_{\mc C}$, $\mc C\in \mf C$, permet d'\'{e}tendre facilement  $\wh\rho _D$ en une fibration de Seifert
\begin{equation}\label{seifertetendue}
\wh\rho _D^{\rm ext} : B_D\to \wh K_D^{\rm ext}\,,\quad \quad\wh K_D^{\rm ext}:=D\setminus \cup_{s\in \wh S(D)}\check{D}_s\,,\quad \wh\rho _{D|B_D^\flat}^{\rm ext}=\wh \rho _D\,,
\end{equation}
dont les fibres  sont contenues dans les fibres de $\sigma _{{}_{\mc C}}$, $\check{D}_s$ d\'{e}signant ici un disque conforme ferm\'{e} de centre $s$ contenu dans $\inte D_s$.
\end{obs}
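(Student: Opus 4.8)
The plan is to exhibit $B_D$ as $B_D^{\flat}$ with a collar attached along each chain issuing from $D$, and to propagate the circle fibration of $\rho_D$ through these collars by means of the product trivialisations $\sigma_{\mc C}$.

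First I would make the decomposition of $B_D$ explicit. Given a chain $\mc C\in\mf C$ attached to $D$ at a point $s\in\wh S(D)$, split $\mc M_{\mc C}=H_{\mc C}\cup H'_{\mc C}$ along $\mb T_{\mc C}=\sigma_{\mc C}^{-1}(\mb T\times\{0\})$, where $H_{\mc C}$ denotes the half lying on the side of $D$; it is a thickened torus, which after relabelling we may write $H_{\mc C}=\sigma_{\mc C}^{-1}(\mb T\times[-1,0])$, glued to the elementary block $\mc M_D$ along the boundary torus $T_{\mc C}:=\rho_D^{-1}(\partial D_s)\cap\mc M_\eta\subset\partial\mc M_D$, with remaining boundary $\mb T_{\mc C}\subset\partial B_D$. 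Then $B_D=B_D^{\flat}\cup\bigcup_{s\in\wh S(D)}H_{\mc C}$, glued along the tori $T_{\mc C}$. Choosing the disc $\check D_s\subset\inte D_s$ so that $\partial\check D_s$ corresponds under $\sigma_{\mc C}$ to $\mb T_{\mc C}$, the half $H_{\mc C}$ becomes identified with the circle bundle $\mc M_\eta\cap\rho_D^{-1}(D_s\setminus\check D_s)$ over the collar annulus $D_s\setminus\check D_s$ (reading $\rho_D$ near $s$ through property (iv) of $\mc L$).

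Next I would perform the extension, which is soft. Over $T_{\mc C}=\sigma_{\mc C}^{-1}(\mb T\times\{-1\})$ the fibration $\wh\rho_D$ coincides with $\rho_D$, hence is a locally trivial circle fibration whose fibres are the Hopf circles of $D$; each such fibre, being a circle in $T_{\mc C}$, lies trivially in the leaf $\mb T_{-1}$ of the torus foliation $\mb T_t:=\sigma_{\mc C}^{-1}(\mb T\times\{t\})$, $t\in[-1,0]$, of $H_{\mc C}$. I would then transport this circle fibration along the tori $\mb T_t$ using the product structure $\sigma_{\mc C}$, declaring the fibre over the point of $D_s\setminus\check D_s$ ``at level $t$'' to be the image of the corresponding Hopf circle on $T_{\mc C}$ under the canonical identification $\mb T_{-1}\cong\mb T_t$. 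This is again a locally trivial circle fibration; it agrees with $\wh\rho_D$ on $T_{\mc C}$ (and, $\sigma_{\mc C}$ being chosen compatibly with $\rho_D$ near $s$ via property (iv), on a collar of $T_{\mc C}$ in $\mc M_D$ as well, so the two glue), and — being nothing but the push of a circle fibration of a torus across a product $\mb T\times[-1,0]$ — it introduces no exceptional fibre. Glueing these fibrations of the $H_{\mc C}$ to $\wh\rho_D$ on $B_D^{\flat}$ yields a Seifert fibration
\[
\wh\rho_D^{\rm ext}\colon B_D\longrightarrow \wh K_D^{\rm ext}=\wh K_D\cup\bigcup_{s\in\wh S(D)}(D_s\setminus\check D_s)=D\setminus\bigcup_{s\in\wh S(D)}\check D_s ,
\]
restricting to $\wh\rho_D$ on $B_D^{\flat}$ and all of whose fibres over the collars lie in the tori $\mb T_t$, i.e.\ in the fibres of $\sigma_{\mc C}$.

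Since extending a circle fibration across a collar $\mb T\times[-1,0]$ never meets an obstruction and never forces an exceptional fibre, the topological content is essentially trivial; the one point needing care is the bookkeeping of the first step — identifying $H_{\mc C}$ with the restriction of the Hopf fibration over the annulus $D_s\setminus\check D_s$, and checking, via property (iv) of the system local together with the homogeneity of $|f\circ E|$ near $s$, that near $s$ this Hopf fibration is horizontal for $\sigma_{\mc C}$, so that the transported fibration matches $\rho_D$ on the overlap and the two definitions of $\wh\rho_D^{\rm ext}$ glue.
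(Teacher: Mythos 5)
Votre argument est correct et c'est exactement celui que le papier a en tête : la remarque est énoncée sans démonstration, et le mécanisme voulu est précisément le transport de la fibration en cercles $\rho_{D|T_{\mc C}}$ à travers la moitié du tore épaissi au moyen de la structure produit $\sigma_{\mc C}$, ce qui ne crée aucune fibre exceptionnelle et maintient chaque fibre dans un tore $\sigma_{\mc C}^{-1}(\mb T\times\{t\})$. Seule petite imprécision de comptabilité : $H_{\mc C}$ n'est pas littéralement l'ensemble $\mc M_\eta\cap\rho_D^{-1}(D_s\setminus\check{D}_s)$ (il s'étend le long de toute la demi-cha\^{\i}ne, bien au-delà de $\Omega_D$) ; l'anneau $D_s\setminus\check{D}_s$ ne sert que d'étiquette choisie pour l'espace des feuilles de la fibration transportée, et c'est bien ainsi que votre construction l'utilise effectivement.
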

Ainsi chaque tore $\mb T_{\mc C}$,   $\mc C\in\mf C$, qui est l'intersections de deux blocs de JSJ, est muni des deux fibrations en cercles, obtenues en restreignant \`{a} $\mb T_{\mc C}$ les fibrations de Seifert de chaque bloc adjacent. Les classes d'homologie des fibres correspondantes \`{a} ces deux fibrations sont $\mf c_{0}$ et $\mf c_{l_{\mc C}+1}$, qu'on peut consid\'{e}rer   comme des \'{e}l\'{e}ments de $H_1(\mb T_{\mc C},\mb Z)$ puisque l'inclusion $\mb T_{\mc C}\subset \mc M_{\mc C}$ est un isomorphisme en homologie.

\begin{obs}\label{incompseifert} Soit $\mc C\in\mf C$. Pour $j\in\{0,l_{\mc C}+1\}$, en utilisant que $v(D_{j})\ge 3$ on voit ais\'{e}ment \cite{M-M} que
$\mb T_{\mc C}$ est incompressible dans $B_D$, ce qui donne le monomorphisme
%
$H_{1}(\mb T_{\mc C},\mb Z)\hookrightarrow H_{1}(B_{D_{j}},\mb Z)$.
Ainsi $\mf c_{0}$ et $\mf c_{l_{\mc C}+1}$ sont aussi ind\'{e}pendantes dans $H_{1}(B_{D_{j}},\mb Z)$  et donc les fibrations de Seifert de $B_{D_{0}}$ et $B_{D_{l_{\mc C }+1}}$ ne sont pas compatibles. En utilisant les relations (\ref{h1b}), il est facile de voir que l'image de $H_{1}(\mb T_{\mc C},\mb Z)$ dans $H_{1}(\mc M_{\eta},\mb Z)$ est diff\'{e}rente des images de l'homologie des tores  bordant  $\mc M_{\eta}$.
\end{obs}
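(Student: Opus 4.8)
The plan is to run the statement through the standard dictionary for Seifert fibred $3$-manifolds, the Seifert structure on the blocks being the one produced in Corollary~\ref{consfibrseifert} and extended in~(\ref{seifertetendue}). First I would observe that for $j\in\{0,l_{\mc C}+1\}$ the block $B_{D_j}$ is Seifert fibred by $\wh\rho_{D_j}^{\rm ext}:B_{D_j}\to\wh K_{D_j}^{\rm ext}$ over a base $2$-orbifold whose underlying surface is a $2$-sphere with $\#\wh S(D_j)\geq 1$ holes and which carries $\#S_{\mf M}(D_j)$ cone points, the total $\#\wh S(D_j)+\#S_{\mf M}(D_j)=v(D_j)$ being $\geq 3$. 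In particular this base is neither a disc with at most one cone point nor an annulus, so $B_{D_j}$ is neither a solid torus nor a thickened torus $\mb T\times[-1,1]$; the classical fact that the boundary tori of such a Seifert manifold are incompressible (see \cite{Wall}, or \cite{M-M} where it is used in exactly this situation) then shows that the boundary component $\mb T_{\mc C}$ of $B_{D_j}$ is incompressible in $B_{D_j}$, whence the monomorphism $\pi_1(\mb T_{\mc C})\hookrightarrow\pi_1(B_{D_j})$.

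To upgrade this to the monomorphism $H_1(\mb T_{\mc C};\mb Z)\hookrightarrow H_1(B_{D_j};\mb Z)$ asserted in the remark I would argue directly from the Seifert structure: choose on $\mb T_{\mc C}$ a fibre $\phi$ of $\wh\rho_{D_j}^{\rm ext}$ and a curve $\sigma$ mapped homeomorphically by $\wh\rho_{D_j}^{\rm ext}$ onto the boundary circle $c$ of $\wh K_{D_j}^{\rm ext}$ lying below $\mb T_{\mc C}$; the classes of $\phi$ and $\sigma$ form a basis of $H_1(\mb T_{\mc C};\mb Z)$, and under $\wh\rho_{D_j}^{\rm ext}$ the first maps to $0$ while the second maps to $[c]\in H_1(\wh K_{D_j}^{\rm ext};\mb Z)$. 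Since the base has non-empty boundary the fibre class is non-torsion in $H_1(B_{D_j};\mb Z)$, and if moreover $[c]\neq 0$ then $[\phi]$ and $[\sigma]$ are $\mb Z$-independent there; hence $H_1(\mb T_{\mc C};\mb Z)\to H_1(B_{D_j};\mb Z)$ is injective and, $\mf c_0,\mf c_{l_{\mc C}+1}$ being a $\mb Q$-basis of $H_1(\mb T_{\mc C};\mb Q)$ by Proposition~\ref{det}(iv), these two classes remain independent in $H_1(B_{D_j};\mb Z)$. \emph{The crux}, and the only point where one really uses that $\mc D$ resolves a \emph{plane} curve and not an arbitrary negative definite plumbing, is the non-vanishing of $[c]$, equivalently $\#\wh S(D_j)\geq 2$ (the boundary circle of a one-holed sphere bounds): a chain endpoint $D_j\in\mf R$ is adjacent, besides the chain $\mc C$, to at least one further component of $\mc D$ which is not the attaching component of a dead branch. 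I would derive this from the relations $m_De_D+\sum_{D'\sim D}m_{D'}+(D,\mc S)=0$ between the multiplicities $m_D=\nu_D(f\circ E)$, the strict monotonicity of the $m_D$ along a dead branch (forced by the absence of $(-1)$-curves of valence $\leq 2$ in a minimal resolution), and the negative definiteness of the intersection form of $\mc E$, as in \cite{M-M} (or, in splice-diagram terms, \cite{NeumannTAMS,EN}).

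Granting the independence of $\mf c_0$ and $\mf c_{l_{\mc C}+1}$ in $H_1(B_{D_j};\mb Z)$, the incompatibility of the two Seifert fibrations requires nothing further: the fibres of $\wh\rho_{D_0}^{\rm ext}$ and of $\wh\rho_{D_{l_{\mc C}+1}}^{\rm ext}$ meet $\mb T_{\mc C}$ along circles of classes $\mf c_0$ and $\mf c_{l_{\mc C}+1}$, which are already $\mb Q$-independent on $\mb T_{\mc C}$ by Proposition~\ref{det}(iv), hence non-parallel, so no homeomorphism of $\mb T_{\mc C}$ matches them and the two fibrations do not glue across $\mb T_{\mc C}$. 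For the final assertion I would invoke Corollary~\ref{homologie}: in the basis $(\mf c_{\mc S_1},\dots,\mf c_{\mc S_r})$ of $H_1(\mc M_\eta;\mb Z)$ the image of $\mf c_D$ for any $D\in\comp(\mc E)$ is $\sum_k\nu_D(f_k\circ E)\,\mf c_{\mc S_k}$, a combination all of whose coefficients are strictly positive (because $f_k(0)=0$ forces $f_k\circ E$ to vanish along every exceptional component); hence the image of $H_1(\mb T_{\mc C};\mb Z)$ in $H_1(\mc M_\eta;\mb Z)$ is generated by the two vectors $\mf c_0,\mf c_{l_{\mc C}+1}$, both with strictly positive coordinates, whereas the image of $H_1(T_k;\mb Z)$ for a boundary torus $T_k\subset\partial\mc M_\eta$ contains the coordinate vector $\mf c_{\mc S_k}$, the image of the meridian of the link component $\mc S_k\cap\partial\mc B$. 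That a rank $\leq 2$ subgroup generated by two vectors with strictly positive coordinates coincides with none of these is then easily checked from~(\ref{h1b}), e.g.\ by comparing coordinates. The main obstacle throughout is, as indicated, the plane-curve input $\#\wh S(D_j)\geq 2$; the rest is formal.
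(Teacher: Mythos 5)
Your reduction of the remark to the single claim $\#\wh S(D_j)\geq 2$ is a reasonable way to organise an argument, but that claim --- your self-declared crux --- is false, and the paper's own closing example refutes it: for $f=y(y^{2}-x^{3})^{2}-x^{8}$ the component $E_{3}$ has valence $3$ with \emph{two} dead branches $E_{1},E_{2}$ attached and a single chain towards $E_{5}$, so $\#\wh S(E_{3})=1$ and the base $\wh K_{E_{3}}^{\rm ext}$ of the Seifert fibration of $B_{E_{3}}$ is a disc with two cone points. In that situation your boundary curve $c$ is null-homologous in the underlying surface of the base, and the conclusion you were after genuinely fails: $B_{E_{3}}^{\flat}$ is $\mc M_{E_{3}}\cong K_{E_{3}}\times\mb S^{1}$ with two solid tori glued in, and $H_{1}(B_{E_{3}};\mb Z)$ is presented by the relations $-3\mf c+\wt{\mf c}_{1}+\wt{\mf c}_{2}+\wt{\mf c}_{3}=0$ (formule (\ref{formindice})/(\ref{formuleintersec})), $\mf c=3\wt{\mf c}_{1}$ and $\mf c=2\wt{\mf c}_{2}$ (the meridians of the two solid tori die), hence has rank $1$; the group $H_{1}(\mb T_{\mc C};\mb Z)\cong\mb Z^{2}$ cannot inject into it. (Compare the trefoil exterior, Seifert fibred over a disc with cone points $2,3$: incompressible boundary, yet $H_{1}(\partial)\to H_{1}$ is not injective.) So no argument from negative definiteness or from the multiplicities will yield $\#\wh S(D_j)\geq 2$, and the intermediate assertion of the remark --- the monomorphism on $H_{1}$ and the independence of $\mf c_{0},\mf c_{l_{\mc C}+1}$ in $H_{1}(B_{D_j};\mb Z)$ --- is itself not correct as stated; incompressibility only gives injectivity on $\pi_{1}$.

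The parts of the remark that are actually used later survive, and there your proposal is essentially right: your Seifert-fibred proof of incompressibility is fine ($v(D_j)\geq 3$ rules out the solid torus), and, as you observe, the incompatibility of the two Seifert fibrations needs only the independence of $\mf c_{0}$ and $\mf c_{l_{\mc C}+1}$ in $H_{1}(\mb T_{\mc C};\mb Z)$ itself, which is Proposition (\ref{det})(iv); the ambient block plays no role. For the final sentence, however, ``comparing coordinates'' is too quick. When $r=1$ both images are subgroups of $\mb Z$, the boundary torus contributes all of $\mb Z$ (it contains the meridian $\mf c_{\mc S_{1}}$), and you need $\gcd\bigl(\nu_{D_{0}}(f\circ E),\nu_{D_{l_{\mc C}+1}}(f\circ E)\bigr)>1$, which does not follow from positivity of the coordinates alone; and for $r\geq 2$ a combination $a\,\nu(D_{0})+b\,\nu(D_{l_{\mc C}+1})$ with $a,b$ of opposite signs can have vanishing coordinates, so excluding $\mf c_{\mc S_{k}}$ from the image requires a genuine computation with (\ref{h1b}). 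A more robust route to non-parallelism, bypassing homology entirely, is that a boundary-parallel torus would cobound a thickened torus with a component of $\partial\mc M_{\eta}$, whereas each side of $\mb T_{\mc C}$ contains a block $B_{D}$ with $v(D)\geq 3$, which is not a thickened torus.
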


Les hypoth\`{e}ses du th\'{e}or\`{e}me (1.2.3) de \cite{LMW} \'{e}tant satisfaites, il vient :
\begin{cor}
La famille  $(\mb T_{\mc C})_{\mc C\in \mf C}$ est  une famille caract\'{e}ristique\footnote{i.e. une famille minimale de tores telle l'adh\'{e}rence de chaque composante connexe du compl\'{e}mentaire est une  vari\'{e}t\'{e} de Seifert ou atoro\"{\i}dale, cf. \cite{LMW} p. 144.} de tores essentiels\footnote{i.e. incompressible dans $\mc M_{\eta}$ et non-isotope \`{a} une composante de $\partial \mc M_\eta$.} de la 3-vari\'{e}t\'{e} $\mc M_{\eta}$ et d\'{e}termine sa d\'{e}composition de JSJ, qui est constitu\'{e}e de blocs de type Seifert.
\end{cor}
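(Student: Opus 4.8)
The plan is to check, one at a time, that $\mc M_\eta$ together with the tori $(\mb T_{\mc C})_{\mc C\in\mf C}$ meets the hypotheses of Theorem (1.2.3) of \cite{LMW}, and then to observe that the complementary pieces are all Seifert-fibered --- which, by the uniqueness statement there, forces $(\mb T_{\mc C})_{\mc C\in\mf C}$ to be \emph{the} characteristic family and every JSJ block to be of Seifert type. First I would record that $\mc M_\eta$ is a compact connected orientable $3$-manifold whose boundary is a disjoint union of tori, one for each irreducible component of $S$ (the tori cut off by the strict transform $\mc S$); connectedness is that of the germ $S$, and irreducibility (as well as $\partial$-irreducibility) is classical for exteriors of algebraic links --- it also follows from $\mc M_\eta$ being a $K(\pi,1)$, cf. \ref{EilenMacLane}, hence aspherical with $\pi_2=0$, via the Sphere Theorem.

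Next I would verify that the tori $\mb T_{\mc C}=\sigma_{\mc C}^{-1}(\mb T\times\{0\})$, $\mc C\in\mf C$, are pairwise disjoint --- they lie in the pairwise disjoint thickened tori $\wt{\mc M}_{\mc C}$ of \eqref{voistoreepaissi} --- and essential. Incompressibility and non-boundary-parallelism are exactly the content of Remark \ref{incompseifert}: because the two chain ends $D_0$ and $D_{l_{\mc C}+1}$ have valence $\ge 3$, $\mb T_{\mc C}$ is incompressible in each adjacent block $B_{D_j}$, hence in $\mc M_\eta$; and the image of $H_1(\mb T_{\mc C};\mb Z)$ in $H_1(\mc M_\eta;\mb Z)$, read off from the relations behind Corollary \ref{homologie}, differs from the images of the boundary tori, so no $\mb T_{\mc C}$ is isotopic into $\partial\mc M_\eta$ (nor, by the same homological bookkeeping, is any one isotopic to another). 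Then I would note that the closures of the connected components of $\mc M_\eta\setminus\bigcup_{\mc C}\mb T_{\mc C}$ are precisely the blocks $B_D$, $D\in\mf R$, each carrying the Seifert fibration $\wh\rho_D^{\mathrm{ext}}\colon B_D\to\wh K_D^{\mathrm{ext}}$ built in Corollary \ref{consfibrseifert} and \eqref{seifertetendue}, with exceptional fibres over the attaching points of the dead branches; so every complementary piece is Seifert.

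To pass from ``a splitting into Seifert pieces'' to ``the characteristic family'', I would finally check minimality: no $\mb T_{\mc C}$ can be removed while keeping the complement a union of Seifert (or atoroidal) pieces, because across $\mb T_{\mc C}$ the Seifert fibrations of $B_{D_0}$ and $B_{D_{l_{\mc C}+1}}$ have fibre classes $\mf c_0$ and $\mf c_{l_{\mc C}+1}$, which are $\mb Q$-independent in $H_1(\mb T_{\mc C};\mb Q)$ by Proposition \ref{det}(iv); an incompressible torus along which two adjacent Seifert pieces carry non-matching fibrations stays essential, hence is indispensable. By the uniqueness clause of \cite{LMW}~(1.2.3), this minimal family of essential tori is the characteristic one, unique up to isotopy, and since all its complementary pieces are Seifert, the JSJ decomposition of $\mc M_\eta$ it determines consists entirely of Seifert blocks.

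The step I expect to be the crux is the essentiality in the middle paragraph: incompressibility, and especially the facts that no $\mb T_{\mc C}$ is boundary-parallel and that distinct $\mb T_{\mc C}$ are non-isotopic, rely on the valence $\ge 3$ condition at the chain ends together with careful homological accounting inside $H_1(\mc M_\eta;\mb Z)$ via Corollary \ref{homologie} --- precisely what Remark \ref{incompseifert} is set up to deliver. One should also dispatch the degenerate cases ($\mf R=\emptyset$, so $\mc M_\eta$ is itself Seifert and the characteristic family is empty; or $\mf C=\emptyset$ with a single rupture component), checking that they agree with the statement; these are harmless but worth a word.
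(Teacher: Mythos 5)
Votre démonstration est correcte et suit exactement la même démarche que l'article, qui se contente d'invoquer le théorème (1.2.3) de \cite{LMW} après avoir établi dans la proposition \ref{det}, le corollaire \ref{consfibrseifert} et la remarque \ref{incompseifert} les faits que vous détaillez (structure de Seifert des blocs $B_D$, incompressibilité et essentialité des tores $\mb T_{\mc C}$, incompatibilité des fibrations adjacentes assurant la minimalité). Votre rédaction ne fait qu'expliciter ces vérifications que l'article laisse implicites.
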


\begin{obs}\label{grapheJSJ}
Les sommets de l'arbre de la d\'{e}composition de JSJ de $\mc M_{\eta}$ (correspondants aux blocs Seifert $B_{D}$) sont en correspondance bijective avec les composantes $D\in\mf R$ et ses ar\^{e}tes (joignant deux sommets correspondants a deux blocs Seifert adjacents) sont en correspondance bijective avec les cha\^{\i}nes $\mc C\in\mf C$.
\end{obs}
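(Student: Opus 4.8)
The plan is to read the statement off directly from the preceding corollary, which already pins down $(\mb T_{\mc C})_{\mc C\in\mf C}$ as \emph{the} characteristic family of essential tori realising the JSJ decomposition of $\mc M_\eta$ into Seifert pieces; what remains is only to translate that fact into the combinatorics of the dual tree $\mb A$. By definition the vertices of the JSJ graph are the JSJ pieces, i.e.\ the closures of the connected components of $\mc M_\eta\setminus\bigcup_{\mc C\in\mf C}\mb T_{\mc C}$, and its edges record, for each torus of the characteristic family, the pair of pieces meeting along it. So I would prove the two halves of the claimed bijection separately.

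First I would identify the vertices with $\mf R$. By construction --- this is precisely how $B_D$ was defined --- each connected component of $\mc M_\eta\setminus\bigcup_{\mc C}\mb T_{\mc C}$ has closure containing exactly one elementary block $\mc M_D$ with $D\in\mf R$, and distinct $D$ give distinct pieces. Moreover each $B_D$ is genuinely a Seifert piece and not a collar to be amalgamated into a neighbour, since it carries a Seifert fibration (\ref{consfibrseifert} together with the remark following it extending the fibration from $B_D^\flat$ to $B_D$) and the condition $v(D)\ge 3$ prevents it from being a thickened torus. Hence $D\mapsto B_D$ is a bijection from $\mf R$ onto the vertex set of the JSJ graph.

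Next I would identify the edges with $\mf C$. Fix $\mc C=\{D_0,\dots,D_{l_{\mc C}+1}\}\in\mf C$; removing $\mb T_{\mc C}$ from the thickened torus $\mc M_{\mc C}$ splits it into two collars, one attached to $B_{D_0}^\flat$ and the other to $B_{D_{l_{\mc C}+1}}^\flat$, so $\mb T_{\mc C}$ lies in $\partial B_{D_0}\cap\partial B_{D_{l_{\mc C}+1}}$ and contributes an edge joining $B_{D_0}$ to $B_{D_{l_{\mc C}+1}}$. Since $\mb A$ is a tree, $D_0\ne D_{l_{\mc C}+1}$, so this edge is not a loop, and distinct chains connect distinct pairs of rupture components, so there are no multiple edges. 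Injectivity of $\mc C\mapsto\mb T_{\mc C}$ follows because the regions $\mc M_{\mc C}$ have pairwise disjoint interiors and, by \ref{incompseifert}, the $\mb T_{\mc C}$ are pairwise non-isotopic and not isotopic into $\partial\mc M_\eta$; surjectivity onto the characteristic family is exactly the content of the preceding corollary. Hence $\mc C\mapsto\mb T_{\mc C}$ is a bijection from $\mf C$ onto the edge set of the JSJ graph.

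The only delicate point is the bookkeeping in the third paragraph: ensuring that cutting along $\bigcup_{\mc C}\mb T_{\mc C}$ yields exactly one edge per chain, with no self-loops and no spurious identifications of edges. All the genuine $3$-manifold input --- incompressibility, minimality, essentiality of the family, and incompatibility of the two Seifert fibrations across each $\mb T_{\mc C}$ --- has already been absorbed into \ref{incompseifert} and the preceding corollary, so beyond invoking the tree structure of $\mb A$ the argument is routine.
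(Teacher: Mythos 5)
Votre argument est correct et suit exactement la voie que le texte sous-entend : la remarque est donnée sans démonstration dans l'article, comme conséquence immédiate de la définition des blocs $B_D$ (un seul bloc élémentaire $\mc M_D$ avec $D\in\mf R$ par composante de $\mc M_\eta\setminus\bigcup_{\mc C}\mb T_{\mc C}$) et du corollaire précédent identifiant $(\mb T_{\mc C})_{\mc C\in\mf C}$ à la famille caractéristique. Votre mise au point combinatoire (pas de boucle ni d'arête multiple, grâce à la structure d'arbre de $\mb A$) est exactement le dépliage attendu de cette observation.
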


\subsection{Structures p\'{e}riph\'{e}riques et isomor\-phis\-mes g\'{e}om\'{e}triques}\label{est-per}

Pour chaque composante irr\'{e}ductible $ S_{k}$ de $ S$ consid\'{e}rons un voisinage tubulaire $ W_{k}$ de $S_{k}\cap (\mb B_{r}\setminus\inte{\mb B}_{s})$ avec $0<s<r\ll 1$  tel que les restrictions des fibrations $\rho_{\mc S_{k}}$ et $\rho_{D_{k}}$ \`{a} $\mc W_{k}:=E^{-1}(W_{k})$ soient des fibrations triviales,  $D_{k}\in\comp(\mc D)$ d\'{e}signant la composante d'attache de $\mc S_{k}$.
Le groupe fondamental $\mc P_{k}:=\pi_{1}(W_{k}^*)$ est isomorphe \`{a} $\mb Z\mf m_{k}\oplus\mb Z\mf p_{k}$ o\`{u}
$\mf m_{k}$ et $\mf p_{k}$ sont les bords orient\'{e}s d'une fibre de la restriction \`{a} $\mc W_{k}^*$ de $\rho_{\mc S_{k}}$ et $\rho_{D_{k}}$ respectivement. L'ab\'{e}lianit\'{e} de $\mc P_{k}$ permet de ne pas \`{a} expliciter un point de base dans $W_{k}^*$.
Remarquons que $\mf m_{k}$ est un g\'{e}n\'{e}rateur du noyau du morphisme $\pi_{1}(W_{k}^*)\to\pi_{1}(W_{k})$ induit par l'inclusion.
Soit $s=\mc S_{k}\cap D_{k}\in\sing(\mc D)$ le point d'attache de $\mc S_{k}$. Quitte \`{a} permuter les coordonn\'{e}es $(x_{s},y_{s})$ nous supposons que $x_{s}=0$ est une \'{e}quation de $\mc S_{k}$. Nous choisissons $\varepsilon _1$, $\varepsilon_2>0$ convenables, pour que $\mc W_{k}^*$ se r\'{e}tracte sur le $2$-tore $\{|x_{s}|=\varepsilon_1,\, |y_{s}|=\varepsilon_2\}$.
Les lacets $m$ et $p$ de $\mc W_{k}^*$ d\'{e}finis par $(x_{s},y_{s})\circ m(t)=(\varepsilon_1 e^{2i\pi t},\varepsilon_2)$ et $(x_{s},y_{s})\circ p(t)=(\varepsilon_1 ,\varepsilon_2 e^{2i\pi t})$ sont deux repr\'{e}sentants de $\mf m_{k}$ et $\mf p_{k}$ respectivement.

\begin{defin}
Nous appellerons $\mf m_{k}$ et $\mf p_{k}$ \emph{le m\'{e}ridien} et \emph{le parall\`{e}le} canonique de $S_{k}$.
\end{defin}

\begin{prop}\label{bord}
L'ensemble $W_{k}^*$ est incompressible dans $T_{\eta}^*$, i.e. le morphisme $i_{k}:\mc P_{k}\to\Gamma$ induit par l'inclusion $W_{k}^*\subset T_{\eta}^*$, qui s'explicite par $i_{k}(\mf m_{k})=\mf c_{S_{k}}$, $i_{k}(\mf p_{k})=\mf c_{D_{k}}$, est injectif.
\end{prop}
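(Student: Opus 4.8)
The plan is to exhibit an explicit deformation retraction of $T_\eta^*$ onto a subspace whose fundamental group visibly contains $\mathcal P_k$ as a subgroup, reducing the incompressibility statement to a purely combinatorial fact about the presentation of $\Gamma$ given in the previous subsection. First I would transport everything upstairs via $E$: since $E$ restricts to a homeomorphism outside $\mathcal E$, we have $W_k^*\cong \mathcal W_k^*$ and $T_\eta^*\cong\mathcal T_\eta^*$, and the inclusion $W_k^*\subset T_\eta^*$ corresponds to $\mathcal W_k^*\subset\mathcal T_\eta^*$. On the divisor side the picture is transparent: $\mathcal W_k^*$ retracts onto the $2$-torus $\{|x_s|=\varepsilon_1,\,|y_s|=\varepsilon_2\}$ sitting in $\Omega_s$, where $s=\mathcal S_k\cap D_k$, and by construction of the system local $\mathcal L$ this torus is one of the ``boundary tori'' of the JSJ picture of $\mathcal M_\eta$ from Subsection \ref{3.2} — it is a component of $\partial\mathcal M_\eta$ coming from the strict transform $\mathcal S_k$.

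The key step is then to compute the map $i_k$ on $\pi_1$. Using the meridian/parallel generators $\mf m_k,\mf p_k$ described just before the statement — represented by the loops $(x_s,y_s)\circ m(t)=(\varepsilon_1e^{2i\pi t},\varepsilon_2)$ and $(x_s,y_s)\circ p(t)=(\varepsilon_1,\varepsilon_2 e^{2i\pi t})$ — and the remark following the definition of $\mf c_D$ (which identifies $\mf c_D$, resp.\ $\mf c_{D'}$, at a singular point with the classes of $(x_s,y_s)=(e^{2i\pi t},1)$, resp.\ $(1,e^{2i\pi t})$), one reads off directly that $i_k(\mf m_k)=\mf c_{\mathcal S_k}$ and $i_k(\mf p_k)=\mf c_{D_k}$, after a basepoint path inside $\Omega_s$. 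So the claim becomes: the subgroup of $\Gamma$ generated by $\mf c_{\mathcal S_k}$ and $\mf c_{D_k}$ is free abelian of rank $2$, i.e.\ these two elements generate a $\mb Z^2$. Commutativity is immediate since $\mathcal S_k$ and $D_k$ are adjacent, so the relation $[\mf c_{\mathcal S_k},\mf c_{D_k}]=1$ is among the relations \eqref{relgamma}. For the absence of further relations I would abelianize: by Corollary \ref{homologie}, the image of $\mf c_{\mathcal S_k}$ in $H_1(\mathcal T_\eta^*;\mb Z)\cong\mb Z^r$ is the $k$-th basis vector, hence of infinite order and primitive; and the image of $\mf c_{D_k}$, being $-\big((\mathcal E,\mathcal E)^{-1}(\mathcal E,\mathcal S)\big)$ applied to the $\mf c_{\mathcal S_\ell}$'s with the relevant row, has a nonzero $k$-th coordinate (it equals the vanishing order $\nu_{D_k}(f_k\circ E)\ge 1$). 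So already in homology the two classes $\mf m_k\mapsto e_k$ and $\mf p_k\mapsto v$ with $v_k\ne 0$ span a rank-$2$ subgroup of $\mb Z^r$: any relation $a\,\mf m_k+b\,\mf p_k=0$ in $\mathcal P_k$ would push to $a\,e_k+b\,v=0$, forcing first $b=0$ (look at a coordinate $\ell\ne k$ where $v_\ell\ne 0$, or argue on the primitive direction) and then $a=0$. Hence $i_k$ is injective on $\mathcal P_k\cong\mb Z^2$.

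The main obstacle, and the only slightly delicate point, is the bookkeeping of basepoints and orientations: $\mathcal P_k$ is abelian so no basepoint need be fixed in $W_k^*$, but $\Gamma=\pi_1(\mathcal T_\eta^*,j(\mb A))$ has a basepoint on the dual tree, and one must choose a path from the torus $\{|x_s|=\varepsilon_1,|y_s|=\varepsilon_2\}$ to $j(\mb A)$ inside $\mathcal T_\eta^*(K_s)$ and check that conjugation by it sends the standard generators of $\pi_1$ of that torus precisely to $\mf c_{\mathcal S_k}^{\pm1}$ and $\mf c_{D_k}^{\pm1}$; the orientation conventions (``oriented as the boundary of a holomorphic curve'') must be matched so that the signs are $+1$. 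Alternatively, and perhaps more cleanly, one can avoid $\Gamma$ altogether: it suffices that $i_k$ be injective, and injectivity of a homomorphism can be tested after composing with $\Gamma\to\Gamma/[\Gamma,\Gamma]$ only when the source is abelian — which it is here — so the homological computation above is not merely a necessary condition but a complete proof. I would therefore structure the argument as: (1) reduce to the divisor side and to the retraction onto the coordinate torus; (2) identify $i_k(\mf m_k),i_k(\mf p_k)$ with $\mf c_{\mathcal S_k},\mf c_{D_k}$ up to conjugacy; (3) conclude injectivity from Corollary \ref{homologie} since $\mathcal P_k$ is abelian and its image in $H_1$ already has rank $2$.
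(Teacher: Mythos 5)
Your reduction to homology is logically sound as far as it goes (a homomorphism from an abelian group is injective as soon as its composite with abelianization is), and steps (1)--(2) of your plan are fine; but step (3) breaks down precisely in the case where the proposition still has real content, namely when $S$ is irreducible. If $r=1$ then $H_{1}(T_{\eta}^{*};\mathbb{Z})\cong\mathbb{Z}$ is generated by $\mathfrak{c}_{\mathcal{S}_{1}}$, and by Corollaire \ref{homologie} the image of $\mathfrak{p}_{1}=\mathfrak{c}_{D_{1}}$ is $\nu_{D_{1}}(f\circ E)\,\mathfrak{c}_{\mathcal{S}_{1}}$ with $\nu_{D_{1}}(f\circ E)\geq 1$: the images of $\mathfrak{m}_{1}$ and $\mathfrak{p}_{1}$ are proportional, they span a rank-one subgroup, and $\nu_{D_{1}}(f\circ E)\,\mathfrak{m}_{1}-\mathfrak{p}_{1}$ dies in homology. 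There is no coordinate $\ell\neq k$ to inspect and no ``primitive direction'' to appeal to, so the homological test is genuinely inconclusive; your argument proves the proposition only for $r\geq 2$. The paper records exactly this limitation in a footnote (``lorsque $S$ n'est pas irr\'eductible \dots on peut raisonner directement en homologie'').

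To cover the irreducible case you need a non-abelian input. The paper's proof brings in the Milnor fiber $F$ of $f$: $\pi_{1}(F)=\ker(f_{*}:\Gamma\to\mathbb{Z})$ is a free group incompressible in $T_{\eta}^{*}$, the group $\pi_{1}(F\cap W_{k}^{*})$ is infinite cyclic generated by the boundary curve $\mathfrak{b}_{k}$ with $i_{CW}(\mathfrak{b}_{k})=\mathfrak{p}_{k}-\nu_{k}\mathfrak{m}_{k}$, and applying $f_{*}$ shows that any element of $\ker i_{WT}$ must be a multiple of $\mathfrak{b}_{k}$; since $F\cap W_{k}^{*}\subset F\subset T_{\eta}^{*}$ induces injections on $\pi_{1}$, that multiple is trivial. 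You would need to supply an argument of this kind (or the iterated Van Kampen argument of \cite{M-M}, which the paper also mentions) to make your proof complete; as written, the case of one branch is a genuine gap.
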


\begin{dem}
Ceci peut se d\'{e}montrer
\footnote{Lorsque $S$ n'est pas irr\'{e}ductible, i.e. $r>1$, on peut raisonner directement en homologie, car alors $\pi_{1}(W_{k}^*)\cong H_{1}(W_{k}^*;\mb Z)\cong\mb Z^{2}\hookrightarrow\mb Z^{r}\cong H_{1}(T_{\eta}^*;\mb Z)$. Ce derni\`{e}re inclusion vient de (\ref{ord}) car $f_{\ell}\circ E$ s'annule sur $D_{k}$, pour tout $\ell=1,\ldots,r$.}
directement, par utilisation r\'{e}p\'{e}t\'{e}e du th\'{e}or\`{e}me de Van Kampen, comme nous l'avons d\'{e}j\`{a} fait en la construction d'un voisinage adapt\'{e} de $\mc D$ par ``assemblage bord \`{a} bord'' dans \cite{M-M}.
Nous allons pr\'{e}senter une autre preuve bas\'{e}e sur 
l'incompressibilit\'{e} dans $T_{\eta}^*$ de la fibre de Milnor\footnote{Elle r\'{e}sulte trivialement de la suite exacte d'homotopie de la fibration de Milnor.} $F$ d'une \'{e}quation r\'{e}duite $f$ de $S$. Notons par $i_{CW}, i_{WT}, i_{CF}, i_{FT}$ respectivement les morphismes au niveau des groupes fondamentaux, induits par les inclusions $F\cap W_{k}^*\subset W_{k}^*, W_{k}^* \subset T_{\eta}^*, F\cap W_{k}^*\subset F, F\subset T_{\eta^*}$.
Remarquons que $\pi_{1}(F)$ est le noyau du morphisme $f_{*}:\Gamma\to\mb Z$ qui envoie $\mf c_{D}$ sur la multiplicit\'{e}  $\nu_{D}(f\circ E)$ de $f\circ E$ le long de $D$. Notons  $\nu_{k}:=\nu_{D_k}(f\circ E)$.
Comme $f\circ E =x_{s}y_{s}^{\nu_{k}}$,
on a l'isomorphisme  $\pi_{1}(F\cap W_{k}^*)\cong\mb Z\mf b_{k}$, o\`{u} $\mf b_{k}$ d\'{e}signe la composante du bord (orient\'{e}) de $F$ contenue dans $W_{k}^*$  et $i_{CW}(\mf b_{k})=\mf p_{k}-\nu_{k}\mf m_{k}$.
D'autre part, si $\mf k=\alpha \mf p_{k}+\beta \mf m_{k}\in\pi_{1}(W_{k}^*)$ appartient au noyau de $i_{WT}$, alors $f_{*}(i_{WT}(\alpha \mf p_{k}+\beta \mf m_{k}))=\alpha\nu_{k}+\beta=0$; d'o\`{u} $\mf k=i_{CW}(\alpha \mf b_{k})$. Comme $i_{WT}\circ i_{CW}=i_{FT}\circ i_{CF}$, $i_{CF}$ et $i_{FT}$ sont injectives,  $\alpha=0$ et $i_{WT}$ est donc aussi injective.
\end{dem}

Dor\'{e}navant nous identifierons $\mc P_{k}$ a son image dans $\Gamma$, en prenant le point de base dans $W_{k}^*$. Si nous avons besoin de consid\'{e}rer plus d'un sous-groupe $\mc P_{k}$ \`{a} la fois, il nous faudra alors de consid\'{e}rer la famille de tous les conjugu\'{e}s de$\mc P_{k}$ dans $\Gamma$. Le r\'{e}sultat suivant pr\'{e}cise cette situation.

\begin{prop}
Le normalisateur de $\mc P_{k}$ dans $\Gamma$ est \'{e}gal \`{a} $\mc P_{k}$, i.e. si $\zeta\in\Gamma$ et $\zeta\mc P_{k}\zeta^{-1}\subset\mc P_{k}$ alors $\zeta\in\mc P_{k}$. En particulier, la d\'{e}composition $\mc P_{k}=\mb Z\,\mf m_{k}\oplus\mb Z\,\mf p_{k}$ est intrins\`{e}que
\footnote{i.e. la d\'{e}composition   $P=\mb Z\mf m_{\ssstyle P}\oplus\mb Z\mf p_{\ssstyle P}$ de tout sous-groupe conjugu\'{e} $P:=\zeta \mc P_k\zeta ^{-1}$ donn\'{e}e par $\mf m_{\ssstyle P}:=\zeta \mf m_k\zeta ^{-1}$ et $\mf p_{\ssstyle P}:=\zeta \mf p_k\zeta ^{-1}$, ne d\'{e}pend pas de $\zeta $.}
dans $\Gamma$.
\end{prop}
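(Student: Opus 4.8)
Le plan est de montrer que tout élément $\zeta$ normalisant $\mc P_k$ appartient en fait à $\mc P_k$, en exploitant la structure périphérique des variétés de Milnor établie dans les sections précédentes. L'argument repose sur le fait que $W_k^\ast$ se rétracte sur un $2$-tore $\mb T_k$ plongé dans $M_\eta$, et sur la nature ``essentielle'' de ce tore. Rappelons d'abord que $\mc P_k \cong \mb Z\mf m_k \oplus \mb Z\mf p_k$ est un sous-groupe abélien libre de rang $2$, incompressible dans $\Gamma$ d'après la proposition (\ref{bord}), donc $\mb T_k$ est un tore incompressible de $M_\eta$. De plus $\mb T_k$ est isotope à une composante de $\partial M_\eta$ (celle correspondant à l'entrelacs $S_k \cap \mb S^3_\varepsilon$), donc c'est un tore \emph{de bord} et non un tore essentiel intérieur : en particulier, il ne fait pas partie de la famille caractéristique $(\mb T_{\mc C})_{\mc C\in\mf C}$ de la décomposition de JSJ.

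La première étape est de réduire l'assertion à un énoncé sur les centralisateurs. Comme $\mc P_k$ est abélien, $\mc P_k$ est contenu dans son propre centralisateur $Z_\Gamma(\mc P_k)$, et si $\zeta \mc P_k \zeta^{-1} \subset \mc P_k$ alors par égalité des rangs (deux $\mb Z$-modules libres de rang $2$, l'un contenu dans l'autre, via un automorphisme) on a en réalité $\zeta \mc P_k \zeta^{-1} = \mc P_k$, donc $\zeta$ normalise $\mc P_k$. Il suffira alors de montrer que le normalisateur est égal au centralisateur, puis que le centralisateur est égal à $\mc P_k$ lui-même. Pour le centralisateur : un élément $\zeta$ centralisant $\mf m_k$ (qui est le méridien, générateur du noyau de $\pi_1(W_k^\ast)\to\pi_1(W_k)$, donc une classe périphérique non triviale) appartient nécessairement à un sous-groupe périphérique conjugué ; mais comme $\mf m_k$ n'est conjugué qu'à des éléments de $\mc P_k$ ``à l'étage $k$'', un argument de la théorie des groupes agissant sur les arbres (décomposition en graphe de groupes sous-jacente à la JSJ, cf. \cite{Serre}) force $\zeta$ à fixer le sommet-bord correspondant, donc $\zeta \in \mc P_k$.

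La deuxième étape, plus technique, consiste à établir que normalisateur $=$ centralisateur. Si $\zeta$ normalise $\mc P_k$, la conjugaison par $\zeta$ induit un automorphisme de $\mc P_k \cong \mb Z^2$, donc un élément de $\mr{GL}_2(\mb Z)$. Il faut voir que cet automorphisme est trivial. Pour cela on utilise que $\mf m_k$ est \emph{canoniquement caractérisé} comme l'unique (à signe près) classe primitive de $\mc P_k$ qui devient triviale dans $\pi_1(W_k) \cong \mb Z$, et aussi dans $\pi_1(T_\eta)$ si l'on préfère raisonner ambiant ; cette caractérisation intrinsèque est préservée par tout automorphisme induit par conjugaison ambiante, donc $\zeta \mf m_k \zeta^{-1} = \mf m_k^{\pm 1}$. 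Le signe est $+$ car on travaille avec des homéomorphismes préservant les orientations (convention adoptée dans l'excerpt). Pour le parallèle $\mf p_k$ : modulo $\mf m_k$, la classe $\mf p_k$ est déterminée par son image $\mf c_{D_k}$ dans $\Gamma$, et l'action de $\zeta$ par conjugaison sur $H_1$ est triviale (car $H_1(T_\eta^\ast;\mb Z)$ est engendré par les $\mf c_{\mc S_j}$ et l'automorphisme intérieur agit trivialement sur l'abélianisé) ; donc $\zeta \mf p_k \zeta^{-1} = \mf p_k \mf m_k^{\,a}$ pour un certain $a \in \mb Z$, et il reste à voir $a=0$, ce qui résulte à nouveau de l'incompressibilité combinée à la rigidité de la structure de Seifert du bloc $B_{D_k}$ adjacent (la fibre de Seifert de $B_{D_k}$ restreinte à $\mb T_k$ a une classe d'homologie fixée, à savoir $\mf p_k$, et doit être envoyée sur une fibre de Seifert, donc sur $\pm\mf p_k$).

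\textbf{Principal obstacle.} Le point délicat est le dernier : montrer $\zeta \mf p_k \zeta^{-1} = \mf p_k$ (et non seulement $\mf p_k$ modulo une puissance de $\mf m_k$). L'ingrédient-clé sera l'unicité, à isotopie près, de la fibration de Seifert du bloc de JSJ $B_{D_k}$ contenant la composante de bord correspondant à $S_k$ — unicité qui vient du fait que ce bloc n'est pas un fibré exceptionnel ($v(D_k) \geq 2$ et la base est une surface avec bord de caractéristique d'Euler négative, ou bien c'est un tore solide traité directement) — combinée au fait que la conjugaison par $\zeta$, réalisée géométriquement par un homéomorphisme de $M_\eta$ dans lui-même, préserve cette fibration canonique. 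C'est essentiellement le théorème d'unicité de Waldhausen–Johannson \cite{Waldhausen,Johannson} appliqué au voisinage de $\mb T_k$, qu'il faut invoquer avec soin compte tenu de l'orientation.
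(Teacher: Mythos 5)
Votre plan s'appuie sur la machinerie JSJ/Waldhausen, alors que la preuve du texte est purement alg\'ebrique; surtout, il comporte des lacunes qui ne sont pas de simples d\'etails. (1) La r\'eduction initiale \og $\zeta\mc P_k\zeta^{-1}\subset\mc P_k$ entra\^{\i}ne l'\'egalit\'e par \'egalit\'e des rangs \fg{} est fausse telle quelle : $\mb Z^2$ contient des sous-groupes propres de rang $2$ (d'indice fini $>1$). (2) Le c\oe ur de l'\'enonc\'e --- le fait que le centralisateur de $\mf m_k$ ne d\'eborde pas d'un conjugu\'e de $\mc P_k$ --- est affirm\'e via \og un argument de la th\'eorie des groupes agissant sur les arbres \fg{} sans aucune justification; c'est pr\'ecis\'ement ce qu'il faut d\'emontrer, et rien dans l'article ne le fournit cl\'e en main. (3) L'\'etape finale (tuer l'ambigu\"{\i}t\'e $\zeta\mf p_k\zeta^{-1}=\mf p_k\mf m_k^{a}$ par unicit\'e des fibrations de Seifert) est circulaire : la conjugaison par $\zeta$ est un automorphisme \emph{int\'erieur} de $\Gamma$, donc r\'ealis\'e par un hom\'eomorphisme librement homotope \`a l'identit\'e; l'unicit\'e de Waldhausen--Johannson appliqu\'ee \`a une telle application ne donne aucune contrainte sur $\zeta$. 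Enfin, l'argument homologique invoqu\'e pour $\mf p_k$ ne dit rien lorsque $r=1$, puisque $\mc P_k\to H_1(T_\eta^*)\cong\mb Z$ n'est alors pas injectif.

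La preuve du texte contourne tout cela en passant \`a la fibre de Milnor $F$. Comme $f_*(\mf m_k)=1$, on corrige $\zeta$ par une puissance de $\mf m_k$ pour obtenir $\zeta'\in\ker f_*=\pi_1(F)$, qui normalise encore $\mc P_k$. Alors $\zeta'\mf b_k\zeta'^{-1}\in\pi_1(F)\cap\mc P_k=\mb Z\,\mf b_k$, et le passage \`a l'homologie force $[\zeta',\mf b_k]=1$ \emph{dans le groupe libre} $\pi_1(F)$. Par le th\'eor\`eme de Schreier, $\langle\zeta',\mf b_k\rangle$ est libre et ab\'elien, donc monog\`ene; comme $\mf b_k$ n'est pas une puissance non triviale (c'est un \'el\'ement d'une base libre si $r>1$, et le produit cycliquement r\'eduit des commutateurs si $r=1$), on conclut $\zeta'\in\langle\mf b_k\rangle\subset\mc P_k$, d'o\`u $\zeta\in\mc P_k$. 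Si vous tenez \`a une preuve topologique, il vous faudrait \'etablir explicitement l'\'enonc\'e sur les centralisateurs d'\'el\'ements p\'eriph\'eriques dans les groupes de $3$-vari\'et\'es de Haken (Jaco--Shalen), ce qui est nettement plus lourd que l'argument \'el\'ementaire ci-dessus.
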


\begin{dem}
La preuve de la proposition pr\'{e}c\'{e}dente montre que $\pi_{1}(F)\cap\mc P_{k}=\pi_{1}(F\cap W_{k}^*)=\mb Z\mf b_{k}$.
Soit $\zeta':=\zeta\mf m_{k}^{\ell}$ avec $\ell:=f_{*}(\zeta)=\frac{1}{2i\pi}\int_{\zeta}E^*\left(\frac{df}{f}\right)$. Comme $f_{*}(\mf m_{k})=1$ il r\'{e}sulte que $f_{*}(\zeta')=0$ et donc $\zeta'\in\pi_{1}(F)$. Ainsi, $\zeta'\mf b_{k}\zeta'^{-1}\in\pi_{1}(F)\cap \zeta'\mc P_{k}\zeta'^{-1}=\pi_{1}(F)\cap\mc P_{k}=\mb Z\mf b_{k}$. D'o\`{u}  $\zeta'\mf b_{k}\zeta'^{-1}=\mf b_{k}^{n}$ pour un certain $n\in\mb Z$. En passant \`{a} l'homologie la derni\`{e}re \'{e}galit\'{e} on obtient que $n=1$ et donc $[\zeta',\mf b_{k}]=1$, que nous pouvons interpr\'{e}ter comme une relation dans le groupe libre $\pi_{1}(F)$. Comme le sous-groupe $\langle\zeta',\mf b_{k}\rangle$ de $\pi_{1}(F)$ est aussi libre d'apr\`{e}s le th\'{e}or\`{e}me classique de Schreier, nous en d\'{e}duisons qu'il est monog\`{e}ne : $\langle \zeta',\mf b_{k}\rangle=\langle \theta\rangle$ pour un certain $\theta\in\pi_{1}(F)=\langle u_{1},v_{1},\ldots,u_{g},v_{g},b_{1},\ldots,b_{r}| \prod\limits_{i=1}^{g}[u_{i},v_{i}]\prod\limits_{j=1}^{r}b_{j}=1\rangle$, o\`{u} $b_{j}\subset\partial F$. Nous pouvons supposer que $\mf b_{k}=b_{1}$. Il suffit de prouver que $b_{1}$ n'est pas une puissance non-triviale dans $\pi_{1}(F)$ car dans ce cas $\zeta'\in\langle\theta\rangle=\langle\mf b_{k}\rangle\subset\mc P_{k}$.
Si $r>1$ alors $b_{1}$ fait parti du syst\`{e}me libre de g\'{e}n\'{e}rateurs $u_{1},v_{1},\ldots,u_{g},v_{g},b_{1},\ldots,b_{r-1}$ de $\pi_{1}(F)$; il n'est donc pas une puissance non-triviale.
Si $r=1$ alors $b_{1}^{-1}=\prod\limits_{i=1}^g[u_{i},v_{i}]$ est un mot cycliquement r\'{e}duit dans le groupe libre $\pi_{1}(F)=\langle u_{1},v_{1},\ldots,u_{g},v_{g}|\,\rangle$; on voit  encore facilement, qu'il ne peut \^{e}tre  une puissance non-triviale.
\end{dem}

\begin{teo}\label{pk}
Soit $U$ un voisinage ouvert de $0$ dans $\mb B$ et $h$ un hom\'{e}o\-mor\-phisme\footnote{pr\'{e}servant  comme toujours les orientations.} de $U$ sur un voisinage $U'$ de $0$ dans $\mb B'$, tel que $h(S\cap U)=S'\cap U'$. Supposons que l'inclusion $U\subset \mb B$ induit un isomorphisme $\pi_{1}(U^*)\cong\Gamma$.
Alors, pour toute composante $S_{k}$ de $S$ l'isomorphisme $h_{*}:\Gamma\to\Gamma'$ induit par $h$ envoie $\mc P_{k}$ sur le sous-groupe $\mc P_{k}'$ associ\'{e} \`{a} la composante $S_{k}'=h(S_{k}\cap U)$ de $S'\cap U'$ et transforme m\'{e}ridien en m\'{e}ridien : $h_{*}(\mf m_{k})=\mf m_{k}'$.
\end{teo}

\begin{dem}
Consid\'{e}rons des voisinages tubulaires $W_{k}$ de $S_{k}\cap (\mb B_{r}\setminus\inte{\mb B}_{s})$ et $W_{k}''\subset W_{k}'$ de $S'_{k}\cap (\mb B'_{r'}\setminus\inte{\mb B'}_{s'})$ contenus dans $U$ et $U'$ respectivement tels que $W_{k}''\subset h(W_{k})\subset W_{k}'$ et $\mc P_{k}=\pi_{1}(W_{k}^*)$ et $\pi_{1}(W_{k}''^*)=\pi_{1}(W_{k}'^*)=\mc P_{k}'$ via l'inclusion $W_{k}''^*\subset W_{k}'^*$.
Ainsi $h_{*}(\mc P_{k})\subset\mc P_{k}'$ et le compos\'{e} $\mc P_{k}'\to h_{*}(\mc P_{k})\to \mc P_{k}'$ est un isomorphisme. Ainsi $h_{*}(\mc P_{k})=\mc P_{k}'$ et la restriction de $h_{*}$ \`{a} $\mc P_{k}\cong\mb Z^{2}$ est surjective sur $\mc P_{k}'\cong\mb Z^{2}$. Comme tout morphisme surjectif de $\mb Z^{2}$ sur lui-m\^{e}me est aussi injectif,  $h_{*}:\mc P_{k}\to\mc P_{k}'$ est un isomorphisme. De m\^{e}me $h_{*}:\pi_{1}(W_{k})\to\pi_{1}(W_{k'}) $ est aussi un isomorphisme. Ainsi $h_{*}$ conjugue les noyaux des morphismes induits par les inclusions $W_{k}^*\subset W_{k}$ et $W_{k}'^*\subset W_{k}'$ qui sont engendr\'{e}s par $\mf m_{k}$ et $\mf m_{k}'$ respectivement. On conclut que $h_{*}(\mf m_{k})=\mf m_{k}'^{\pm 1}$; mais l'exposant est \'{e}gal \`{a} $+1$, car $h$ pr\'{e}serve les orientations.
\end{dem}

Dans l'\'{e}nonc\'{e} du th\'{e}or\`{e}me pr\'{e}c\'{e}dent, $k$ \'{e}tant donn\'{e}, nous avions arbitrairement choisi dans $W_{k}$ et $W_{k}''$, les  points de base pour $\Gamma$ et $\Gamma'$ respectivement. Mais nous aimerions disposer d'une notion ind\'{e}pendante de ces choix, intrins\`{e}que \`{a} $h_{*}$.  Pour cela, revenons \`{a}  la d\'{e}finition d'\'{e}quivalence fondamentale introduite en~\ref{equivfond} et notons que  l'ambigu\"{\i}t\'{e}  de l'action $h_{*}:\Gamma\to\Gamma'$ est contr\^{o}l\'{e}e par la composition \`{a} droite et/ou \`{a} gauche de $h_\ast$ par des automorphismes int\'{e}rieurs. Cela nous conduit \`{a} introduire  la notion d'\emph{isomorphisme ext\'{e}rieur}, comme une classe d'\'{e}quivalence d'isomorphisme $\Gamma\to\Gamma'$ modulo composition par des automorphismes int\'{e}rieurs. Nous pouvons alors d\'{e}finir

\begin{defin}\label{isomgeom}
Nous disons qu'un isomorphisme ext\'{e}rieur $\varphi:\Gamma\to\Gamma'$ \emph{pr\'{e}serve les structures p\'{e}riph\'{e}riques} s'il envoie tous les sous-groupes conjugu\'{e}s des $\mc P_{k}$ sur des sous-groupes conjugu\'{e}s des $\mc P_{k'}'$.
L'isomorphisme $\varphi$ est dit \emph{g\'{e}om\'{e}trique} si de plus il envoie tous les conjugu\'{e}s des m\'{e}ridiens $\mf m_{k}$ sur des conjugu\'{e}s des m\'{e}ridiens $\mf m_{k'}'$.
\end{defin}

\begin{obs}
Le th\'{e}or\`{e}me \ref{pk} affirme que si $h:(U,S)\to (U',S')$ est un germe d'hom\'{e}omorphisme alors $h_{*}:\Gamma\to\Gamma'$ est un isomorphisme g\'{e}om\'{e}trique.
La premi\`{e}re moiti\'{e} de la preuve de \ref{pk} implique que si $h:U^*\to U'^*$ est un hom\'{e}omorphisme alors $h_{*}:\Gamma\to\Gamma'$ pr\'{e}serve la structure p\'{e}riph\'{e}rique; cependant il peut ne pas \^{e}tre g\'{e}om\'{e}trique, comme montre l'exemple suivant : $U=U'=\mb C^{2}$, $S=S'=\{xy=0\}$ et $h:\mb C^*\times\mb C^*\to\mb C^*\times\mb C^*$ est d\'{e}fini par $h(x,y)=(xy,y)$.
\end{obs}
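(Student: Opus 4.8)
The statement gathers three claims; I would handle them in turn. For the first, the plan is simply to read Theorem \ref{pk} in the language of Definition \ref{isomgeom}. If $h$ is a germ of homeomorphism of $(U,S)$ onto $(U',S')$, shrink $U$ so that $U\subset\mb B$ induces $\pi_{1}(U^{*})\cong\Gamma$; then Theorem \ref{pk} gives $h_{*}(\mc P_{k})=\mc P'_{k'}$ and $h_{*}(\mf m_{k})=\mf m'_{k'}$ for every branch $S_{k}$, with $S'_{k'}=h(S_{k}\cap U)$. The only indeterminacy in identifying the relevant fundamental groups with $\Gamma$ and $\Gamma'$ is composition of $h_{*}$, on either side, with inner automorphisms, and an inner automorphism merely permutes the conjugates of each $\mc P_{k}$ (resp. of each $\mf m_{k}$) among themselves; hence the exterior isomorphism determined by $h_{*}$ carries every conjugate of every $\mc P_{k}$ to a conjugate of some $\mc P'_{k'}$ and every conjugate of every $\mf m_{k}$ to a conjugate of some $\mf m'_{k'}$, which is exactly Definition \ref{isomgeom}.

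For the second claim I would isolate the step of the proof of Theorem \ref{pk} that produces $h_{*}(\mc P_{k})=\mc P'_{k'}$ and check that it uses $h$ only on the complements: the nested deleted tubular neighbourhoods $W_{k}''^{*}\subset h(W_{k}^{*})\subset W_{k}'^{*}$ can be obtained from the incompressibility of the $W_{k}^{*}$ in $T_{\eta}^{*}$ (Proposition \ref{bord}) together with the fact that a homeomorphism of the open manifolds $U^{*},U'^{*}$ matches their ends along the branches of $S$ and $S'$ — equivalently, that it is a homotopy equivalence of the aspherical Haken $3$--manifolds (the Milnor $3$--tubes) onto which they retract, and such an equivalence preserves the peripheral structure, by Waldhausen. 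What has no counterpart when $h$ is defined only on $U^{*}$ is the meridian argument of that proof: $\mf m_{k}$ is singled out inside $\mc P_{k}$ only as a generator of $\ker(\pi_{1}(W_{k}^{*})\to\pi_{1}(W_{k}))$, and filling $S$ back in, i.e. passing from $W_{k}^{*}$ to $W_{k}$, is exactly what a homeomorphism of complements is not required to do; so the induced exterior isomorphism preserves the peripheral structure but need not be geometric, as the third claim shows.

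The third claim I would settle by the explicit example the statement points to. With $U=U'=\mb C^{2}$ and $S=S'=\{xy=0\}$ we have $U^{*}=U'^{*}=\mb C^{*}\times\mb C^{*}$ and $\Gamma=\Gamma'\cong\mb Z^{2}$, freely generated by the classes $a$ and $b$ of the coordinate circles $t\mapsto(e^{2i\pi t},1)$ and $t\mapsto(1,e^{2i\pi t})$, which are the meridians $\mf m_{2}=\mf c_{S_{2}}$ and $\mf m_{1}=\mf c_{S_{1}}$ of the branches $S_{2}=\{x=0\}$ and $S_{1}=\{y=0\}$ (Corollary \ref{homologie}). As $\Gamma$ is abelian, conjugacy is trivial: the meridians of $S'$ are $\{\pm a,\pm b\}$, and one checks $\mc P_{1}=\mc P_{2}=\Gamma$, so the peripheral structure is preserved for free — consistently with the second claim, $h(x,y)=(xy,y)$ being a biholomorphism of $\mb C^{*}\times\mb C^{*}$ with inverse $(u,v)\mapsto(uv^{-1},v)$, hence a homeomorphism of $U^{*}$. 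A one-line computation gives $h_{*}(a)=a$, the loop $t\mapsto(e^{2i\pi t},1)$ being fixed by $h$, and $h_{*}(b)=a+b$, the loop $t\mapsto(1,e^{2i\pi t})$ being carried to $t\mapsto(e^{2i\pi t},e^{2i\pi t})$, of class $a+b$. Hence $h_{*}(\mf m_{1})=a+b\notin\{\pm a,\pm b\}$: $h_{*}$ sends $\mf m_{1}$ to no conjugate of a meridian of $S'$, so the exterior isomorphism it induces is not geometric although it preserves the peripheral structure. The one genuine obstacle in the whole statement is the claim, inside the second assertion, that the peripheral subgroups are preserved without the extension across $S$; it rests entirely on the $3$--manifold material of \S\ref{3.2} (incompressibility, the Jaco--Shalen--Johannson decomposition, Observation \ref{incompseifert}), which locates the tori linking $S$ topologically inside the Milnor $3$--tube. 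The rest is a direct reading of Theorem \ref{pk} with Definition \ref{isomgeom}, plus the computation above.
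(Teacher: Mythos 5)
Your handling of the three claims follows the paper's own (implicit) route: the remark carries no proof beyond the pointer to Theorem \ref{pk} and the explicit example, and both your reading of the first claim through Definition \ref{isomgeom} and your computation for the example ($\Gamma=\Gamma'\cong\mb Z^{2}$, $\mc P_{1}=\mc P_{2}=\Gamma$, $h_{*}(a)=a$, $h_{*}(\mf m_{1})=h_{*}(b)=a+b$, which is conjugate to no meridian since the group is abelian) are exactly the verifiable content and are correct.

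One sentence in your justification of the second claim is, however, wrong and should be removed: Waldhausen's theorem does \emph{not} say that a homotopy equivalence of aspherical Haken $3$--manifolds with boundary preserves the peripheral structure --- preservation of the peripheral structure is the \emph{hypothesis} of that theorem, not its conclusion. If homotopy equivalences did this automatically then, the Milnor tubes being $K(\pi,1)$'s, every abstract isomorphism $\Gamma\to\Gamma'$ would preserve the peripheral structure and the very distinction the remark is drawing would collapse. The only thing the remark actually leans on is the sandwich argument from the first half of the proof of \ref{pk}: the inclusions $W_{k}''^{*}\subset h(W_{k}^{*})\subset W_{k}'^{*}$ and the resulting surjectivity of $\mc P'_{k'}\to h_{*}(\mc P_{k})\to\mc P'_{k'}$ use $h$ only on the complements, whereas the meridian is singled out as a generator of $\ker\bigl(\pi_{1}(W_{k}^{*})\to\pi_{1}(W_{k})\bigr)$ and so genuinely requires filling $S$ back in --- a point you do make correctly. (In the example itself nothing further is needed: the peripheral structure is preserved for the trivial reason you give, namely $\mc P_{k}=\Gamma$.)
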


\noindent Rappelons ici un important r\'{e}sultat de F. Waldhausen \cite[Corollary 6.5]{Waldhausen} :

\begin{teo}
Soient $M$ et $M'$ des vari\'{e}t\'{e}s de dimension trois, irr\'{e}duc\-ti\-bles,  \`{a} bord incompressible et soit $\varphi:\pi_{1}(M)\to\pi_{1}(M')$ un isomorphisme pr\'{e}servant la structure p\'{e}riph\'{e}rique, i.e. pour toute composante connexe $F$ de $\partial M$, il existe une composante connexe $F'$ de $\partial M'$, telle que $\varphi(\pi_{1}(F))$soit conjugu\'{e} \`{a} $\pi_{1}(F')$. Alors il existe un hom\'{e}omorphisme $\phi:M\to M'$ induisant $\varphi $ en homotopie, i.e. $\varphi=\phi_{*}$.
\end{teo}

\begin{cor}\label{invgeo}
Si $\varphi:\Gamma\to\Gamma'$ est un isomorphisme qui pr\'{e}serve la structure p\'{e}riph\'{e}rique, alors il existe un hom\'{e}omorphisme $h:T_{\eta}^*\to T_{\eta}'^*$, tel que $h_{*}=\varphi : \pi _1(T_{\eta}^*)\to \pi _1(T_{\eta}'^*)$.
Si de plus $\varphi$ est g\'{e}om\'{e}trique, alors $h$ s'\'{e}tend en un hom\'{e}omorphisme de $T_{\eta}^*$ sur $T_{\eta}'^*$, tel que  $h(S)=S'$. Ainsi, tout isomorphisme g\'{e}om\'{e}trique est induit par un (unique) marquage.
\end{cor}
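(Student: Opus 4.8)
The plan is to read off the first homeomorphism from Waldhausen's theorem quoted just above, applied to the $3$-tubes $\mc M_{\eta}$ and $\mc M_{\eta'}'$ (equivalently $M_{\eta}$, $M_{\eta'}'$), and then, when $\varphi$ is geometric, to upgrade it to a homeomorphism of the $4$-tubes carrying $S$ onto $S'$ by putting it into a fiberwise normal form along the branches of $S$ and coning off near the singular point.

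First I would verify the hypotheses of Waldhausen's theorem for $M_{\eta}$ and $M_{\eta'}'$. Each is a compact orientable $3$-manifold --- orientable because it fibers over $\partial\mb D_{\eta}$ by $f$ with orientable fiber (a smooth complex curve) and holomorphic, hence orientation preserving, monodromy --- and by Remark~\ref{EilenMacLane} it is a $K(\pi,1)$, so $\pi_2=0$; being orientable, with non-empty boundary and not $\mb S^3$, it is therefore irreducible. Its boundary is a disjoint union of tori, one torus $T_k$ around each branch $S_k$, with $\pi_1(T_k)=\mc P_k$, and these are incompressible in $M_{\eta}\simeq T_{\eta}^*$ by Proposition~\ref{bord}. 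Now Definition~\ref{isomgeom} says precisely that $\varphi$ carries conjugates of the $\mc P_k$ onto conjugates of the $\mc P_{k'}'$; since the $\mc P_k$ (resp. $\mc P_{k'}'$) are exactly the fundamental groups of the boundary components of $M_{\eta}$ (resp. $M_{\eta'}'$), this is the peripheral hypothesis of Waldhausen's theorem, which thus yields a homeomorphism $\phi:M_{\eta}\iso M_{\eta'}'$ with $\phi_*=\varphi$. Composing with the product structures $\Theta^\flat$, $\Theta'^{\flat}$ of~\eqref{strproduit} and an orientation preserving homeomorphism $]0,\eta_0]\iso\, ]0,\eta_0']$ produces $h:T_{\eta}^*\iso T_{\eta'}'^*$ with $h_*=\varphi$, since $M_{\eta}\hookrightarrow T_{\eta}^*$ is a homotopy equivalence. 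This proves the first assertion.

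Assume now that $\varphi$ is geometric. Since $\phi$ is a homeomorphism of manifolds with boundary realizing $\varphi$, it maps $T_k$ onto the torus $T_{k'}'$ with $\pi_1=\mc P_{k'}'$; geometricity means $\phi_*(\mf m_k)$ is conjugate to $\mf m_{k'}'$ (with exponent $+1$, as $\phi$ preserves orientation), so on $T_k\cong\mb S^1\times\mb S^1$, the first factor carrying the meridian, $\phi$ is isotopic to a map respecting this product. Extending this isotopy to a collar and transporting through $\Theta^\flat$, $\Theta'^{\flat}$, one obtains a new $h$ which on the cusp neighbourhood $W_k^*\cong A\times(\mb D\setminus\{0\})$ of $S_k$ ($A$ an annulus, the $\mb D\setminus\{0\}$ factor in the meridian direction) is fiberwise over $A$ and orientation preserving on each punctured disk; such a map extends continuously across $S_k=A\times\{0\}$. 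This extends $h$ over $S\setminus\mb B_\delta$ for a small ball $\mb B_\delta$. Over the remaining piece I would invoke the conic structure of $(\mb B,S)$ and of $(\mb B',S')$: for $\delta$ small one has $T_{\eta}\cap\mb B_\delta=\mb B_\delta$, one may arrange $h$ to be radial near $0$, and then the homeomorphism already defined on $\partial\mb B_\delta\cap T_{\eta}$ sends $S\cap\partial\mb B_\delta$ onto $S'\cap\partial\mb B_{\delta'}'$ and cones off to a homeomorphism $\mb B_\delta\iso\mb B_{\delta'}'$ taking $S\cap\mb B_\delta$ onto $S'\cap\mb B_{\delta'}'$. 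Gluing, we get $h:T_{\eta}\iso T_{\eta'}'$ with $h(S)=S'$, preserving the complex orientations of $S$ and $S'$ because $h_*$ sends each meridian to a meridian with exponent $+1$; so $h$ represents a marking. Finally, if $h$ and $\tilde h$ both realize $\varphi$, their restrictions to the $K(\pi,1)$'s $T_{\eta}^*$, $T_{\eta'}'^*$ induce the same homomorphism up to conjugacy, hence are homotopic, so by Proposition~\ref{fondequivhomot} they define the same marking; this gives uniqueness.

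The step I expect to be the main obstacle is the extension across $S$ in the geometric case: Waldhausen's theorem delivers $\phi$ only up to homotopy and with no control near the boundary tori, so the real work is to isotope it into the fiberwise, meridian preserving normal form along each cusp neighbourhood $W_k^*$ and to dovetail this with the conic structure near $0$, so that the coning construction produces a genuine homeomorphism rather than a mere continuous surjection. By comparison, checking irreducibility and incompressibility, and tracking orientations, are routine.
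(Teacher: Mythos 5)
Your proposal is correct and follows essentially the same route as the paper: Waldhausen's theorem applied to $M_{\eta}$, $M_{\eta'}'$ (irreducible by Remark~\ref{EilenMacLane}, incompressible boundary by Proposition~\ref{bord}) gives $\phi$, the product structure~\eqref{strproduit} yields $h$ on $T_{\eta}^*$, and in the geometric case the meridian condition plus the conic structure of $(\mb B,S)$, $(\mb B',S')$ allows the extension across $S$. You merely spell out in more detail the fiberwise normal form along the cusp neighbourhoods and the uniqueness via Proposition~\ref{fondequivhomot}, which the paper leaves implicit.
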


\begin{dem}
Nous  pouvons appliquer le th\'{e}or\`{e}me de Waldhausen \`{a} l'isomorphisme $\varphi:\Gamma\cong\pi_{1}(M_{\eta})\to\pi_{1}(M_{\eta}')\cong\Gamma'$, car $M_{\eta}$ et $M_{\eta'}$ sont irr\'{e}ductibles, d'apr\`{e}s la remarque \ref{EilenMacLane} et  \`{a} bord incompressible, gr\^{a}ce \`{a} la proposition~\ref{bord}.
Il existe donc un hom\'{e}omorphisme $\phi:M_{\eta}\to M_{\eta'}$, qui s'\'{e}tend trivialement en un hom\'{e}omorphisme $h:T_{\eta}^*\to T_{\eta'}^*$, via les structures produit $T_{\eta}^*\cong M_{\eta}\times]0,\eta]$ et
 $T_{\eta}'^*\cong M_{\eta}'\times]0,\eta]$ donn\'{e}es par  (\ref{strproduit}).
D'autre part, si $\varphi$ conjugue les m\'{e}ridiens des tores du bord de $M_{\eta}$ et $M_{\eta}'$, alors $\phi$ s'\'{e}tend en un hom\'{e}omorphisme de $T_{\eta}\cap\partial\mb B$ sur $T_{\eta}'\cap\partial\mb B'$. En utilisant la structure conique de $S$ et $S'$, il est facile d'\'{e}tendre $\phi$ en un hom\'{e}omorphisme  entre les paires $h:(T_{\eta},S)\to (T_{\eta}',S')$.
\end{dem}

\section{D\'{e}monstration du th\'{e}or\`{e}me principal}\label{4}

\'{E}tant donn\'{e} l'hom\'{e}omorphisme $h:\mb B_{\varepsilon}\iso h(\mb B_{\varepsilon})\subset\mb B'$ tel que $h(S\cap\mb B_{\varepsilon})=S'\cap h(\mb B_{\varepsilon})$, dans
la premi\`{e}re section de ce chapitre nous construisons une application $\breve h_1$ de $M_\eta$ sur $M'_{\eta'}$, pour $0<\eta \ll \eta'\ll 1$, qui est fondamentalement \'{e}quivalente \`{a} $h$. Gr\^{a}ce aux r\'{e}sultats de Waldhausen, nous modifierons cette application par une homotopie, afin d'obtenir un hom\'{e}omorphisme $h_{2}$ entre les $3$-tubes de Milnor.

Dans la section suivante, en utilisant les r\'{e}sultats classiques  de Jaco-Shalen-Johannson, nous isotopons $h_{2}$ \`{a} un nouvel hom\'{e}omorphisme $h_{3}$ qui pr\'{e}serve des r\'{e}alisations tr\`{e}s pr\'{e}cises de la d\'{e}composition JSJ des $3$-tubes de Milnor.

Ensuite \`{a} la section \ref{4.3}, nous construisons un isomorphisme explicite entre les arbres duaux des d\'{e}singularisations minimales de $S$ et de $S'$.

Celui-ci nous permet, \`{a} la section suivante,
d'\'{e}tendre $h_{3}$ aux $4$-tubes de Milnor. Cette extension \`{a} la dimension quatre se fait faite en quatre \'{e}tapes :
Dans la premi\`{e}re, nous ne nous occupons que des blocs $\mc T_{\eta}(D)$ associ\'{e}s aux composantes de valence $\ge 3$.
Dans la deuxi\`{e}me \'{e}tape, nous traitons le cas des cha\^{\i}nes $\mc C$ de composantes de valence $2$, en utilisant la structure produit des blocs $\mc T_{\eta}(\mc C)$.
\`{A} l'\'{e}tape suivante, nous consid\'{e}rons le cas des branches mortes et celui des transform\'{e}es strictes des s\'{e}paratrices.
Finalement \`{a} la derni\`{e}re \'{e}tape,
nous modifierons l'hom\'{e}omorphisme construit, \`{a} l'aide d'isotopies bien choisies,  afin d'assurer qu'il est fondamentalement \'{e}quivalent \`{a} l'hom\'{e}omorphisme $h$ initial.

\subsection{R\'{e}duction \`{a} la dimension trois}\label{reductrois}
Fixons des r\'{e}els positifs $r<\varepsilon$ et $\varepsilon''<\varepsilon '$, tels que
\begin{equation}\label{inclusoignons}
\mb B'_{\varepsilon''}\varsubsetneq h(\mb B_r)\varsubsetneq\mb B'_{\varepsilon'}\varsubsetneq h(\mb B_\varepsilon)\subset \mb B'{}^\ast\,.
\end{equation}
Munissons le couple $(\mb B, S)$
d'une \emph{structure conique}, c'est \`{a} dire d'un diff\'{e}o\-mor\-phisme $\varphi : \partial \mb B\times [0,1]\to \mb B$ en dehors de l'origine, v\'{e}rifiant : $\varphi(\partial \mb B\times \{r\})= \partial \mb B_r$, pour tout $r\in ]0,1]$,  $\varphi((S\cap \partial\mb  B)\times [0,1])=S $ et $\varphi(m,0)=0$, $\varphi(m,1)=m$, pour tout $m\in \partial \mb B$. Nous disposons aussi d'une structure conique $\varphi' : \partial \mb B'\times[0,1]\to \mb B'$, pour le couple $(\mb B',S')$.
Notons $\varrho_{{}_0} : \mb B\to \mb B_\varepsilon$ la r\'{e}traction par d\'{e}formation qui  correspond, via $\varphi $, \`{a} l'application valant $(m,t)\mapsto (m,\varepsilon)$, pour $\varepsilon\leq t\leq 1$.
Notons aussi $\sigma '_0 : (\mb B'\setminus \{0\})\to \partial\mb B'$ la r\'{e}traction par d\'{e}formation correspondant, via $\varphi'$, \`{a} $(m,s)\mapsto (m,1)$. D\'{e}signons enfin par  $\sigma ': \mb B'\to \mb B'$,  l'application continue   correspondant, via $\varphi '$, \`{a} l'application $(m,t)\mapsto (m,\varsigma(t))$, o\`{u} $\varsigma(t)$ est   affine pour $\varepsilon''\leq t\leq \varepsilon '$ et v\'{e}rifie $\varsigma (t)=t$, pour $ t\leq \varepsilon''$ et $\varsigma(t)=1$, pour $t\geq \varepsilon'$.
Visiblement nous avons:
$$
\varrho^{-1}_{{}_0}(S)=S\,,\quad \sigma '(S')=S'\,,\quad \sigma '{}^{-1}(S')=S'\,,\quad \sigma '{}^{-1}(S')=S'\,.
$$
Notons aussi que $\varrho_{{}_0}$ et $\sigma '$ sont l'identit\'{e} au voisinage de l'origine et que $\sigma '$ co\"{\i}ncide avec $\sigma '_0$ en dehors de $\mb B_{\varepsilon'}'$.
Posons :
\begin{equation}\label{defh1}
h_1:=\sigma '\circ h\circ \varrho_{{}_0} \; :\; \mb B\longrightarrow \mb B'\,.
\end{equation}
Cette application est continue, n\'{e}cessairement surjective et   v\'{e}rifie $h_1(\partial \mb B)=\partial \mb B '$, $h_1(S)=S'$ et $h_1^{-1}(S')=S$. Elle d\'{e}finit ainsi une application de $\mb B^\ast$ dans $\mb B'{}^\ast$. D'autre part $h_1$ co\"{\i}ncide avec $h$ au voisinage de l'origine et donc les restrictions de $h$ et de $h_1$ \`{a} $\mb B^\ast_{\varepsilon}$, que nous d\'{e}signerons encore par $h$ et $h_1$, sont fondamentalement \'{e}quivalentes : $h\asymp h_1$.

Fixons maintenant des 4-tubes de Milnor $T_\eta\subset \mb B$ pour $S$ et $T'_{\eta'}\subset \mb B'$ pour $S'$, tels que $h_1(T_\eta) \subset T'_{\eta'}$. D\'{e}signons par $r: T_\eta^\ast\to M_\eta$ la r\'{e}traction par d\'{e}formation sur le 3-tube de Milnor  (\ref{Meta}),  donn\'{e}e par la structure produit  d\'{e}crite en (\ref{retractT}) et notons $r' : T'^\ast_{\eta'} \to M_{\eta'}':=f'{}^{-1}(\partial\mb D_{\eta'})\cap \mb B'$ la  r\'{e}traction similaire. Posons :
\begin{equation}\label{MetTeta}
    \breve h_1 := r'\circ h_1\circ \iota_{M_\eta} : M_\eta\to M'_{\eta'}
     \,,
\end{equation}
o\`{u} $\iota_{M_\eta} :M_\eta\hookrightarrow T_\eta^\ast$ d\'{e}signe l'application d'inclusion.
\begin{obs}
On a  :
$
h\asymp h_1 \asymp \breve h_1 
 .$
\end{obs}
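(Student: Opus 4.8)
L'\'{e}quivalence $h\asymp h_1$ \'{e}tant d\'{e}j\`{a} acquise ($h_1$ co\"{\i}ncidant avec $h$ au voisinage de l'origine, comme not\'{e} plus haut), il ne restera, par transitivit\'{e} de $\asymp$, qu'\`{a} \'{e}tablir $h_1\asymp\breve h_1$. Le plan est d'appliquer directement la d\'{e}finition~\ref{equivfond}. On commencera par observer que $\breve h_1$ d\'{e}finit bien un \'{e}l\'{e}ment de $\mf C(\mb B^\ast,\mb B'{}^\ast)$ : son domaine $M_\eta$ est une vari\'{e}t\'{e} connexe, donc connexe par arcs, contenue dans $\mb B^\ast$ ; l'inclusion $M_\eta\hookrightarrow\mb B^\ast$ induit un isomorphisme au niveau des groupes fondamentaux, $M_\eta$ \'{e}tant un r\'{e}tract par d\'{e}formation de $\mb B^\ast$ (cf. (\ref{strproduit}) et la remarque~\ref{EilenMacLane}) ; et $\breve h_1$ prend ses valeurs dans $M'_{\eta'}\subset\mb B'{}^\ast$. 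De m\^{e}me $h_1$, de domaine $\mb B^\ast$ tout entier, est un \'{e}l\'{e}ment de $\mf C(\mb B^\ast,\mb B'{}^\ast)$ puisque $h_1^{-1}(S')=S$.

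D'apr\`{e}s la remarque qui suit la d\'{e}finition~\ref{equivfond}, il suffira d'exhiber \emph{un seul} couple de chemins $(\alpha,\alpha')$ v\'{e}rifiant l'\'{e}galit\'{e} (\ref{FG}). Je fixe un point $p\in M_\eta$. Je prends pour $\alpha$ le chemin \emph{constant} en $p$, vu comme chemin de $\mb B^\ast$ reliant $p$ (consid\'{e}r\'{e} dans le domaine $\mb B^\ast$ de $h_1$) au m\^{e}me point $p$ (consid\'{e}r\'{e} dans le domaine $M_\eta$ de $\breve h_1$). Je prends pour $\alpha'$ le chemin $t\mapsto r'_t(h_1(p))$, o\`{u} $(r'_t)_{t\in[0,1]}$ d\'{e}signe l'homotopie r\'{e}alisant la r\'{e}traction par d\'{e}formation $r':T'^{\ast}_{\eta'}\to M'_{\eta'}$ donn\'{e}e par la structure produit (analogue de celle de la proposition~\ref{retractT}), normalis\'{e}e de sorte que $r'_0=\id$ et que $r'_1$ soit \'{e}gale \`{a} $r'$ suivie de l'inclusion $M'_{\eta'}\hookrightarrow T'^{\ast}_{\eta'}$. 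Ce chemin $\alpha'$, trac\'{e} dans $T'^{\ast}_{\eta'}\subset\mb B'{}^\ast$, relie $h_1(p)$ \`{a} $r'(h_1(p))=\breve h_1(p)$.

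Comme $\alpha_*=\id$, l'\'{e}galit\'{e} (\ref{FG}) \`{a} v\'{e}rifier se r\'{e}duit \`{a} $\alpha'_*\circ\underline{h_1}_*=\underline{\breve h_1}_*$. Pour l'obtenir, j'introduis l'homotopie $H:M_\eta\times[0,1]\to\mb B'{}^\ast$, $H(m,t):=r'_t(h_1(m))$, qui relie $H_0=h_1\circ i_{M_\eta}$ (o\`{u} $i_{M_\eta}:M_\eta\hookrightarrow\mb B^\ast$ est l'inclusion) \`{a} $H_1=\breve h_1$, et dont le chemin trac\'{e} sur le point de base $p$ est pr\'{e}cis\'{e}ment $\alpha'$. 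Le lemme classique reliant une homotopie libre au changement de point de base fournit alors l'\'{e}galit\'{e} $\alpha'_*\circ(h_1\circ i_{M_\eta})_*=(\breve h_1)_*$ entre morphismes $\pi_1(M_\eta,p)\to\pi_1(\mb B'{}^\ast,\breve h_1(p))$. Puisque $(h_1\circ i_{M_\eta})_*=(h_1)_*\circ(i_{M_\eta})_*$ avec $(i_{M_\eta})_*$ inversible, il vient $\alpha'_*\circ(h_1)_*=(\breve h_1)_*\circ(i_{M_\eta})_*^{-1}$ ; et comme $\underline{h_1}_*=(h_1)_*$ (le domaine de $h_1$ \'{e}tant $\mb B^\ast$ tout entier) et $\underline{\breve h_1}_*=(\breve h_1)_*\circ(i_{M_\eta})_*^{-1}$, on conclut $\alpha'_*\circ\underline{h_1}_*=\underline{\breve h_1}_*$, d'o\`{u} $h_1\asymp\breve h_1$, puis $h\asymp h_1\asymp\breve h_1$.

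Il n'y a pas ici d'obstacle s\'{e}rieux : tous les ingr\'{e}dients topologiques sont d\'{e}j\`{a} disponibles -\`{a} savoir que $M_\eta$ et $T_\eta^\ast$ (resp. $M'_{\eta'}$ et $T'^{\ast}_{\eta'}$) se r\'{e}tractent par d\'{e}formation sur $\mb B^\ast$ (resp. $\mb B'{}^\ast$), cf. (\ref{strproduit}) et la remarque~\ref{EilenMacLane}- et le point technique se ram\`{e}ne au lemme standard d\'{e}crivant l'action d'une homotopie sur les groupes fondamentaux \`{a} l'aide du chemin trac\'{e} par le point de base. La seule vigilance requise, et donc le principal ``obstacle'', consiste \`{a} suivre fid\`{e}lement les points de base et les isomorphismes canoniques induits par les r\'{e}tractions, afin de coller exactement \`{a} la d\'{e}finition~\ref{equivfond} de l'\'{e}quivalence fondamentale.
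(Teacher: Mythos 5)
Votre preuve est correcte et constitue exactement la v\'erification de routine que le papier laisse implicite (la remarque y est \'enonc\'ee sans d\'emonstration) : l'\'equivalence $h_1\asymp\breve h_1$ se r\'eduit bien, via la remarque suivant la d\'efinition~\ref{equivfond}, au lemme standard de changement de point de base appliqu\'e \`a l'homotopie $r'_t\circ h_1\circ\iota_{M_\eta}$, le chemin $\alpha'$ \'etant la trace du point de base sous la r\'etraction. Seule coquille, sans incidence sur l'argument : dans votre dernier paragraphe la direction des r\'etractions est invers\'ee (c'est $\mb B^\ast$ qui se r\'etracte par d\'eformation sur $M_\eta$ et $T_\eta^\ast$, et non l'inverse), alors que le corps de la preuve l'\'enonce correctement.
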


 Quitte \`{a} multiplier l'\'{e}quation $f'$ par $\frac{\eta}{\eta'}$, nous supposons que $\eta=\eta'$.
Nous identifions d\'{e}sormais $T_\eta$ \`{a} $\mc T_\eta$, $T'_\eta$ \`{a} $\mc T'_\eta$ et nous continuons \`{a} noter $\breve h_1 $  l'application
$E'{}^{-1}\circ \breve h_1\circ E$ d\'{e}finie sur $\mc M_\eta:=E^{-1}(M_\eta)$ et \`{a} valeurs dans $\mc M'_{\eta'}:=E'{}^{-1}(M'_{\eta'})$. Celle-ci satisfait les hypoth\`{e}ses du th\'{e}or\`{e}me (6.1) de Waldhausen  \cite{Waldhausen}, car $\breve h_{1}(\partial \mc M_{\eta})\subset\partial\mc M'_{\eta'}$. Comme $\mc M_\eta$ n'est pas l'espace total d'un fibr\'{e} en droites sur une surface de Riemann close, il existe une homotopie $F :\mc M_\eta \times[0,1] \to \mc M'_{\eta'}$ v\'{e}rifiant $F(\partial \mc M_\eta\times[0,1])\subset \partial\mc M'_{\eta'}$,   $F(\cdot,0)=\breve h_1$ et telle que $F(\cdot,1)$ soit un hom\'{e}omorphisme. Nous posons
\begin{equation}\label{hdeuxM}
    h_2:= F(\cdot, 1) : \mc M_\eta \iso \mc M_\eta'\,.
\end{equation}
\begin{obs} La relation $h_2\asymp \breve  h_1$ est satisfaite.
\end{obs}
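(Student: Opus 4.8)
The plan is to reduce this to the elementary fact that two maps related by a homotopy with values in $\mb B'{}^\ast$ are automatically fundamentally equivalent — the same principle that underlies Proposition~\ref{fondequivhomot}. By construction $h_2=F(\cdot,1)$ and $\breve h_1=F(\cdot,0)$ are joined inside $\mc M'_{\eta'}$ by the homotopy $F$ produced via Waldhausen's theorem. Pushing $F$ down through the homeomorphisms $E|_{\mc M_\eta}:\mc M_\eta\iso M_\eta$ and $E'|_{\mc M'_{\eta'}}:\mc M'_{\eta'}\iso M'_{\eta'}$ — and recalling the running identifications $\mc T_\eta=T_\eta$, $\mc T'_\eta=T'_\eta$ — one obtains a homotopy $\widehat F:M_\eta\times[0,1]\to M'_{\eta'}$ from $\breve h_1$ to $E'\circ h_2\circ E^{-1}$, the latter being the representative of $h_2$ in $\mf C(\mb B^\ast,\mb B'{}^\ast)$ under these identifications.

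First I would check that $\breve h_1$ and $h_2$ really are elements of $\mf C(\mb B^\ast,\mb B'{}^\ast)$ in the sense preceding Definition~\ref{equivfond}: $M_\eta$ is arcwise connected, it is a deformation retract of $T_\eta^\ast$ by~(\ref{strproduit}), and $T_\eta^\ast\hookrightarrow\mb B^\ast$ induces an isomorphism on $\pi_1$ by Remark~\ref{EilenMacLane}; hence the inclusion $M_\eta\hookrightarrow\mb B^\ast$ is a $\pi_1$-isomorphism and the decorated maps $\underline{(\cdot)}_\ast$ on $\pi_1(\mb B^\ast)$ are well defined. Since moreover $M'_{\eta'}\subset\mb B'{}^\ast$, both maps are admissible and the relation $\asymp$ applies to them.

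Finally, by the remark following Definition~\ref{equivfond} it suffices to exhibit a single pair of paths $(\alpha,\alpha')$ satisfying~(\ref{FG}). I pick a base point $p\in M_\eta$, take $\alpha$ to be the constant path at $p$, so that $\alpha_\ast=\id$, and set $\alpha'(t):=\widehat F(p,t)$, a path in $M'_{\eta'}\subset\mb B'{}^\ast$ running from $\breve h_1(p)$ to $h_2(p)$. The standard homotopy identity yields $(h_2)_\ast=\alpha'_\ast\circ(\breve h_1)_\ast$ on $\pi_1(M_\eta,p)$; composing on the right with $i_{M_\eta\ast}^{-1}$ gives $\alpha'_\ast\circ\underline{(\breve h_1)}_\ast=\underline{(h_2)}_\ast$, which is precisely~(\ref{FG}) for the pair $(\alpha,\alpha')$ with $F=\breve h_1$ and $G=h_2$. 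Hence $\breve h_1\asymp h_2$, i.e. $h_2\asymp\breve h_1$. No step presents a genuine obstacle; the only care needed is the bookkeeping of the identifications $E,E'$ and $\mc T_\eta=T_\eta$ together with the verification that $M_\eta$ is an admissible domain — the homotopy-theoretic content being already implicit in the $K(\pi,1)$ discussion of the paper.
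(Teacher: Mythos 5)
Your proof is correct and is exactly the argument the paper leaves implicit (the remark is stated without proof): the Waldhausen homotopy $F$ joins $\breve h_1$ to $h_2$ inside $\mc M'_{\eta'}\subset\mb B'{}^\ast$, and a single pair $(\alpha,\alpha')$ with $\alpha$ constant and $\alpha'(t)=F(p,t)$ verifies (\ref{FG}) via the standard homotopy identity, the admissibility of $M_\eta$ being guaranteed by (\ref{strproduit}) and Remark \ref{EilenMacLane}. Nothing to add.
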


\subsection{Construction d'un hom\'{e}omorphisme  JSJ-compatible}
\label{extvaltrois}
Consi\-d\'{e}\-rons
maintenant pour $\mc M_{\eta'}'$, la d\'{e}composition de JSJ    similaire \`{a} celle effectu\'{e}e pour $\mc M_{\eta}$ : nous conservons les notations (\ref{blocselem}) pour les blocs \'{e}l\'{e}men\-tai\-res de $\mc M_{\eta'}'$;
  Nous d\'{e}signons par $\mf R'$ la collection des composantes de $\mc D'$ de valence $\geq 3$ et par $\mf C'$  celle des cha\^{\i}nes de  composantes de $\mc D'$ reliant deux \'{e}l\'{e}ments de $\mf R'$; pour chaque $\mc C'\in \mf C'$, les tores \'{e}paissi $\mc M_{\mc C'}'$ et $\wt{\mc M}_{\mc C'}'$, ainsi que leur structure produit  $\sigma '_{\mc C'} : \wt{\mc M}_{\mc C'}'\iso \mb T\times[-1-\epsilon,+1+\epsilon]$ sont construits comme en (\ref{voistoreepaissi}); le $2$-tore  $\mb T_{\mc C'}'=\sigma_{\mc C'}'^{-1}(\mb T\times\{0\})$ est proprement plong\'{e} dans $\mc M_{\eta'}'$
et les adh\'{e}rences des composantes connexes de $\mc M'_{\eta'}\setminus \cup_{\mc C'\in \mf C'}\mb T_{\mc C'}'$ constituent les \emph{ blocs de JSJ de} $\mc M_{\eta'}'$; chacun d'eux est not\'{e} $B'_{D'}$, car il contient un unique   bloc \'{e}l\'{e}mentaire $\mc M'_{D'}$,  $v(D')\geq 3$; une fibration de Seifert \'{e}tendue $\wh{\rho}'{}^{\rm ext} : B'_{D'}\to \wh K_{D'}'{}^{\rm ext}$, d\'{e}finie comme en (\ref{seifertetendue}), prolonge la fibration de Hopf $\rho '_D : \mc M_{D'}'\to K'_{D'}$; enfin la famille $(\mb T'_{\mc C'})_{\mc C'\in \mf C'}$ est, pour les m\^{e}mes raisons, une famille caract\'{e}ristique  de tores essentiels proprement plong\'{e}s dans $\mc M_{\eta'}'$.

Visiblement $(\mb T_{\mc C})_{\mc C\in \mf C}$ et $(h_2^{-1}(\mb T_{\mc C'}'))_{\mc C'\in \mf C'}$ sont deux familles carac\-t\'{e}\-ris\-tiques de tores essentiels de $\mc M_\eta$. D'apr\`{e}s le th\'{e}or\`{e}me d'unicit\'{e} des familles carac\-t\'{e}\-ris\-tiques, cf. (1.2.6) de \cite{LMW}, il existe une bijection
 \begin{equation}\label{corespcaines}
 \kappa _2 :\mf C\iso \mf C'
\end{equation}
et un automorphisme $\psi$ de $\mc M_{\eta}$ isotope \`{a} l'identit\'{e}, tels que $h_2(\psi(\mb T_{\mc C}))=\mb T'_{\kappa _2(\mc C)}$, pour tout $\mc C\in \mf C$. Posons $\wt h_2:= h_2\circ\psi$, nous avons :
\begin{equation}\label{hdeuxtilde}
  \wt h_2\asymp h_2\asymp h\quad  \hbox{\rm et}\quad \wt h_2(\mb T_{\mc C})=\mb T'_{\kappa _2(\mc C)}\,,\quad\textrm{pour tout}\quad\mc C\in \mf C\,.
\end{equation}
\begin{obs}
Visiblement $\wt h_2$ transforme tout bloc de JSJ de $\mc M_\eta$ en un bloc de JSJ de $\mc M_{\eta'}'$, d\'{e}finissant ainsi une (unique) bijection
$ \kappa_3 : \mf R\iso\mf R'$ telle que $\wt h_{2}(B_{D})=B'_{\kappa_{3}(D)}$.
\end{obs}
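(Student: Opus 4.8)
The plan is to read the conclusion directly off the equivariance property (\ref{hdeuxtilde}). Since $\kappa_2:\mf C\iso\mf C'$ is a bijection and $\wt h_2(\mb T_{\mc C})=\mb T'_{\kappa_2(\mc C)}$ for every $\mc C\in\mf C$, the homeomorphism $\wt h_2$ carries the whole characteristic family $\bigcup_{\mc C\in\mf C}\mb T_{\mc C}$ onto $\bigcup_{\mc C'\in\mf C'}\mb T'_{\mc C'}$. Being a bijection of $\mc M_\eta$ onto $\mc M'_{\eta'}$, it therefore restricts to a homeomorphism
\[
\mc M_\eta\setminus\bigcup_{\mc C\in\mf C}\mb T_{\mc C}\ \iso\ \mc M'_{\eta'}\setminus\bigcup_{\mc C'\in\mf C'}\mb T'_{\mc C'},
\]
hence induces a bijection between the sets of connected components of these two open $3$-manifolds. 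Taking closures in $\mc M_\eta$, resp.\ in $\mc M'_{\eta'}$, one gets that $\wt h_2$ maps each JSJ block $B_D$ homeomorphically onto a JSJ block of $\mc M'_{\eta'}$, which by definition is $B'_{D'}$ for a well-determined $D'\in\mf R'$, namely the unique rupture component with $\mc M'_{D'}\subset B'_{D'}$.

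Next I would simply set $\kappa_3(D):=D'$. Because $D\mapsto B_D$ and $D'\mapsto B'_{D'}$ are canonical bijections from $\mf R$, resp.\ from $\mf R'$, onto the set of JSJ blocks of $\mc M_\eta$, resp.\ of $\mc M'_{\eta'}$, cf.\ the remark \ref{grapheJSJ}, and since $\wt h_2^{-1}$ plays the symmetric role, the map $\kappa_3:\mf R\to\mf R'$ is a well-defined bijection; and it is visibly the unique map satisfying $\wt h_2(B_D)=B'_{\kappa_3(D)}$.

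There is essentially nothing to overcome here: all the content sits in (\ref{hdeuxtilde}), the rest being the elementary fact that a homeomorphism sending a closed subset onto a closed subset induces a bijection of the complementary connected components that is compatible with passing to closures. As a complement one may note that the pair $(\kappa_3,\kappa_2)$ realises an isomorphism of the JSJ graphs of the remark \ref{grapheJSJ}: if a chain $\mc C$ has endpoints $D_0,D_{l_{\mc C}+1}\in\mf R$, then $\mb T_{\mc C}$ separates $B_{D_0}$ from $B_{D_{l_{\mc C}+1}}$, so its image $\mb T'_{\kappa_2(\mc C)}=\wt h_2(\mb T_{\mc C})$ separates $B'_{\kappa_3(D_0)}$ from $B'_{\kappa_3(D_{l_{\mc C}+1})}$; hence $\kappa_2(\mc C)$ is precisely the chain of $\mc D'$ joining $\kappa_3(D_0)$ and $\kappa_3(D_{l_{\mc C}+1})$.
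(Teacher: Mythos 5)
Your argument is correct and is exactly the content the paper leaves implicit behind the word \emph{visiblement}: since $\wt h_2$ is a homeomorphism carrying the characteristic family $\bigcup_{\mc C\in\mf C}\mb T_{\mc C}$ onto $\bigcup_{\mc C'\in\mf C'}\mb T'_{\mc C'}$ by (\ref{hdeuxtilde}), it permutes the closures of the complementary components, i.e.\ the JSJ blocks, and the bijection $D\mapsto B_D$ of la remarque \ref{grapheJSJ} then forces a unique $\kappa_3$. Nothing is missing; your closing observation that $(\kappa_3,\kappa_2)$ gives an isomorphism of JSJ graphs anticipates the property (\ref{aaa}) of the section \ref{4.3}.
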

\begin{lema}[de l'accord\'{e}on]\label{accordeon} Il existe un hom\'{e}omorphisme $\breve h_2$ isotope \`{a} $\wt h_2$, qui transforme tout tore \'{e}paissi en un tore \'{e}paissi, en respectant la structure produit; pr\'{e}cis\'{e}ment : $\breve h_2(\mc M_{\mc C})=\mc M_{\kappa _2(\mc C)}'$ et $\sigma ' _{\kappa _2(\mc C)}\circ \breve h_2=\sigma _{\mc C}$.
\end{lema}
\begin{dem}
Soit $\mc C\in \mf C$ et $B_D$ une composante de JSJ de $\mc M_\eta$, telle que $\mb T_{\mc C}\subset \partial B_D$. Le tore $\mb T'_{\mc C'}$, $\mc C':=\kappa _2(\mc C)$, est une composante connexe de $\partial B'_{\kappa_3(D)}$. Nous pouvons supposer que $\partial B^\flat_D \supset\sigma ^{-1}_{\mc C}(\mb T\times\{1\})$ et  $\partial B'{}^{\flat}_{\kappa_3(D)}\supset\sigma '{}^{-1}_{\mc C'}(\mb T\times\{1\})$. L'hom\'{e}o\-mor\-phisme
$r_s$ de $\mb T\times[0,1+\epsilon]$ sur $ \mb T\times[s,1+\epsilon]$, d\'{e}fini par $ r_s(p,t):=(p,s+t\frac{1+\epsilon-s}{1+\epsilon})$, $s\in[0,1]$, se rel\`{e}ve en un hom\'{e}omorphisme de $\sigma _{\mc C}^{-1}( \mb T\times[0,1+\epsilon])$ sur $\sigma _{\mc C}^{-1}( \mb T\times[s,1+\epsilon])$, qui se prolonge par l'identit\'{e} en un hom\'{e}omorphisme
$$R_s : B_D\iso B_D(s):= (B_D\setminus \sigma _{\mc C}^{-1}(\mb T\times[0,s[))\,,\quad R_{s|B^\flat_D}=\id_{B^\flat_D}\,.$$
On construit un hom\'{e}omorphismes similaire $R_s'$ de $B'_{\kappa_3(D)}$ sur $B_{\kappa_3(D)}'(s):= (B'_{\kappa_3(D)}\setminus \sigma _{\mc C'}^{-1}(\mb T\times[0,s[))$. Pour $s\in[0,1]$, posons $F_s : \mc M_\eta\to\mc M'_{\eta'}$  d\'{e}fini par :
$$
\left\{
\begin{array}{l}
         F_s(m)=\breve h_2(m)\,,\quad si \quad  m\notin B_D\,,\\
         F_s(m):= R_s'\circ \breve h_2\circ R_s^{-1}(m)\,,\quad si \quad  m\in B_D(s)\,,\\
         F_s(m):=  \sigma _{\mc C'}'{}^{-1}\circ (H_2\times id_{[0,s]})\circ \sigma _{\mc C}(m)\,,\quad si \quad  m\in \sigma _{\mc C}^{-1}(\mb T\times[0,s])\,,
       \end{array}
\right.
$$
o\`{u} $H_2(p):=\sigma _{\mc C'}'(\breve h_2(\sigma _{\mc C}^{-1}(p,0)))$. Visiblement $F_s$ est une isotopie qui v\'{e}rifie $F_0=\breve h_2$ et $F_1(\mc M_{\mc C}\cap B_D)=\mc M_{\mc C'}'\cap B'_{\kappa_3(D)}$. Pour achever la preuve du lemme, il suffit d'effectuer successivement de telles isotopies, pour toutes les composantes du bord
de chaque bloc de JSJ de $\mc M_\eta$.
\end{dem}
\begin{lema}\label{conjseif} Il existe un hom\'{e}omorphisme $h_3$ isotope \`{a} $\breve h_2$, qui satisfait les m\^{e}mes propri\'{e}t\'{e}s (\ref{accordeon}) que $\breve h_2$ et qui de plus conjugue les fibrations de Seifert du compl\'{e}mentaire des tores \'{e}paissis, i.e. il existe  des hom\'{e}omorphismes $\varsigma_D : \wh K_D\iso \wh K'_{\kappa_3(D)}$, $D\in \mf R$, tels que $\wh \rho _{\kappa_3(D)}'\circ h_{3|B_D^\flat}= \varsigma_D\circ \wh\rho _D$.
\end{lema}
\begin{dem}
Visiblement $B_{D}^\flat$ est muni de deux fibrations de Seifert : $\wh{\rho }_{D}$ de base $\wh K_D$ et $\wh\rho _{\kappa_3(D)}'\circ \breve h_{2|B_D^\flat}$ de base $\wh K_{\kappa_3(D)}'$. Comme $B_D^\flat $ n'est ni un un tore plein, ni un tore \'{e}paissi, le th\'{e}or\`{e}me d'unicit\'{e} des fibrations de Seifert  (1.2.5) de \cite{LMW}, donne une isotopie $\psi_{D,\, s} : B_D^\flat\to B_D^\flat$, $s\in [0,1]$, telle que $\psi_{D, 0}$ est l'identit\'{e} et $\psi_{D, 1}$ conjugue les feuilletages d\'{e}finies par ces deux fibrations. Pr\'{e}cis\'{e}ment, si $\varsigma_D: \wh K_D\to \wh K'_{\kappa_{3}(D)}$ est l'hom\'{e}omorphisme induit par $\psi_{D, 1}$ sur les espaces de feuilles, on a : $\wh\rho _{\kappa_3(D)}'\circ \breve h_2\circ \psi_{D, 1} = \varsigma _D\circ \wh\rho _D$. Gr\^{a}ce \`{a} l'assertion (\ref{extensa}) du lemme (\ref{extisotopies}) ci-dessous, ces isotopies se recollent en une isotopie globale
$\psi_s : \mc M_\eta\to\mc M_\eta$, v\'{e}rifiant $\psi_0=\mathrm{id}_{\mc M_\eta}$, $\psi_{s|B_D^\flat}=\psi_{D,s}$, $D\in \mf R$ et $\psi_{s|\mb T_{\mc C}}={\rm id}_{\mb T_{\mc C}}$, $\mc C\in \mf C$. Pour achever la d\'{e}monstration, il suffit de poser $h_3=\breve h_2\circ\psi_1$.
\end{dem}

\begin{lema}[d'extension des isotopies]\label{extisotopies}
Soit $B$ une vari\'{e}t\'{e} \`{a} bords et $B^\flat\subset B$ une sous-vari\'{e}t\'{e} \`{a} bords de m\^{e}me dimension, telle qu'il existe un hom\'{e}omorphisme   $\sigma$ de $\overline{B\setminus B^\flat}$ sur  $\partial B\times [0,1]$, v\'{e}rifiant  $\sigma (\partial B)= \partial B\times \{1\}$ et  $\sigma (\partial B^\flat)=\partial B\times \{0\}$. Alors :
\begin{enumerate}[(a)]
 \item\label{extensa} si $F_s : B^\flat\to B^\flat$, $s\in [0,1]$, est une isotopie telle que $F_0= \mathrm{id}_{B\flat}$, alors il existe une isotopie $F_s' : B\to B$, telle que $F'_{s\,|B^\flat}=F_s$ et $F'_{s\,|\partial B}= \mathrm{id}_{\partial B}$, $s\in [0,1]$;
  \item\label{extensb} si $G_s : \partial B\to \partial B$, $s\in [0,1]$, est une isotopie telle que $G_0=\mathrm{id}_{\partial B}$, alors il existe alors une isotopie $G'_s : B\to B$, telle que $G'_{s\,|B^\flat}=\mathrm{id}_{B^\flat}$ et $G'_{s\,|\partial B}= G_s$, $s\in [0,1]$.
\end{enumerate}
\end{lema}
\begin{dem} (\ref{extensa}) Notons $\wt F_s:=\sigma \circ F_s\circ\sigma ^{-1}_{|\partial B^{\flat}}$. Pour  $m\in \overline{B\setminus B^\flat}$, nous posons $\wt F_s'(m):=\sigma ^{-1}\circ \wt F_s'\circ\sigma (m)$,  avec $\wt F_s'(p,t):= \wt F_{s-t}(p,t)$, si $0\leq t\leq s$ et $\wt F_s'(p, t):= (p,t)$, si $s\leq t\leq 1$. La preuve de (\ref{extensb}) est similaire.
\end{dem}

Pour chaque $D\in \mf R$, l'hom\'{e}omorphisme $\varsigma_D$  donn\'{e} par (\ref{conjseif}), induit une bijection  $\varpi_D$ entre les points singuliers de $\mc D$ situ\'{e}s sur $D$ et ceux  de $\mc D'$ situ\'{e}s sur $D':=\kappa_3(D)$. Celle-ci v\'{e}rifie, avec les notations (\ref{baseseifert}) :
$$
\varpi_D(S_{\mf M}(D))=S_{\mf M}(D')\quad \hbox{\rm et donc}\quad \varpi_D(\wh{S}(D))=\wh{S}(D')\,,
$$
puisque les points d'attache des branches mortes correspondent aux fibres exceptionnelles des fibrations de Seifert  et les \'{e}l\'{e}ments de $\wh{S}(D)$, resp. de $\wh{S}(D')$, correspondent aux composantes connexes  de $\partial \wh K_D$, resp. de $\partial \wh{K}'_{D'}$. Il est facile de prouver que, quitte \`{a} modifier $h_3$ par une isotopie,
$\varsigma_D$ envoie le disque $D_s$, $s\in S_{\mf M}(D)$, sur le disque $D'_{\varpi_D(s)}$. Ainsi $h_3$ envoie le bloc \'{e}l\'{e}mentaire $\mc M_D$ sur le bloc \'{e}l\'{e}mentaire $\mc M_{D'}'$, en conjuguant les fibrations de Hopf restreintes \`{a} ces blocs.

\subsection{Conjugaison des arbres duaux des diviseurs}\label{4.3} R\'{e}capitulons les r\'{e}sultats obtenus : 
nous avons construit un hom\'{e}o\-mor\-phisme $h_3 :\mc M_\eta\to \mc M_{\eta'}'$ tel que $h_3\asymp h$, ainsi que des bijections
\begin{equation}\label{bijectioninterm}
\kappa_3 : \mf R\to\mf R\,,\quad \kappa _2 : \mf C\to \mf C'\,,\quad \kappa _1 : \mf M\to \mf M'\,,
\end{equation}
qui satisfont pour tout $\mc C\in \mf C$ et  $\wt{\mc C}\in \mf M$, les propri\'{e}t\'{e}s suivantes :
\begin{enumerate}[(a)]
\item \label{aaa} les images par $\kappa_3$ des composantes d'extr\'{e}mit\'{e} de $\mc C$, sont les composantes d'extr\'{e}mit\'{e} de $\kappa _2(\mc C)$;
\item\label{bbb} si $D_0\in \mf R$ est la composante d'attache de $\wt{\mc C}$, alors $\kappa_3(D_0)$ est la composante d'attache de $\kappa _1(\mc C)$;
\item\label{ccc} l'hom\'{e}omorphisme $h_3$ envoie $\mc M_{\mc C}$ sur $\mc M'_{\kappa _2(\mc C)}$, $\mc M_{\wt{\mc C}}$ sur $\mc M'_{\kappa _1(\wt{\mc C})}$ et $\mb T_{\mc C}$ sur $\mb T'_{\kappa _2(\mc C)}$; pour $D\in \mf R$, il envoie aussi, en conjuguant les fibrations de Seifert, $B_D$ sur $B'_{\kappa_3(D)}$, $B'{}^\flat_{\!\!D}$ sur $B'{}^{\flat}_{\!\!\kappa_3(D)}$ et $\mc M_{D}$ sur $ \mc M'_{\kappa_3(D)}$.
\end{enumerate}
La proposition suivante  \'{e}tend les correspondances (\ref{bijectioninterm}) \`{a} toutes les composantes des diviseurs. Elle pr\'{e}cise les r\'{e}sultats classiques de Zariski-Lejeune en donnant, par la propri\'{e}t\'{e} (\ref{ccc}), les relations entre l'hom\'{e}omorphisme $h$ qui transforme $S$ en $S'$ et la correspondance des arbres duaux de $\mc D$ et $\mc D'$.
\begin{prop}\label{zariskilejeune}
Il existe une bijection $\kappa : \comp(\mc D)\to \comp(\mc D')$ entre les ensembles de composantes irr\'{e}ductibles de $\mc D  $ et $\mc D'$, telle que :
\begin{enumerate}
  \item\label{un} $(\kappa(D),\kappa(D'))=(D,D')$, pour tout $D,D'\in \comp(\mc D)$;
  \item\label{deux} pour tout $\mc C\in \mf C$, resp. $\wt{\mc C}\in \mf M$, on a l'\'{e}quivalence : ($D\in \mc C)\Leftrightarrow(\kappa(D)\in \kappa _2(\mc C))$, resp. $(D\in \wt{\mc C})\Leftrightarrow(\kappa(D)\in \kappa _1(\wt{\mc C}))$; en particulier $\mc C$ et $\kappa_2(\mc C)$ ont m\^{e}me longueur, ainsi que $\wt{\mc C}$ et $\kappa _1(\wt{\mc C})$;
\item\label{trois} la restriction de $\kappa$ \`{a} $\mf R\subset\comp(\mc D)$ est \'{e}gale \`{a} $\kappa_3$.
\end{enumerate}
En particulier les propri\'{e}t\'{e}s (\ref{aaa}) (\ref{bbb}) et (\ref{ccc}) ci-dessus restent satisfaites par $\kappa$.
\end{prop}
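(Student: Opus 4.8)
The idea is to upgrade the three partial bijections $\kappa_3,\kappa_2,\kappa_1$ of \eqref{bijectioninterm} to a single bijection $\kappa$ on all of $\comp(\mc D)$ by prescribing its behaviour on the interior of each chain and each dead branch. Concretely: the restriction of $\kappa$ to $\mf R$ is \emph{defined} to be $\kappa_3$. For a chain $\mc C=\{D_0,\dots,D_{l_{\mc C}+1}\}\in\mf C$, property \eqref{ccc} tells us that $h_3$ sends the thickened torus $\mc M_{\mc C}$ onto $\mc M'_{\kappa_2(\mc C)}$ respecting the product structures $\sigma_{\mc C}$, $\sigma'_{\kappa_2(\mc C)}$; in particular $h_3$ carries the elementary blocks $\mc M_{D_j}$ ($1\le j\le l_{\mc C}$) to elementary blocks inside $\mc M'_{\kappa_2(\mc C)}$, and this forces $l_{\mc C}=l_{\kappa_2(\mc C)}$ and defines $\kappa(D_j)$ as the $j$-th interior component of $\kappa_2(\mc C)$. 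The orientation of the chain is pinned down by property \eqref{aaa} together with \eqref{bbb}: $D_0$ (resp.\ $D_{l_{\mc C}+1}$) is the rupture component such that $\kappa_3(D_0)$ (resp.\ $\kappa_3(D_{l_{\mc C}+1})$) is the corresponding endpoint of $\kappa_2(\mc C)$, so there is no ambiguity in which end maps to which. The same construction applied to $\mc M_{\wt{\mc C}}$ for a dead branch $\wt{\mc C}\in\mf M$ defines $\kappa$ on the interior components of $\wt{\mc C}$, using $\kappa_1$ in place of $\kappa_2$ and property \eqref{bbb} to match attaching points; here the ``free'' end of the solid torus is intrinsic, so again no choice is needed.

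The next step is to check that these local definitions glue into a well-defined bijection $\comp(\mc D)\to\comp(\mc D')$. Since $\comp(\mc D)=\mf R\sqcup\bigsqcup_{\mc C\in\mf C}(\text{interior of }\mc C)\sqcup\bigsqcup_{\wt{\mc C}\in\mf M}(\text{interior of }\wt{\mc C})$ — this is precisely the dual-tree decomposition into rupture vertices, chains, and dead branches — the three pieces are disjoint and their union is everything, so there is nothing to reconcile at overlaps; bijectivity follows from the bijectivity of $\kappa_3$, $\kappa_2$, $\kappa_1$ and the fact that on each chain/branch we have a bijection of linearly-ordered finite sets of equal length. Property \eqref{deux} is immediate from the construction, and \eqref{trois} is the definition. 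Property \eqref{un} — preservation of the intersection form — requires a short argument: the self-intersection $(D,D)=e_D$ and the incidence numbers are encoded in the weighted dual tree, and it is classical (Zariski--Lejeune, or the plumbing description of Neumann) that the dual tree of the minimal resolution of a plane curve germ, weighted by self-intersections and carrying the multiplicities $\nu_D(f\circ E)$, is a topological invariant of the embedded link; since $h_3\asymp h$ and $h$ is an embedded homeomorphism of germs, the induced correspondence of trees must preserve these weights. Alternatively one can read $(D,D')$ off the combinatorics of the JSJ graph (Remark~\ref{grapheJSJ}): for adjacent components $(D,D')=1$ iff the corresponding edge is present, and the self-intersections along a chain are recovered from the matrix $A$ of Proposition~\ref{det}, whose determinant $\pm\det A=a$ is the ``gluing coefficient'' $\mf c_0=a\,\mf c_{l_{\mc C}}+b\,\mf c_{l_{\mc C}+1}$ that $h_3$ preserves because it respects the product structure of $\mc M_{\mc C}$ and the Seifert fibrations of the two adjacent blocks.

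The final sentence — that \eqref{aaa}, \eqref{bbb}, \eqref{ccc} remain valid for $\kappa$ — is then tautological, since on $\mf R$ we have $\kappa=\kappa_3$ and the statements of \eqref{aaa}--\eqref{ccc} only involve $\kappa_3$, $\kappa_2$, $\kappa_1$ and $h_3$, which are unchanged.

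**Main obstacle.** The one genuinely non-formal point is property \eqref{un}, the invariance of the intersection matrix. Everything else is bookkeeping on the dual tree. For \eqref{un} the cleanest route is to invoke the classical fact — already alluded to in the discussion preceding the proposition — that $h_3$, being fundamentally equivalent to the germ homeomorphism $h$, induces an isomorphism of the weighted dual graphs; the subtlety is that one must make sure the weights in question (self-intersections, equivalently the multiplicities $\nu_{E_i}(f_k\circ E)$ appearing in Corollary~\ref{homologie}) are read off from data that $h_3$ provably preserves, namely the classes $\mf c_D\in H_1(\mc M_\eta;\mb Z)$ and the incompressible tori $\mb T_{\mc C}$, rather than from the ambient complex structure. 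Combining Corollary~\ref{homologie} with the fact that $h_{3*}$ sends $\mf c_{\mc S_k}$ to $\mf c'_{\mc S'_{\kappa(\mc S_k)}}$ (meridians to meridians, by Theorem~\ref{pk}) pins down the multiplicity vector and hence, by inverting $(\mc E,\mc E)$, the intersection matrix of $\mc E$; the transverse intersections with the strict transform are handled the same way. I expect this to be the step that needs the most care, though not much length.
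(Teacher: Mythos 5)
Your overall skeleton (extend $\kappa_3$ along chains and dead branches via $\kappa_2$ and $\kappa_1$, then verify the intersection numbers) is the paper's, but there is a genuine gap at the central step. You assert that because $h_3$ sends $\mc M_{\mc C}$ onto $\mc M'_{\kappa_2(\mc C)}$ respecting the product structures, it ``carries the elementary blocks $\mc M_{D_j}$ to elementary blocks'' and that ``this forces $l_{\mc C}=l_{\kappa_2(\mc C)}$''. This presupposes the conclusion: a thickened torus $\mc M_{\mc C}\cong \mb T\times[-1,1]$ looks the same whatever the length of the chain, and the homeomorphism produced by Lemmas (\ref{accordeon}) and (\ref{conjseif}) only controls the single global product structure $\sigma_{\mc C}$ and the Seifert fibrations on the adjacent blocks $B_D^\flat$; nothing forces it to respect the subdivision of $\mc M_{\mc C}$ into the blocks $\mc M_{D_j}$, $\mc M_{s_j}$, whose number is exactly what is at stake. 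The equality of lengths is the content of Lemma (\ref{egalmeridien}), which is the technical heart of this section: one first gets $h_{3*}(\mf c_0)=\mf c'_0$ and $h_{3*}(\mf c_{l+1})=\mf c'_{l'+1}$ from the conjugation of Seifert fibrations, then the invariance of the determinant form, then uses $\det(\mf c_{j-1},\mf c_{j+1})=-(D_j,D_j)\ge 2$ --- which holds only because the resolutions are \emph{minimal} --- together with the uniqueness statement of the arithmetic Sous-Lemme (\ref{spivakovski}) to identify the two sequences of meridians term by term. Blowing up a point on a chain changes its length without changing the $3$-manifold or the thickened torus, so no purely topological argument of the kind you sketch can succeed; the minimality must enter, and it enters through that sub-lemma.

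The same gap undermines your treatment of property (\ref{un}). The paper obtains $(D_j,D_j)=(\kappa(D_j),\kappa(D_j))$ on chains from $h_{3*}(\mf c_j)=\mf c'_j$ and the invariance of the determinant form, and on rupture components from the index formula (\ref{formuleintersec}) applied to the meridians of all adjacent components --- both consequences of Lemma (\ref{egalmeridien}). Your first route (invoke Zariski--Lejeune) is not available: that theorem produces \emph{some} isomorphism of weighted dual trees, not the statement that the particular correspondence $\kappa$ built from $h_3$ preserves the weights, and the proposition is explicitly presented as a refinement of Zariski--Lejeune in exactly this respect. Your second route (recover $(\mc E,\mc E)$ from the multiplicities of Corollary (\ref{homologie}) and Theorem (\ref{pk})) also does not close as stated: to deduce $\nu_{E_i}(f_k\circ E)=\nu_{\kappa(E_i)}(f'_k\circ E')$ you would need to know that $h_{3*}$ sends the class $\mf c_{E_i}$ to $\mf c'_{\kappa(E_i)}$ in $H_1(\mc M_\eta,\mb Z)$ for the chain-interior components $E_i$ as well, which is again precisely Lemma (\ref{egalmeridien}). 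In short, the bookkeeping parts of your plan are fine, but the two substantive claims both rest on the meridian-matching lemma and its continued-fraction sub-lemma, which your proposal replaces by an unjustified topological assertion.
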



Avant de prouver la proposition \ref{zariskilejeune} nous avons besoin d'un r\'{e}sultat auxiliaire. Soit $\mc C\in\mf C\cup\mf M$.
\begin{itemize}
\item Si $\mc C=\{D_{0},\ldots,D_{l_{\mc C}+1}\}\in\mf C$, consid\'{e}rons la cha\^{\i}ne $\mc C'=\{D_{0}',\ldots,D_{l_{\mc C'}+1}'\}=\kappa_{2}(\mc C)\in\mf C'$, que nous num\'{e}rotons  de mani\`{e}re \`{a} avoir $D_{0}'=\kappa_{3}(D_{0})$ et $D_{l_{\mc C'}+1}'=\kappa_{3}(D_{l_{\mc C}+1})$.

\item Si $\mc C=\{D_{0},\ldots,D_{l_{\mc C}}\}\in\mf M$, consid\'{e}rons $\mc C'=\{D_{0}',\ldots,D_{l_{\mc C'}}'\}=\kappa_{2}(\mc C)\in\mf M'$.
\end{itemize}
D\'{e}signons par $\mf c_j\in H_1(\mc M_{\mc C},\mb Z)$ le m\'{e}ridien associ\'{e} \`{a} $D_j$ et par $\mf c_k'\in H_1(\mc M'_{\mc C'}, \mb Z)$ celui associ\'{e} \`{a} $D'_k$, cf. (\ref{meridienchaine}). L'\'{e}galit\'{e} $h_3(\mc M_{\mc C})=\mc M'_{\mc C'}$ induit un isomorphisme :  $$h_{3\ast } : H_1(\mc M_{\mc C}, \mb Z)\to H_1(\mc M'_{\mc C'}, \mb Z)\,.$$
Dans le cas o\`{u} $\mc C$ et $\mc C'$ sont des branches mortes, $\mf c_{l_{\mc C}+1}$ et $\mf c_{l_{\mc C'}+1}'$ d\'{e}signent les m\'{e}ridiens exceptionnels correspondants.
Comme $h_{3}$ conjugue les fibrations de Seifert de $B_{D_{0}}$ et $B_{D_{0}'}'$, il conjugue aussi les fibres exceptionnelles; ainsi $h_{3}$ envoie $\mc M_{\mc C}^{\circ}$ sur $\mc M_{\mc C'}'^{\circ}$ et induit un isomorphisme :
$$h_{3*}:H_{1}(\mc M_{\mc C}^{\circ},\mb Z)\to H_{1}(\mc M_{\mc C'}'^{\circ},\mb Z)\,.$$

\begin{lema}\label{egalmeridien}
Pour tout $j=0,\ldots,l+1$, on a les \'{e}galit\'{e}s : $l_{\mc C}=l_{\mc C'}=:l$ et $h_{3*}(\mf c_{j})=\mf c_{j}'\in H_{1}^{\mc C'}$.
\end{lema}

\noindent Nous allons d'abord d\'{e}montrer le sous-lemme technique suivant\footnote{Nous remercions Mark Spivakovski   pour son aide et ses suggestions concernant la preuve de ce lemme.} :
\begin{sublema}\label{spivakovski}
Soient $a=(\alpha _1,\alpha _2)$ et  $b=(\beta _1,\beta _2)\in \mb Z^2$, $\mathrm{pgcd}(\alpha _1,\alpha _2)=1$, $\mathrm{pgcd}(\beta _1,\beta _2)=1$ tels que  $\det(a,b)>0$. Il existe alors  $n\in \mb N$ unique et une suite finie
$\mathbf{c}:=(c_0,\ldots ,c_{n+1})$ d'\'{e}l\'{e}ments de $\mb Z^2\cap(\mb Qa+\mb Qb)\subset\mb Q^2$
unique tels que :
\begin{equation}\label{concllemme}
\left\{
  \begin{array}{l}
  \det(c_j,c_{j+1})=1,\; j= 0,\ldots ,n, \\
   \det(c_{k-1}, c_{k+1})>1,\; k = 1,\ldots ,n, \\
   c_0=a,\; c_{n+1}=b\,.
  \end{array}
\right.
\end{equation}
En particulier  si $\det(a,b)=1$, l'unique suite $\mathbf{c}$ satisfaisant (\ref{concllemme}) est   donn\'{e}e par $n=0$, $c_0=a$ et $c_1=b$.
\end{sublema}
\begin{dem2}{du sous-lemme}
L'existence de $n$ et des $c_j$ se montre ais\'{e}ment \`{a} l'aide de fractions continues. Pour l'unicit\'{e}, nous utiliserons l'assertion suivante, facile \`{a} prouver\footnote{Visiblement,  $0<\lambda ,\mu <1$; ainsi $(1,1)$ appartient \`{a} l'int\'{e}rieur du parall\'{e}\-lo\-gram\-me de sommet $0$, $u$, $u+v$, $v$; mais ceci est impossible lorsque
 $u$ et $v$ forment une $\mb Z$-base de $\mb Z^2$.} :
\begin{enumerate}
\item[$(\diamond)$] \it soient $u:=(\nu_1,\nu_2), v:=(\upsilon_1,\upsilon_2)\in \mb Z^2$ et $\lambda, \mu  \in \mb Q_{>0}$, tels que $\det(u,v)> 0$, $\nu_1+\nu_2>1$, $\upsilon_1+\upsilon_2>1$ et $(1,1)= \lambda u+\mu v$. Alors $\det(u,v)>1$.
\end{enumerate}
%

Pour l'unicit\'{e}, nous raisonnons par double r\'{e}currence, avec l'hypoth\`{e}se $\mc H_{N,\,N'}$ suivante :
\begin{enumerate}
\item[] \it ``si  $\mathbf{c}:=(c_j)_{j=0}^{n+1}$ et $\mathbf{c'}:=(c'_k)_{k=0}^{n'+1}$ sont deux suites finies d'\'{e}l\'{e}\-ments de $\mb Z^2\cap(\mb Qa+\mb Qb)$, satisfaisant (\ref{concllemme}) et si $0\leq n\leq N$ et $0\leq n'\leq N'$, alors $n=n'$ et $\mathbf{c}=\mathbf{c'}$.''
\end{enumerate}
Nous allons d'abord montrer $\mc H_{0,\,N}\Rightarrow\mc H_{0,\,N+1}$; par sym\'{e}trie on aura aussi $\mc H_{N,\,0}\Rightarrow\mc H_{N+1,\,0}$; comme $\mc H_{0,\,0}$ est \'{e}vident, il suffira alors de prouver l'implication $\mc H_{N-1,\,N'-1}\Rightarrow\mc H_{N,\,N'}$.
\begin{description}
 \item [$\mc H_{0,\,N'}\Rightarrow\mc H_{0,\,N'+1}$ ]  \`A un automorphisme  de $\mb Z^2$ pr\`{e}s, nous supposons $c_0=a=(1,0)$ et $c_1=b=(0,1)$. La propri\'{e}t\'{e} $(\diamond)$ donne l'existence d'un indice $\wt k\in\{1,\ldots, N'\}$, tel que $c_{\wt k}'=(1,1)$, $1\leq i_0\leq N$. On conclut en appliquant l'hypoth\`{e}se de r\'{e}currence aux deux suites $((1,0), (1,1))$ et $(c_0',\ldots ,c_{\wt k}')$, ainsi qu'aux deux suites  $((1,1), (0,1))$ et $\mathbf{\wt c'}:= (c_{\wt k}',\ldots ,c_{N'+1}')$.
  \item [$\mc H_{N-1,N'-1}\Rightarrow\mc H_{N,N'}$ ] Toujours \`{a} un automorphisme de $\mb Z^2$ pr\`{e}s, nous pouvons maintenant supposer $a=(1,0)$ et $b=(\beta _1,\beta _2)$, avec $\beta _1<\beta _2$. Gr\^{a}ce \`{a}  $(\diamond)$ on obtient deux indices $\wt j\in\{1,\ldots,N\}$ et $\wt k\in\{1,\ldots ,N'\}$ tels que $c_{\wt j}=c_{\wt k}'=(1,1)$. L'hypoth\`{e}se de r\'{e}currence appliqu\'{e}e aux deux suites $(c_j)_{j=0,\ldots,\wt j}$ et $(c_k)_{k=0,\ldots,\wt k}$, ainsi qu'aux deux suites $(c_j)_{j=\wt j,\ldots,N}$ et $(c_k)_{k=\wt k,\ldots,N'}$, permet de conclure.
\end{description}
Ceci ach\`{e}ve la d\'{e}monstration du sous-lemme.
\end{dem2}
\begin{dem2}{du lemme (\ref{egalmeridien})}
D'apr\`{e}s le lemme (\ref{conjseif}),   $h_3$ envoie toute  composante de $\partial\mc M_{\mc C}$  sur une composante de $\partial\mc M'_{\mc C'}$ en conjuguant les fibrations de Seifert correspondantes.  L'isomorphisme $h_{3*}$ induit en homologie satisfait donc les \'{e}galit\'{e}s :
\begin{equation}\label{egalseif}
h_{3\ast}(\mf c_0)=\mf c'_0\quad \hbox{\rm et}\quad h_{3\ast}(\mf c_{l+1})=\mf c'_{l'+1}\,,
\end{equation}
o\`{u} nous posons $l:=l_{\mc C}$ et $l'=l_{\mc C'}$ pour abr\'{e}ger.
Gr\^{a}ce \`{a} l'assertion (ii) de la proposition \ref{det}  nous d\'{e}duisons aussi que
\begin{equation}
\label{h3det}
\det(\mf a,\mf b)=\det{}'( h_{3*}(\mf a),h_{3*}(\mf b))\,,
\end{equation}
pour tous $\mf a,\mf b\in H_{1}^{\mc C}$.
Des relations (\ref{formindice}) on tire : $$\det(\mf c_{j-1},\mf c_{j+1})=- (D_{j},D_{j})\geq 2\,,\quad j=1,\ldots ,l\,,$$
car l'application $E$ de r\'{e}duction de $S$ est minimale.

Posons $\mf c''_j:=h_{3\ast}^{-1}(\mf c'_j)$. Les  deux suites finies $\mathbf{c}:= (\mf c_j)_{j=0,\ldots,l+1}$ et $\mathbf{c''}:=(\mf c''_j)_{j=0,\ldots,l'+1}$ d'\'{e}l\'{e}ments de $H_1(\mc M'_{\mc C'},\mb Z)\simeq \mb Z^2$, ont m\^{e}me premier et m\^{e}me dernier terme (\ref{egalseif}); elles satisfont les relations (\ref{concllemme}) du lemme (\ref{spivakovski}). La conclusion r\'{e}sulte de l'unicit\'{e} de ces familles.
\end{dem2}

\begin{dem2}{de la proposition (\ref{zariskilejeune})}
Les cha\^{\i}nes et les branches mortes qui se correspondent par $\kappa _2$ et par $\kappa _1$, ont m\^{e}me longueur; il  existe donc une et une seule bijection $\kappa : \comp(\mc D)\to \comp(\mc D')$ qui \'{e}tend $\kappa_3$ et qui satisfait les assertions (\ref{deux}) et (\ref{trois}), ainsi que l'\'{e}quivalence $(D\cap D'\neq\emptyset\Leftrightarrow \kappa(D)\cap\kappa(D')\neq\emptyset)$. Les auto-intersections de toutes les composantes compactes consid\'{e}r\'{e}es \'{e}tant $\leq -1$, il suffit pour prouver (\ref{un}) de montrer les relations :
 \begin{equation}\label{egautoint}
 (D,D)=(\kappa(D),\kappa(D))\,,\quad \hbox{pour tout}
\quad D\in \comp(\mc E)\,.
\end{equation}
Lorsque $D_j$ est contenue dans une cha\^{\i}ne ou une branche morte de $\mc E$, les relations (\ref{formindice}) se d\'{e}duisent directement des \'{e}galit\'{e}s $(D_j,D_j)=-\det(\mf c_{j-1},\mf c_{j+1})$. Comme $h_{3\ast} $ commute aux formes d\'{e}terminants (\ref{h3det}), relations $h_{3\ast}(\mf c_j)=\mf c'_{j}$ du lemme (\ref{egalmeridien}) 
 donnent : $(D_j,D_j)=(\kappa(D_j),\kappa(D_j))$, pour tout $j=1,\ldots,l_{\mc C}+1$.

Il reste \`{a} prouver (\ref{egautoint}) lorsque $D$ est de valence $\geq 3$. Remarquons qu'alors  $\mc M_D$ est un r\'{e}tract par d\'{e}formation de $$\mc M_D^{\sharp}:=\mc M_D\cup_{j=1}^{v(D)}\mc M_{s_j}\,,\quad D\cap D_j=:\{s_j\}\,,$$
o\`{u} $D_1,\ldots ,D_{v(D)}$ sont les composantes de $\mc D$ adjacentes \`{a} $D$. Le point singulier $s_j$ est le point de branchement d'une cha\^{\i}ne, d'une branche morte  $\mc C_j$, ou bien encore d'une transform\'{e}e stricte. Consid\'{e}rons le m\'{e}ridien associ\'{e} \`{a} $D_j$, qui est un \'{e}l\'{e}ment $\mf c_j$ de $H_1(M_j,\mb Z)\simeq H_1(\mc M_{s_j}\cap \mc M_D,\mb Z)$, o\`{u} $M_j$ est le tore \'{e}paissi $\mc M_{\mc C_j}$, ou $\mc M_{\mc C_j}^\circ$ dans les deux premiers cas, ou bien encore $\mc {M}_{s_j}\cup \mc M_{D_j}$ dans le dernier cas.
D\'{e}signons par $\wt{\mf c}_j\in H_1(\mc M_D,\mb Z)$ l'image de $\mf c_j$ par le monomorphisme induit en homologie
par l'inclusion $\mc M_D\cap\mc M_{s_{j}}\subset \mc M_D$.
%
Nous pouvons r\'{e}\'{e}crire
la \emph{formule d'indice le long de} $D$ donn\'{e}e par  (\ref{h1}) de la fa\c{c}on suivante, cf. \cite{Dimca,EN} :
\begin{equation}\label{formuleintersec}
(D,D)\mf c + \sum_{j=1}^{v(D)}  \wt{\mf c}_j = 0 \quad \hbox{\rm dans}\quad H_1(\mc M_{D},\mb Z)\,,
\end{equation}
o\`{u} $\mf c$ est la classe d'homologie d'une fibre  $\rho _D^{-1}(p)$, $p\in K_D$, que nous appelons \emph{le m\'{e}ridien associ\'{e} \`{a} $D$}. D'apr\`{e}s ce qui pr\'{e}c\`{e}de,  $\{\kappa(D_j)\}_{j=1,\ldots,v(D)}$ est la collection des composantes adjacentes \`{a} $\kappa(D)$ et gr\^{a}ce au  lemme (\ref{egalmeridien}), 
leurs m\'{e}ridiens respectifs sont  $h_{3\ast}(\mf c_j)\in H_1(\mc M'_{\kappa(D)}\cap \mc M'_{s'_j},\mb Z)$, o\`{u} $\{s'_j\}:=\kappa(D)\cap\kappa(D_j)$. De m\^{e}me $h_{3\ast}(\mf c)$ est le  m\'{e}ridien de $\kappa(D)$, car $h_3$ conjugue les fibration de Seifert de $B_D$ et $B'_{\kappa(D)}$. La formule d'indice le long de $\kappa_3(D)$ donne l'\'{e}galit\'{e} (\ref{egautoint}).
\end{dem2}


\subsection{Extension \`{a} la dimension quatre}
Toujours avec les notations (\ref{KD}) et (\ref{XK}), nous d\'{e}finissons maintenant la collection des \emph{blocs \'{e}l\'{e}mentaires du 4-tube de Milnor $\mc T_\eta$} par :
\begin{equation}\label{4blocselem}
\mc T_{s}:=\mc T_{\eta}\cap\Omega _{s}\quad {\rm et} \quad\mc T_D:=\mc T_\eta(K_{D})\,,\quad s\in {\rm Sing}(\mc D)\,, \quad
D\in \comp(\mc D)\,.
\end{equation}
Un \emph{4-tube associ\'{e} \`{a} une cha\^{\i}ne $\mc C\in \mf C$}, resp. \emph{\`{a} une branche morte $\wt{\mc C}\in \mf M$} est, avec les notations (\ref{notachaine1}) et (\ref{notachaine2}), resp. (\ref{relbrmorte}) et (\ref{voisbrmorte}) :
\begin{equation}\label{4voischaine}
    \mc T_{\mc C} :=\bigcup _{j=1}^{l_{\mc C}}\mc T_{D_j}\cup\bigcup_{j=0}^{{l_{\mc C}}}\mc T_{s_j}\,,\quad{\rm resp.}\quad
\mc T_{\wt{\mc C}}:=\bigcup _{j=1}^{l_{\wt{\mc C}}}\mc T_{D_j}\cup\bigcup_{j=0}^{l_{\wt{\mc C}}-1}\mc T_{s_j}\,.
\end{equation}
Nous d\'{e}finissons de mani\`{e}re similaire les blocs \'{e}l\'{e}mentaires de $\mc T_{\eta'}'$, que nous notons $\mc T_{s'}'$, $s'\in {\rm Sing}(\mc D')$ et $\mc T'_{D'}$,
$D'\in \comp(\mc D')$, ainsi que les 4-tubes $\mc T'_{\mc C'}$, $\mc C'\in\mf C'$ et $\mc T'_{\wt{\mc C}'}$,  $\wt{\mc C}'\in \mf M'$.

Nous allons d'abord construire, pour  $\star\in \mf R$, des
hom\'{e}omorphismes $G_{\star} : \mc T_{\star}\to \mc T_{\kappa(\star)}'$  satisfaisant les propri\'{e}t\'{e}s (\ref{prohol}), (\ref{proprfibre}) de (\ref{homeoexcellents}) et co\"{\i}ncidant avec $h_3$ sur $\mc T_{\star}\cap\mc M_{\eta}=\mc M_\star$. Ensuite nous construirons  $G_{\star}$, lorsque $\star$ est une cha\^{\i}ne, puis une branche morte ou une transform\'{e} stricte, qui satisfera  toujours les propri\'{e}t\'{e}s (\ref{prohol}), (\ref{proprfibre}) de (\ref{homeoexcellents}), se recollera avec les $G_D$, $D\in \mf R$ d\'{e}j\`{a} construits, mais qui ne co\"{\i}ncidera plus n\'{e}cessairement avec $h_3$ sur $\mc M_\star$. Enfin, \`{a} l'aide de twists de Dehn,  nous modifieront l'hom\'{e}omorphisme global
\begin{equation}\label{homoprovisoire}
    G : \mc T _\eta\longrightarrow \mc T_{\eta'}'\,,\quad G_{|\mc M_{\star}} =G_{\star}\,,\quad \star\in \mf R\cup\mf C\cup\mf M\,,
\end{equation}
ainsi obtenu, pour qu'il devienne isotope \`{a} $h_3$ en restriction \`{a} $\mc M_\eta$. Nous avons maintenant un hom\'{e}omorphisme $\Phi$ qui satisfait le th\'{e}or\`{e}me \ref{teomarquagexcellent}.

\subsubsection{Construction de $G_{D}$, pour   $D \in\mf R$}
Les restrictions des fibrations de Hopf  aux blocs \'{e}l\'{e}mentaires $\mc T_D$ et $\mc T_{\kappa(D)}'$, $D\in \comp(\mc D)$,
sont des fibrations en disques globalement triviales; on dispose sur ces blocs de champs de vecteurs diff\'{e}rentiables $Z$ et $Z'$, tangents aux fibres de Hopf qui, en restriction \`{a} chaque fibre, correspondent au champ radial r\'{e}el $u\frac{\partial}{\partial u}+ v\frac{\partial}{\partial v}$, dans des coordonn\'{e}es trivialisantes $(u+iv,\rho _D) : \mc T_D\to  \mb D_1\times K_D$. Nous d\'{e}finissons un hom\'{e}omorphisme   qui \'{e}tend  $h_{3|\mc M_D}$ et conjugue les fibrations de Hopf, en posant :
$$
\left\{
  \begin{array}{l}
G_D : \mc T_D\to \mc T_{\kappa(D)}'\,,\quad G_{D|\mc M_D}=h_3\,,\quad \rho '_{\kappa(D)}\circ G_D= \rho _{D|\mc T_D}\,,\\
   G_D(\phi_t^Z(m)):=\phi_{t}^{Z'}(h_3(m))\,,\quad\hbox{si}\quad m \in \mc M_D, t<0\,, \\
 G_D(m):=\varsigma_D(m)(h_3(m))\,,\quad\hbox{si}\quad m\in K_D\,,
  \end{array}
\right.
$$
o\`{u} $\phi_t^Z$ et $\phi_t^{Z'}$ d\'{e}signent les flots de $Z$ et  $Z'$ respectivement.

\subsubsection{Construction de $G_{\mc C}$, lorsque $\mc C$ est une
cha\^{\i}ne} Consi\-d\'{e}\-rons  une cha\^{\i}ne $\mc C\in \mf C$ de $\mc D$ et la cha\^{\i}ne de $\mc D'$ associ\'{e}e, $\mc C':=\kappa _2(\mc C))$,
$$\mc C = \{D_j\}_{j=0,\ldots, l+1}\in \mf C\,,\quad
\mc C' = \{D'_j\}_{j=0,\ldots, l+1}\,,\quad D'_j:=\kappa(D_j)\,,$$
les composantes $D_0$, $D_{l+1}$ \'{e}tant de valences $\geq 3$. Nous supposerons $l\geq 1$, le cas d'une cha\^{\i}ne de longueur $l=0$, n'ayant aucune composante de valence $2$ et avec un seul
point singulier $\{s\}=D_0\cap D_1$ se traitant de mani\`{e}re identique, avec  $\mc M_{\mc C}=\mc M_{s}$ et $\mc M_{\mc C'}'=\mc M_{s'}'$, $\{s'\}:=D'_0\cap D'_1$.

Dans une premi\`{e}re \'{e}tape, nous allons construire  des hom\'{e}omorphismes $g_{s_j}$ holomorphes sur des voisinages $W_{s_j}$ des singularit\'{e}s $\{s_j\}:=D_{j-1}\cap D_j$; ensuite nous construirons des hom\'{e}omorphismes $g_{D_j}$ sur les blocs \'{e}l\'{e}mentaires  $\mc T_{D_j}$, qui conjuguent les fibrations de Hopf; enfin, \`{a} la derni\`{e}re \'{e}tape, nous recollons ces hom\'{e}omorphismes, pour obtenir un hom\'{e}omorphisme
\begin{equation}\label{homeochaine}
G_{\mc C} : \mc T_{\mc C}\to \mc T'_{\mc C'}
\end{equation}
qui satisfait les propri\'{e}t\'{e}s (\ref{prohol}) et (\ref{proprfibre}) des hom\'{e}omorphismes excellents (\ref{homeoexcellents}).\\

\indent \textit{\'Etape 1. }
L'application $f\circ E$, compos\'{e}e de l'\'{e}quation de $S$ fix\'{e}e \`{a} la section (\ref{ssectubesMilnor}) avec l'application de r\'{e}duction, est une \'{e}quation globale de $\mc D$. Le corollaire \ref{homologie} donne des formules universelles (voir aussi \cite[Theorem 18.2]{EN}) exprimant les multiplicit\'{e}s $\nu_D(f\circ E)$ le long de chaque composante $D$ de $\mc D$, \`{a} partir de la matrice d'intersection de $\mc D$.
%
%
Les matrices d'intersection $(D',D'')$ et $(\kappa(D'),\kappa(D''))$, $D'$, $D''\in \comp(\mc D)$,  sont \'{e}gales  d'apr\`{e}s  l'assertion (\ref{un}) de (\ref{zariskilejeune}).
On a donc en particulier, toujours avec les notations de la section (\ref{ssectubesMilnor}),  $$\nu_{D_j'}(f'\circ E')= \nu_{D_j}(f\circ E)=: m_j\,,\quad j=0,\ldots ,l+1\,.$$ Soit $s_j$ le point d'intersection de  $D_j$ et $D_{j+1}$ et $s_j'$ celui de $D_j'$ et $D_{j+1}'$. Il existe en ces points des coordonn\'{e}es holomorphes locales
\begin{equation}\label{coordhomo}
(u_j, v_j) : W_{s_j}\iso \mb D_1\times \mb D_1\,,\quad (u_j',v_j') :  W'_{s'_j}\iso \mb D_1\times \mb D_1\,,
\end{equation}
avec $W_{s_j}\subset \inte {\Omega} _{s_j}$ et $W'_{s'_j}\subset \inte {\Omega}{}' _{s'_j}$,
telles que $v_j=0$, resp. $v'_j=0$, soit une \'{e}qua\-tion locale de $D_j$, resp. de $D'_j$ et qui rendent $f\circ E$ et $f'\circ E'$ monomiaux :
$$f\circ E_{|W_{s_j}}=u^{m_{j+1}}_j v^{m_j}_j \quad {\rm et}\quad f'\circ E'_{|W'_{s'_j}}=\frac{\eta}{\eta'}\,u'{}^{m_{j+1}}_j v'{}^{m_{j}}_j,.$$ On obtient ainsi un diff\'{e}o\-mor\-phisme holomorphe $g_{s_j}$ entre les vari\'{e}t\'{e}s \`{a} bords et \`{a} coins $W_{s_j}\cap \mc T_{s_j}=W_{s_j}\cap \mc T_\eta$ et   $W'_{s'_j}\cap \mc T'_{s_j'}=W'_{s'_j}\cap \mc T'_{\eta'}$, en posant  :
\begin{equation}\label{bihololocal}
    g_{s_j}:=(u'_j,v'_j)^{-1}\circ(u_j,v_j) : W_{s_j}\cap \mc T_\eta \longrightarrow W'_{s'_j}\cap \mc T'_{\eta '}\,.
\end{equation}
En supposant $\eta>0$  suffisamment petit, la 3-vari\'{e}t\'{e} $W_{s_j}\cap \mc M_\eta$, ainsi que les composantes connexes $\mf T_j$ et $\mf T_{j+1}$ de $\overline{\mc M_{s_j}\setminus W_{s_j}}$,
 avec $\mf T_j\cap\mc M_{D_j}\neq\emptyset$, sont des tores \'{e}paissis. Leurs inclusions dans $\mc M_{\mc C}$ sont des isomorphismes en homologie. Supposons  que $W'_{s'_j}\cap \mc M'_{\eta '}$ satisfait les m\^{e}mes propri\'{e}t\'{e}s. Quitte \`{a} diminuer  $\eta'>0$, la restriction de $g_{s_j}$ \`{a} $W_{s_j}\cap \mc M_\eta$, \`{a} valeurs dans $W'_{s'_j}\cap \mc M'_{\eta '}$, d\'{e}finit alors un isomorphisme, not\'{e} :
 \begin{equation}\label{isog}
    g_{s_j\,\ast} : H_1(\mc M_{\mc C},\mb Z)\to H_1(\mc M'_{\mc C'},\mb Z)\,.
 \end{equation}
\begin{lema}\label{egmeridiensholo} Soient $\mf c_j$ et $\mf c'_j$, les m\'{e}ridiens associ\'{e}s aux composantes $D_j$ et $D'_j$ respectivement, cf. (\ref{meridienchaine}). Alors
$g_{s_j\,\ast}(\mf c_k)=\mf c'_{k}$, $k:=j,j+1$, pour tout $j=0,\ldots , l$.
\end{lema}
\begin{dem} Supposons $k=j$, le cas $k=j+1$ se traite de la m\^{e}me mani\`{e}re. Quitte \`{a} permuter les coordonn\'{e}es du syst\`{e}me local, nous supposons aussi  que $y_{s_j}=0$ est  une \'{e}quation de $D_j$. Pour $\eta>0$ assez petit, les fibres de $x_{s_j}$ et de $u_j$ sont transverses en tout point de $W_{s_j}\setminus D_{j+1}$, aux \emph{fibres de Milnor}, i.e. aux fibres de $f\circ E$. Sur $D_j\cap W_{s_j}$, le champ de vecteurs holomorphe  qui s'\'{e}crit $u_j\frac{\partial}{\partial u_j}$, se rel\`{e}ve donc (via l'application $u_j$) en un champ $Z$ tangent aux fibres de Milnor -et donc aussi \`{a} $\mc M_{\eta}$. On construit facilement une fonction \`{a} support compact $\alpha  :\inte{W}_{s_j}\cap \mc M_\eta \to \mb R$, telle que le flot au temps 1 de $\alpha Z$ envoie $\mf l_j:=  u_j^{-1}(p)\cap W_{s_j}\cap \mc M_\eta$ sur  $\mf l'_j:= x_{s_j}^{-1}(p)\cap W_{s_j}\cap \mc M_\eta$, o\`{u} $p\neq s_j$ d\'{e}si\-gne un point fix\'{e} de $D_j\cap \inte W_{s_j}$. Soit $K\subset D_j\cap \inte W_{s_j}$, $s_j\in \inte K$,  $p\notin K$, un disque conforme ferm\'{e}. Quitte \`{a} diminuer $\eta>0$, la restriction de $x_{s_j}$ \`{a} $\mc M_{s_j}\setminus x_{s_j}^{-1}(K)$ est encore une fibration en cercles triviale; ainsi une fibre $\mf l''_j$ de la restriction de $\rho_{D_j}$ \`{a} $\mc M_{D_j}\cap \mc M _{s_j}\subset\partial(\mc M_{s_j}\setminus x_{s_j}^{-1}(K))$, est homologue \`{a}  $\mf l'_j$. Finalement on obtient :
$$
 [\mf l_j]= [\mf l'_j]=[\mf l''_j] = \mf c_j\in H_1(\mc M_{\mc C},\mb Z)\,.
$$
Supposons  $\eta'>0$ assez petit, pour que les \'{e}galit\'{e}s similaires dans  $\mc M'_{\mc C'}$ soient satisfaites. Pour achever la d\'{e}monstration, il suffit de remarquer que par construction, $g_{s_j}$ envoie les fibres de $u_j$, resp. de $v_j$, sur les fibres de $u'_j$, resp. de $v'_j$.
\end{dem}

\indent \textit{\'Etape 2. }
Donnons-nous maintenant des hom\'{e}omorphismes
\begin{equation}\label{homeodivis}
g_{D_j} : \mc T_{D_j} \longrightarrow \mc T'_{D'_j}\,,\quad j=1,\ldots , l
\end{equation}
tels que :
\begin{enumerate}[(a)]
\item\label{egbord} $g_{D_j}(\mc T_{D_{j}}\cap \mc T_{s_j})=\mc T_{D_{j}'}'\cap\mc T'_{s'_j}$,
  \item\label{eghopf} $g_{D_j}$ conjugue les fibrations de Hopf : il existe un hom\'{e}omorphisme $\varsigma_{D_j} :K_{D_j}\to K'_{D'_j}$, tel que  $\varsigma_{D_j}\circ\rho _{D_j}(m)= \rho' _{D'_j}\circ g_{D_j}(m)$, $m\in \mc T_{D_j}$,
      \item\label{conjj} le morphisme $g_{D_j\,\ast} : H_1(\mc M_{\mc C},\mb Z)\to H_1(\mc M'_{\mc C'},\mb Z)$ induit\footnote{via les identifications $H_1(\mc M_{D_j},\mb Z)\simeq H_1(\mc M_{\mc C},\mb Z)$ et $H_1(\mc M'_{D'_j},\mb Z)\simeq H_1(\mc M'_{\mc C'},\mb Z)$ donn\'{e}s par les inclusions.} par la restriction de $g_{D_j}$ \`{a} $\mc M_{D_j}$, \`{a} valeurs dans $\mc M'_{D'_j}$, v\'{e}rifie : $g_{D_j\,\ast} (\mf c_k)=\mf c'_k$, $k=j\pm 1$, o\`{u} $g_{D_j\,\ast}$.
\end{enumerate}
Remarquons que l'\'{e}galit\'{e} (\ref{conjj}) pour $k=j$, se d\'{e}duit de (\ref{eghopf}) et que le cas $k=j-1$ est \'{e}quivalent au cas $k=j+1$,  d'apr\`{e}s les formule d'indices (\ref{formindice}) et l'assertion (\ref{un}) de  (\ref{zariskilejeune}). Ainsi, la construction de $g_{D_j}$ se fait sans peine, apr\`{e}s avoir trivialis\'{e} les fibrations de Hopf.\\

\indent \textit{\'Etape 3. } Il reste \`{a} construire, pour chaque composante connexe $\mf T$ de
$\mc T_{s_j}\setminus W_{s_j}$,  $j=0,\ldots,l$, un hom\'{e}o\-mor\-phisme d\'{e}fini sur $\mf T$,  \`{a} valeur sur une composante connexe $\mf T'$ de $\mc T'_{s'_j}\setminus W'_{s'_j}$, qui se recolle avec  $g_{s_j}$ et $g_{D_{j'}}$, $j' =j$ ou $j+1$. Pour cela fixons  un hom\'{e}omorphisme $\Lambda $ de $\mf T$ sur $[0,1]\times \mb S^1\times\mb D_1$. Donnons-nous aussi une fibration en disques ${\rho _{\mf T}} : \mf T \to C_{\mf T} := D_{j'}\cap\mf T$, qui co\"{\i}ncide avec $\rho _{D_{j'}}$ sur une composante connexe de $\Lambda ^{-1}(\{0,1\}\times \mb S^1\times\mb D_1)$ et qui, sur l'autre composante, co\"{\i}ncide avec une coordonn\'{e}e homog\'{e}n\'{e}isante  d\'{e}finie en (\ref{coordhomo}). On proc\`{e}de de m\^{e}me avec $ \mf  T'$. On constate que les restrictions de $g_{s_j}$ et $g_{D_{j'}}$ \`{a} $\partial \mf T$, conjuguent les fibrations construites. Pour conclure, il suffit d'appliquer le lemme suivant, en utilisant pour cela le lemme (\ref{egmeridiensholo}).

\begin{lema}\label{extenauxtore}
Soient $\phi_0$ et $\phi_1$ deux hom\'{e}omorphismes du tore plein  $\mb S^1\times \mb D_1$ sur lui m\^{e}me, qui commutent \`{a} la premi\`{e}re projection, i.e. $\phi _k( \theta, z)= (\theta, \underline{\phi}_k (\theta,z))$, $k=0,1$. Si leurs restrictions \`{a} $\mb S^1\times \partial \mb D_1$ induisent l'identit\'{e} en homologie, il existe  un hom\'{e}omorphisme  $\Phi$ de $[0,1]\times\mb S\times \mb D_1$ sur lui-m\^{e}me, qui commute aux deux premi\`{e}res projections, i.e. $\Phi (\theta, z, t)= (t,\theta, \underline{\Phi}_t(\theta, z))$ et tel que : $\Phi_0 =\underline{\phi}_0$, $\Phi_1 =\underline{\phi}_1$, et $\underline{\Phi} _t(\theta, z)=(\theta, z)$ si $\frac{1}{3}\leq t\leq\frac{2}{3}$.
\end{lema}
\begin{dem} Les applications continues $\wt\phi _k : \theta \mapsto \underline{\phi }_k(\theta,\cdot)$, $k=0,1$, de $\mb S^1$ dans le groupe  $\mr{Aut}(\mb S^1)$  des hom\'{e}omorphismes (pr\'{e}servant l'orientation) de  $\mb S^1$ dans lui-m\^{e}me, sont homotopes. En effet  l'application $$(\psi_{\tau})_{\tau\in [0,1]}\mapsto \frac{1}{2i\pi }\int_{\tau\mapsto \psi_\tau(1)}\frac{dz}{z}$$ est un isomorphisme du groupe fondamental de $\mr{Aut}(\mb S^1)$ sur $\mb Z$; or $$\int_{\tau\mapsto \phi _k(e^{2i\pi\tau}, \,1)}\frac{dz}{z}=0,$$ car l'automorphisme de $H_1(\mb S^1\times \mb S^1,\mb Z)$ induit par $\phi _k$ est l'identit\'{e}.
Fixons  des homotopies $t\mapsto \wt\Phi _{k,\,t}\in C^0(\mb S^1, \mr{Aut}(\mb S^1))$, $t\in[0,1]$, $\wt \Phi _{k,\,0}=\wt\phi _k$, $\wt\Phi _{k,\,1}=(\theta\mapsto id_{\mb S^1})$. Il suffit de poser :
$$\underline{\Phi}_t (\theta,z)  = \left\{
\begin{array}{l}
\wt\Phi_{0,\,|z|+ 3t-1}(\theta)( \frac{z}{|z|} )\,,\quad\hbox{\rm si}\quad 0\leq 1-3t\leq |z|\leq 1 \,,\\
\underline{\phi}_0(\theta, \frac{z}{1-3t})\,,\quad \hbox{\rm si}\quad 0\leq |z|\leq 1-3t\,,\\
(\theta, z)\,,\quad \hbox{\rm si }\quad \frac{1}{3}\leq t\leq \frac{2}{3} \,,\\
\underline{\phi}_1(\theta, \frac{z}{3t-2})\,,\quad \hbox{\rm si}\quad 0\leq |z|\leq 3t-2\,,\\
\wt\Phi_{1,\,|z|+ 2-3t}(\theta)( \frac{z}{|z|} )\,,\quad\hbox{\rm si}\quad 0\leq 3t-2\leq |z|\leq 1\,.
\end{array}
\right.$$
\end{dem}

\subsubsection{Construction de $G_{\mc C}$, lorsque $\mc C$ est une branche morte ou une transform\'{e}e stricte } Consid\'{e}rons d'abord le cas d'une  une branche morte de $\mc D$, not\'{e}e $\mc C = \{D_j\}_{j=0,\ldots ,l}$, $v(D_0)\geq 3$ et d\'{e}signons par $\mc C':=\kappa _1(\mc C)=\{D'_j\}_{j=0,\ldots ,l}$, $D'_j:=\kappa(D_j)$, la branche morte de $\mc D'$ correspondante. Nous pouvons encore effectuer, dans ce contexte, toutes les constructions pr\'{e}c\'{e}dentes, sauf pour la composante d'extr\'{e}mit\'{e} : pour   $\{s_j\}:=D_j\cap D_{j+1}$, $j=0,\ldots ,l-1$, nous construisons, avec les m\^{e}mes notations qu'en  (\ref{bihololocal}), un hom\'{e}omor\-phis\-me  $g_{s_j}$  et, pour chaque composante de valence deux, un hom\'{e}omorphisme $g_{D_j}$ comme en (\ref{homeodivis}). Dans $H_1(\mc M'{}^\circ_{\mc C'}, \mb Z)$, nous avons encore les \'{e}galit\'{e}s  $$g_{s_j\,\ast}(\mf c_k)=\mf c'_k,\quad k=j, j+1,\quad j=0,\ldots, l-1,$$
 pour les m\^{e}mes raisons qu'au  lemme (\ref{egmeridiensholo}) et gr\^{a}ce \`{a} (\ref{egalmeridien});  les  $g_{s_j}$ et $g_{D_j}$ se recollent donc,  comme \`{a} l'\'{e}tape 3 ci-dessus. Il ne reste plus qu'\`{a} \'{e}tendre $g_{s_{l-1}}$ le long de $D_l$.
Pour cela, nous supposerons comme pr\'{e}c\'{e}demment  que $\eta,\eta'>0$ sont assez petits pour que les composantes connexes de $\overline{\mc T_{s_{l-1}}\setminus W_{s_{l-1}}}$  et $\overline{\mc T_{s_{l-1}'}'\setminus W_{s_{l-1}'}'}$
soient des tores \'{e}paissis.
Il suffit alors de
construire un hom\'{e}omorphisme $g$ de la composante connexe $\mf T$
de
$(\mc T_{s_{l-1}}\setminus W_{s_{l-1}})\cup \mc T_{D_l}$ contenant $D_{l}$, sur la composante connexe $\mf T '$ de $(\mc T'_{s_{l-1}}\setminus W'_{s_{l-1}})\cup \mc T'_{D'_l}$ contenant $D_{l}'$, qui co\"{\i}ncide avec $g_{s_{l-1}}$ sur le tore plein $\mf T\cap W_{s_{l-1}}$. Fixons encore des fibrations $\rho _{\mf T} : \mf T\to\mf T \cap D_l$ et $\rho '_{\mf T'}: \mf T'\to \mf T'\cap D'_l$, qui co\"{\i}ncident avec les fibrations de Hopf sut $\mc T_{D_l}$, resp. sur $\mc T'_{D_l}$ et avec une coordonn\'{e}e homog\'{e}n\'{e}isante (\ref{coordhomo})   sur $\mf T\cap W_{s_{l-1}}$, resp. sur $\mf T'\cap W'_{s_{l-1}}$. Visiblement $\mf T$ et  $\mf T'$ sont hom\'{e}omorphes \`{a} $\mb D_1\times \mb D_1$, les fibrations $\rho _{\mf T}$ et $\rho '_{\mf T}$ correspondant \`{a} la premi\`{e}re projection. Pour achever la construction de $G_{\mc C}$, il suffit d'utiliser le lemme suivant dont la d\'{e}monstration est similaire \`{a} celle de  (\ref{extenauxtore}).
\begin{lema}
Soit $\phi $ un hom\'{e}omorphisme de $\partial \mb D_1\times \mb D_1$ sur lui m\^{e}me, qui commute \`{a} la premi\`{e}re projection : $\phi (\theta, p)=(\theta,\underline{\phi }(\theta,p))$ et qui, en restriction \`{a} $\partial \mb D_1\times \partial \mb D_1$, induit l'identit\'{e} en d'homologie. Alors $\phi $ se prolonge en un hom\'{e}omorphisme $\Phi $ de $\mb D_1\times \mb D_1$ sur lui-m\^{e}me, qui commute aussi \`{a} la premi\`{e}re projection.
\end{lema}
\begin{dem} Comme pour (\ref{extenauxtore}), il existe une application continue $t\mapsto \wt\Phi _t\in C^0(\mb S^1, \mr{Aut}(\mb S^1))$, $t\in[0,1]$, telle que $\wt \Phi _0(\theta)(\vartheta)=\vartheta $ et $\wt \Phi _1(\theta)(\vartheta)=\phi(\theta,\vartheta) $.
On pose $\Phi (z',z''):=(z', \underline{\Phi} (z',z''))$, avec :
$$\underline{\Phi} (z',z''):=\left\{
\begin{array}{l}
|z''|\cdot \wt\Phi _{|z'|}(\frac{z'}{|z'|})(\frac{z''}{|z''|})\,,\quad {\rm si}\quad |z'|\leq |z''|\leq 1\,,\\
|z''|\cdot \wt\Phi _{1+|z'|-\frac{|z''|}{|z'|}}(\frac{z'}{|z'|})(\frac{z''}{|z''|})
\,,\quad {\rm si}\quad |z'|^2\leq |z''|\leq |z'|\,,\\
|z'|^2\cdot \phi (\frac{z'}{|z'|}, \frac{z''}{|z'|^2})\,,\quad {\rm si}\quad |z''|\leq |z'|^2\leq 1\,.
\end{array}
\right.$$

\end{dem}

Consid\'{e}rons maintenant $D_1$ et $D'_1 := \kappa(D_1)$, des transform\'{e}es strictes  de composantes irr\'{e}ductibles de $S$ et $S'$ respectivement. Les composantes adjacentes $D_0\in \comp(\mc D)$, resp. $D'_0:=\kappa(D_0)\in \comp(\mc D')$, sont de valence $\geq 3$. Notons $\{s\}:=D_0\cap D_1$ et $\{s'\}:= D'_0\cap D'_1$, $\mc C:=\{D_0,D_1\}$,  $\mc C':=\{D'_0,D'_1\}$ et posons : $\mc M_{\mc C}:=\mc M_{s}\cup\mc M_{D_1}$, $\mc T_{\mc C}:=\mc T_s\cup\mc T_{D_1}$, $\mc M_{\mc C'}':=\mc M_{s'}'\cup\mc M_{D'_1}'$ et $\mc T'_{\mc C'}:=\mc T'_{s'}\cup\mc T'_{D'_1}$. Avec les m\^{e}mes notations, nous construisons comme en (\ref{isog}) un biholomorphisme $g_s : W_s\cap\mc T_{\eta}\to W'_{s'}\cap\mc T_{\eta'}'$. Pour les m\^{e}mes raisons qu'en (\ref{egmeridiensholo}), celui-ci v\'{e}rifie le \'{e}galit\'{e}s $g_{s\,\ast}(\mf c_k)=\mf c'_k$, $k=0,1$, o\`{u} $\mf c_k$, resp. $\mf c'_k$, sont les classes dans $H_1(\mc M_{\mc C},\mb Z)$, resp. dans $H_1(\mc M'_{\mc C'},\mb Z)$,  d'une fibre quelconque  de la fibration de Hopf $\rho _{D_k}$ restreinte \`{a}  $\mc M_s\cap \mc M_{D_k}$,  resp.  $\rho ' _{D'_k}$ restreinte \`{a}  $\mc M'_{s'}\cap \mc M'_{D'_k}$. Remarquons  que la restriction de $h_3$ \`{a} $\mc M_{\mc C}\cap\partial \mc B$ (qui est une composante du bord de $\mc M_\eta$), \`{a} valeurs dans $\mc M'_{\mc C'}\cap \partial\mc B'$, v\'{e}rifie aussi l'\'{e}galit\'{e}\footnote{avec les identifications donn\'{e}es par les inclusions :  $H_1(\mc M_{\mc C}\cap\partial \mc B,\mb Z)\simeq H_1(\mc M_{\mc C},\mb Z)$ et  $H_1(\mc M'_{\mc C'}\cap\partial \mc B',\mb Z)\simeq H_1(\mc M'_{\mc C'},\mb Z)$.}
$$h_{3\,\ast}(\mf c_k)=\mf c'_k\quad {\rm dans}\quad H_1(\mc M'_{\mc C'},\mb Z)\,,\quad  \,,\quad k=0,1\,.$$
En effet, par construction $h_{3}$ et $h$ sont fondamentalement \'{e}quivalents; leurs actions sur $\Gamma$ diff\`{e}rent donc d'un automorphisme int\'{e}rieur. En passant \`{a} l'homologie $h_{*}=h_{3*}$.
Le th\'{e}or\`{e}me (\ref{pk}) affirme que l'image par
$h_{*}$ du m\'{e}ridien $\mf m_{D_{1}}$ du sous-groupe p\'{e}riph\'{e}rique $\mc P\subset\Gamma$ associ\'{e} \`{a} $\mc C$  n'est autre que le m\'{e}ridien
$\mf m_{D_{1}'}\in\mc P'\subset\Gamma'$. Comme les isomorphismes $\mc P\cong H_{1}(\mc M_{\mc C},\mb Z)$ et $\mc P'\cong H_{1}(\mc M_{\mc C'}',\mb Z)$ font correspondre $\mf m_{D_{1}}$ \`{a} $\mf c_{1}$ et $\mf m_{D_{1}'}'$ \`{a} $\mf c_{1}'$ on obtient l'\'{e}galit\'{e} $h_{3*}(\mf c_{1})=\mf c_{1}'$.
D'autre part, d'apr\`{e}s la remarque \ref{incompseifert}, l'inclusion naturelle $H_{1}(\mc M_{\mc C},\mb Z)\hookrightarrow H_{1}(B_{D_{0}})$ envoie $\mf c_{0}$ sur la classe d'homologie de $\mf c_{D_{0}}\in\pi_{1}(B_{D_{0}})\subset\Gamma$ repr\'{e}sent\'{e}e\footnote{Ici nous utilisons  que les d\'{e}singularisations de $S$ et $S'$ sont minimales et donc $v(D_{0})=v(D_{0}')\ge 3$.} par une fibre de la fibration de Seifert de $D_{0}$. Nous avons une description analogue pour $\mc M_{\mc C}'$.
Comme $h_{3}$ conjugue les fibrations de Seifert de $B_{D_{0}}$ et $B_{D_{0}'}'$ il en r\'{e}sulte que $h_{3*}(\mf c_{0})=\mf c_{0}'$.

 Soit $H_{D_1} : \mc T_{D_1}\to \mc T'_{D'_1}$ un hom\'{e}omorphisme qui co\"{\i}ncide avec $h_3$ en restriction \`{a} $\mc M_{\mc C}\cap \mc B$ et qui commute aux fibrations de Hopf : $H_{D_1}(K_{D_1})=K'_{D'_1}$ et  $H_{D_1}\circ \rho _{D_1} =\rho '_{D'_1}\circ H_{D_1}$. Comme \`{a} l'\'{e}tape 3 pr\'{e}c\'{e}dente, nous construisons un hom\'{e}omorphisme $G_{\mc C} : \mc T_{\mc C}\to \mc T'_{\mc C'}$ qui \'{e}tend $g_s$, qui est \'{e}gal \`{a} $H _{D_0}$ en restriction \`{a} $\mc M_{\mc C}\cap \mc M_{D_0}$ et \`{a} $H_{D_1}$ en restriction \`{a} $\mc M_{\mc C}\cap\partial \mc B$.

\subsubsection{Modification par twists de Dehn.}\label{dehn} Nous allons maintenant
modifier l'hom\'{e}omorphisme $G$ obtenu par recollement (\ref{homoprovisoire}), en le composant  \`{a} droite par un hom\'{e}omorphisme $\Psi :\mc T_\eta\to \mc T_\eta$ qui vaut l'identit\'{e} sur chaque bloc  $\mc T_{D}$, $D\in \mf R$ et tel que $G\circ \Psi$ satisfait le th\'{e}or\`{e}me (\ref{teomarquagexcellent}). En posant $\Psi_{\mc C}:=\Psi_{|\mc T_{\mc C}}$,  il suffit de prouver l'assertion suivante, pour  tout $\mc C =:\{D_{j}\}^l_{j=0}$ d\'{e}signant une cha\^{\i}ne de $\mf C$, une branche morte, ou bien une paire de composantes associ\'{e}e \`{a} une transform\'{e}e stricte.

\begin{enumerate}
  \item[$(\star\star)$] \it Il existe un hom\'{e}omorphisme $\Psi_{\mc C}: \mc T_{\mc C}\to \mc T_{\mc C}$, $\Psi_{\mc C}(\mc T_{\mc C}\cap\mc D)=\mc T_{\mc C}\cap\mc D$, \`{a} support dans l'int\'{e}rieur de $({\Omega}_{s_0}\setminus \{s_0\})$, $\{s_0\}:=D_0\cap D _1$, tel que   $\Psi_{\mc C|\mc M_{\mc C}}$ et
   $G^{-1}\circ h_3 : \mc M_{\mc C}\to \mc M_{\mc C}$  sont homotopes relativement au bord de  $\mc M_{\mc C}$, i.e. il existe une homotopie $F_t : \mc M_{\mc C}\to \mc M_{\mc C}$, $t\in [0,1]$, telle que $F_0= G^{-1}\circ h_3$, $F_1=\Psi_{|\mc M_{\mc C}}$ et $F_t(m)=m$, pour tout  $t\in [0,1]$ et  $m\in \partial \mc M_{\mc C} $.
\end{enumerate}
Rappelons qu'\`{a} toute application continue $K$ d'une vari\'{e}t\'{e} \`{a} bord $X$ dans elle m\^{e}me, qui vaut l'identit\'{e} en restriction \`{a} un sous-ensemble $A\subset  X$, est associ\'{e} un \emph{morphisme de variation relative \`{a} $A$}, cf. \cite{AVG2} :
$$
{\rm var}_K : H_1(X, A ; \mb Z)\to H_1(X, \mb  Z)\,, \quad [\delta] \mapsto [K(\delta)-\delta]\,.
$$
Celui-ci est un invariant de la classe d'homotopie relative \`{a} $A$, de $K$. Notons que si $K_{\ast } : H_1(X,\mb Z)\to H_1(X,\mb Z)$ d\'{e}signe le morphisme induit par $K$ et $i_\ast : H_1(X,\mb Z)\to H_1(X,A; \mb Z)$ celui induit par l'inclusion $(X,\emptyset) \subset (X,A)$, on a l'identit\'{e} : $K_\ast  = {\rm id}_{H_1(X,\mb Z)} + {\rm var}_K\circ i_\ast$. Nous utiliserons le
r\'{e}sultat suivant.
\begin{prop}\label{eleinparticulier}
Deux hom\'{e}omorphismes $\chi_0$ et $\chi_1 : \mc M_{\mc C}\to \mc M_{\mc C}$ \'{e}gaux \`{a} l'identit\'{e} en restriction \`{a} $\partial \mc M_{\mc C}$, sont homotopes relativement \`{a} $\partial \mc M_{\mc C}$, si et seulement si leurs morphismes de variation sont \'{e}gaux : $${\rm var}_{\chi_0}={\rm var}_{\chi_1} : H_1(\mc M_{\mc C}, \partial \mc M_{\mc C} ; \mb Z)\to H_1(\mc M_{\mc C}, \mb Z)\,.$$
\end{prop}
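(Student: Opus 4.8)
Le plan de la preuve est le suivant. Le sens direct sera imm\'{e}diat : le morphisme de variation ne d\'{e}pend que de la classe d'homotopie de l'hom\'{e}omorphisme relativement \`{a} $\partial\mc M_{\mc C}$, comme rappel\'{e} juste avant l'\'{e}nonc\'{e}, donc ${\rm var}_{\chi_{0}}={\rm var}_{\chi_{1}}$ d\`{e}s que $\chi_{0}\simeq\chi_{1}$ relativement \`{a} $\partial\mc M_{\mc C}$. Pour la r\'{e}ciproque, je commencerai par \'{e}tablir la formule de composition ${\rm var}_{g\circ f}={\rm var}_{g}+g_{*}\circ{\rm var}_{f}$, valable pour deux hom\'{e}omorphismes $f,g$ de $\mc M_{\mc C}$ \'{e}gaux \`{a} l'identit\'{e} sur $\partial\mc M_{\mc C}$ : elle r\'{e}sulte de $(g\circ f)(\delta)-\delta=\big(g(f(\delta))-f(\delta)\big)+\big(f(\delta)-\delta\big)$, de l'identit\'{e} $g_{*}={\rm id}+{\rm var}_{g}\circ i_{*}$ rappel\'{e}e ci-dessus, et de ce que la classe de $f(\delta)$ dans $H_{1}(\mc M_{\mc C},\partial\mc M_{\mc C};\mb Z)$ vaut $[\delta]+i_{*}{\rm var}_{f}([\delta])$. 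Appliqu\'{e}e \`{a} $\chi_{0}=\chi_{1}\circ(\chi_{1}^{-1}\circ\chi_{0})$ et compte tenu que $\chi_{1*}$ est un automorphisme de $H_{1}(\mc M_{\mc C};\mb Z)$, elle montre que l'hypoth\`{e}se ${\rm var}_{\chi_{0}}={\rm var}_{\chi_{1}}$ \'{e}quivaut \`{a} ${\rm var}_{\chi_{1}^{-1}\circ\chi_{0}}=0$ ; comme par ailleurs $\chi_{0}\simeq\chi_{1}$ relativement \`{a} $\partial\mc M_{\mc C}$ \'{e}quivaut \`{a} $\chi_{1}^{-1}\circ\chi_{0}\simeq{\rm id}$ relativement \`{a} $\partial\mc M_{\mc C}$, il suffira de traiter le cas $\chi_{1}={\rm id}$ : montrer qu'un hom\'{e}omorphisme $\chi$ de $\mc M_{\mc C}$, \'{e}gal \`{a} l'identit\'{e} sur $\partial\mc M_{\mc C}$ et tel que ${\rm var}_{\chi}=0$, est homotope \`{a} ${\rm id}_{\mc M_{\mc C}}$ relativement \`{a} $\partial\mc M_{\mc C}$.

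J'utiliserai ensuite la forme explicite de $\mc M_{\mc C}$ : c'est un tore plein $\mb D\times\mb S^{1}$ quand $\mc C$ est une branche morte, et un tore \'{e}paissi $\mb T\times[-1,1]$ quand $\mc C$ est une cha\^{\i}ne ou la paire attach\'{e}e \`{a} une transform\'{e}e stricte. Dans les trois cas $\mc M_{\mc C}$ est un $K(\pi,1)$ dont le groupe fondamental (ab\'{e}lien libre de rang $1$ ou $2$) est port\'{e} par $\partial\mc M_{\mc C}$, de sorte que $\chi$ agit par l'identit\'{e} sur $\pi_{1}(\mc M_{\mc C})$. Lorsque $\mc C$ est une branche morte, on a $H_{1}(\mc M_{\mc C},\partial\mc M_{\mc C};\mb Z)=0$, l'hypoth\`{e}se est vide, et il faudra montrer que \emph{tout} tel $\chi$ est homotope \`{a} l'identit\'{e} relativement au bord. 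Je le ferai par la th\'{e}orie de l'obstruction : une homotopie relative \`{a} $\partial\mc M_{\mc C}$ reliant $\chi$ \`{a} ${\rm id}$ est une section du fibr\'{e} trivial de fibre $\mc M_{\mc C}$ sur $\mc M_{\mc C}\times[0,1]$ prolongeant la section prescrite sur $\partial(\mc M_{\mc C}\times[0,1])$, et les obstructions successives vivent dans $H^{q}\big(\mc M_{\mc C}\times[0,1],\,\partial(\mc M_{\mc C}\times[0,1]);\,\pi_{q-1}(\mc M_{\mc C})\big)$ ; ces groupes sont nuls pour $q-1\geq 2$ par asph\'{e}ricit\'{e}, et pour $q=2$ le groupe est isomorphe, par dualit\'{e} de Lefschetz, \`{a} $H_{2}(\mc M_{\mc C};\mb Z)\otimes\pi_{1}(\mc M_{\mc C})=0$ puisque $\mc M_{\mc C}$ est un tore plein. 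D'o\`{u} la conclusion dans ce cas.

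Pour le cas d'un tore \'{e}paissi, j'invoquerai la classification des hom\'{e}omorphismes des vari\'{e}t\'{e}s de Haken \cite{Waldhausen} (ou, de fa\c{c}on \'{e}l\'{e}mentaire, le fait que la composante neutre de $\mr{Homeo}(\mb T)$ a le type d'homotopie de $\mb T$) : $\chi$ est isotope relativement \`{a} $\partial\mc M_{\mc C}$ \`{a} une composition de twists de Dehn verticaux $\tau_{\mf c_{0}}^{m}\circ\tau_{\mf c_{l+1}}^{n}$ le long de $\mb T_{\mc C}$, dans les directions des classes de fibres $\mf c_{0}$ et $\mf c_{l+1}$ (ou $\mf c_{0}$ et $\mf c_{1}$ dans le cas d'une transform\'{e}e stricte) des deux fibrations de Seifert adjacentes. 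Pour un arc vertical $\delta$ engendrant $H_{1}(\mc M_{\mc C},\partial\mc M_{\mc C};\mb Z)\cong\mb Z$, le lacet $\tau_{\mf c}(\delta)\cdot\bar{\delta}$ est librement homotope \`{a} une courbe de classe $\mf c$ ; on en d\'{e}duit ${\rm var}_{\chi}([\delta])=m\,\mf c_{0}+n\,\mf c_{l+1}$. Or $\mf c_{0}$ et $\mf c_{l+1}$ sont $\mb Q$-ind\'{e}pendants dans $H_{1}(\mc M_{\mc C};\mb Z)$ : c'est l'assertion (iv) de la proposition \ref{det} si $\mc C$ est une cha\^{\i}ne, et cela r\'{e}sulte de l'incompressibilit\'{e} de $\mb T_{\mc C}$ (cf. \ref{incompseifert}, \ref{bord}) et de la pr\'{e}sentation \ref{homologie} si $\mc C$ provient d'une transform\'{e}e stricte. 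Par cons\'{e}quent ${\rm var}_{\chi}=0$ force $m=n=0$, et $\chi$ est isotope, a fortiori homotope, \`{a} l'identit\'{e} relativement \`{a} $\partial\mc M_{\mc C}$.

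Le point d\'{e}licat sera le cas du tore \'{e}paissi : il faut disposer d'une description pr\'{e}cise de $\pi_{0}$ du groupe des hom\'{e}omorphismes de $\mb T\times[-1,1]$ triviaux au bord -- \`{a} savoir qu'il est librement engendr\'{e} par les deux twists verticaux, sans autre classe d'applications -- ainsi que du calcul de ${\rm var}$ d'un twist vertical ; la $\mb Q$-ind\'{e}pendance de $\mf c_{0}$ et $\mf c_{l+1}$ est d\'{e}j\`{a} acquise via \ref{det}(iv), \`{a} la l\'{e}g\`{e}re extension pr\`{e}s au cas de la paire attach\'{e}e \`{a} une transform\'{e}e stricte. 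Le cas de la branche morte, lui, est de routine une fois connue l'homologie (nulle) du tore plein.
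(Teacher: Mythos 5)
Votre strat\'{e}gie est correcte mais suit une route diff\'{e}rente de celle du texte, et elle contient un \'{e}nonc\'{e} interm\'{e}diaire faux qu'il faut r\'{e}parer. Le texte applique le th\'{e}or\`{e}me de classification d'Eilenberg : les classes d'homotopie relatives \`{a} $\partial\mc M_{\mc C}$ des extensions de l'identit\'{e} du bord sont en bijection avec $H^{1}(\mc M_{\mc C},\partial\mc M_{\mc C};\pi_{1}(\mc M_{\mc C}))$, groupe nul pour une branche morte et isomorphe \`{a} $\mb Z^{2}$ pour un tore \'{e}paissi, puis il identifie explicitement la classe d'Eilenberg au morphisme de variation. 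Vous remplacez ce calcul, dans le cas du tore \'{e}paissi, par la description de $\pi_{0}$ du groupe des hom\'{e}omorphismes de $\mb T\times\mb I$ triviaux au bord comme $\mb Z^{2}$ engendr\'{e} par deux twists de Dehn verticaux. C'est un fait classique (via $\mr{Homeo}_{0}(\mb T)\simeq\mb T$ et Waldhausen), mais c'est pr\'{e}cis\'{e}ment le contenu que le texte red\'{e}montre par la th\'{e}orie de l'obstruction : vous d\'{e}l\'{e}guez la difficult\'{e} \`{a} un th\'{e}or\`{e}me connu plut\^{o}t que vous ne la r\'{e}solvez, ce qui est l\'{e}gitime mais \`{a} assumer comme tel. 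Votre r\'{e}duction au cas $\chi_{1}=\mr{id}$ par la formule ${\rm var}_{g\circ f}={\rm var}_{g}+g_{*}\circ{\rm var}_{f}$ et votre traitement de la branche morte (annulation des groupes d'obstruction, par dualit\'{e} de Lefschetz ou par K\"unneth relatif comme dans le texte) sont corrects.

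Le point qui ne tient pas : vous affirmez que $\chi$ est isotope relativement au bord \`{a} $\tau_{\mf c_{0}}^{m}\circ\tau_{\mf c_{l+1}}^{n}$. Or les twists verticaux le long de deux courbes simples n'engendrent le groupe $\pi_{0}\bigl(\mr{Homeo}(\mb T\times\mb I\ {\rm rel}\ \partial)\bigr)\cong\mb Z^{2}$ que si leurs classes forment une $\mb Z$-base de $H_{1}(\mb T_{\mc C};\mb Z)$. D'apr\`{e}s la proposition \ref{det}, on a $\det(\mf c_{0},\mf c_{l_{\mc C}+1})=\pm\det A$, dont la valeur absolue est $\geq 2$ d\`{e}s que la cha\^{\i}ne contient une composante de valence $2$ (par minimalit\'{e} de la r\'{e}duction, $e_{j}\leq -2$ ; d\'{e}j\`{a} $l_{\mc C}=1$, $e_{1}=-2$ donne $|\det A|=2$) : le couple $(\mf c_{0},\mf c_{l_{\mc C}+1})$ n'est alors qu'une $\mb Q$-base, et vos deux twists n'engendrent qu'un sous-groupe d'indice $|\det A|$ ; l'\'{e}nonc\'{e} tel quel est donc faux pour la plupart des cha\^{\i}nes (il est exact pour la paire associ\'{e}e \`{a} une transform\'{e}e stricte et pour $l_{\mc C}=0$). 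La r\'{e}paration est imm\'{e}diate : prenez les twists le long d'une vraie $\mb Z$-base, par exemple $(\mf c_{j},\mf c_{j+1})$ donn\'{e}e par l'assertion (ii) de \ref{det} ; alors ${\rm var}_{\chi}(\mf d)=m\,\mf c_{j}+n\,\mf c_{j+1}=0$ force $m=n=0$ directement, sans m\^{e}me recourir \`{a} l'assertion (iv). Avec cette correction, votre argument est complet.
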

Remarquons que si $\mc C$ est une branche morte, alors $(\mc M_{\mc C},\partial \mc M_{\mc C})$ est hom\'{e}o\-mor\-phe \`{a} $(\mb S^1\times \mb D_1, \mb S^1\times \mb S^1)$ et $H_1(\mc M_{\mc C}, \partial \mc M_{\mc C} ; \mb Z)=0$. Pour obtenir $(\star\star)$, on pose alors $\Psi_{\mc C}={\rm id}_{\mc T_{\mc C}}$.

Si $\mc C$ n'est pas une branche morte, l'assertion $(\star\star)$ d\'{e}coule imm\'{e}\-dia\-te\-ment du lemme suivant.
\begin{lema}\label{realvar} Supposons que  $\mc C$ est une cha\^{\i}ne ou est associ\'{e} \`{a} une transform\'{e}e stricte. Alors pour tout morphisme de $ L : H_1(\mc M_{\mc C}, \partial \mc M_{\mc C} ; \mb Z)\to H_1(\mc M_{\mc C}, \mb Z)$, il existe un  hom\'{e}omorphisme $\Psi : \mc T_{\mc C}\to \mc T_{\mc C}$ \`{a} support dans  $\inte{\Omega}_{s_0}\setminus \{s_0\}$, v\'{e}rifiant $\Psi_{\mc C}(\mc T_{\mc C}\cap\mc D)=\mc T_{\mc C}\cap\mc D$ et tel que $L$ soit le morphisme de variation de la restriction de $\Psi$ \`{a} $\mc M_{\mc C}$ :   $L={\rm var}_{\Psi_{|\mc M_{\mc C}}}$.
\end{lema}
\begin{dem2}{du lemme (\ref{realvar})} Visiblement
$H_1(\mc M_{\mc C},\partial \mc M_{\mc C},\mb Z)= \mb Z\mf d
$  est engendr\'{e} par la classe d'un chemin quelconque $\delta$  reliant les deux composantes connexes de $\partial \mc M_{\mc C}$. Gr\^{a}ce \`{a}  la formule\footnote{En effet, ${\rm var}_{\chi_{1}\chi_{2}}\mf d=[\chi_{1}\chi_{2}\delta-\delta]=[\chi_{1}\chi_{2}\delta-\chi_{2}\delta]+[\chi_{2}\delta-\delta]={\rm var}_{\chi_{1}}\mf d+{\rm var}_{\chi_{2}}\mf d$ car $[\chi_{2}\delta]=[\delta]=\mf d$ dans $H_{1}(\mc M_{\mc C},\partial \mc M_{\mc C};\mb Z)$.} ${\rm var}_{\chi_1\circ \chi_2} = {\rm var}_{\chi_1}+{\rm var}_{\chi_2}$, il suffit de d\'{e}terminer  $\Psi$ pour $L=L_k : [\delta]\mapsto \mf c_k$, $k=0,1$, o\`{u} $\mf c_0$ et $\mf c_1$ sont les m\'{e}ridiens associ\'{e}s \`{a} $D_0$ et \`{a} $D_1$. En effet ceux-ci forment une $\mb Z$-base de $H_1(\mc M_{\mc C},\mb Z)$, d'apr\`{e}s la proposition \ref{det}. Pour $k=0$ ou $1$, fixons  comme  (\ref{coordhomo})  des coordonn\'{e}es $(u, v)$
au point $s_0$ dans lesquelles l'application $f\circ E$ est monomiale et $v=0$ est une \'{e}quation de $D_k$.  L'hom\'{e}omorphisme (twist de Dehn) $\Psi : \mc T_{\mc C}\to \mc T_{\mc C}$ d\'{e}finit par
$$
u\circ \Psi=u\,,\quad v\circ\Psi = \left\{
  \begin{array}{ll}
    e^{2i\pi (3|u|-1)}\cdot v, & \hbox{\rm si } \frac{1}{3}\leq |u|\leq \frac{2}{3}\,, \\
    v, & \hbox{\rm si non}\,,
  \end{array}
\right.
$$
convient.\end{dem2}

\begin{dem2}{de la Proposition (\ref{eleinparticulier})}

La preuve consiste \`{a} appliquer convenablement le th\'{e}or\`{e}me de classification d'Eilenberg, cf. \cite[Theorem V.6.7]{Whitehead}, dont nous rappelons l'\'{e}nonc\'{e}  :
\begin{teo}
Soient $Y$ un espace topologique $(n-1)$-connexe avec $\pi=\pi_{n}(Y)$ ab\'{e}lien, $(X,A)$ un CW-complexe relatif et $f_{0}:X\to Y$ une application continue. Supposons que
\begin{enumerate}[(1)]
\item $Y$ est $q$-simple pour $n+1\le q\le\dim(X,A)$,
\item $H^{q}(X,A;\pi_{q}(Y))=0$ pour $n+1\le q\le\dim(X,A)$,
\item $H^{q+1}(X,A;\pi_{q}(Y))=0$ pour $n+1\le q\le\dim(X,A)-1$.
\end{enumerate}
Alors la correspondance $f\mapsto (f_{0},f)^*\imath^{n}(Y)$ induit une bijection entre l'ensemble de classes d'homotopie relatives \`{a} $A$ d'extensions de $f_{0|A}$ et le groupe de cohomologie $H^{n}(X,A;\pi)$.
\end{teo}
Dans cet \'{e}nonc\'{e}
$\imath^{n}(Y)\in H^{n}(Y;\pi)\cong\mr{Hom}(H_{n}(Y),\pi)$ s'identifie \`{a} l'inverse de l'isomorphisme de Hurewicz $\pi_{n}(Y)\stackrel{\sim}{\to}H_{n}(Y)$. Si $Y$ est un CW-complexe, alors
$\imath^{n}(Y)$	envoie chaque $n$-cellule de $Y$ sur l'unique \'{e}l\'{e}ment de $\pi=\pi_{n}(Y)$ obtenu en \'{e}crasant le $(n-1)$-squelette de $Y$ au point base. D'autre part
$(f_{0},f_{1})^*=(\mf i^*\times)^{-1}\circ\partial^*\circ F^*_{(f_{0},f_{1})}$, o\`{u} l'application
$F_{(f_{0},f_{1})}:X\times\partial\mb I\cup A\times\mb I\to Y$  est d\'{e}finie par $F_{(f_{0},f_{1})}(x,t)=f_{t}(x)$ si $x\in X$ et $t\in\partial\mb I=\{0,1\}$ et par  $F_{(f_{0},f_{1})}(a,t)=f_{0}(a)=f_{1}(a)$ si $a\in A$ et $t\in\mb I:=[0,1]$. Finalement $\partial^*:H^{n}(X\times\partial\mb I\cup A\times\mb I;\pi)\to H^{n+1}(X\times\mb I,X\times\partial\mb I\cup A\times\mb I;\pi)$ est le morphisme de connexion et
$\mf i^*\times:H^{n}(X,A;\pi)\to H^{n+1}(X\times\mb I,X\times\partial\mb I\cup A\times\mb I;\pi)$ est l'isomorphisme induit par le produit par le g\'{e}n\'{e}rateur $\mf i\in H^{1}(\mb I,\partial\mb I)$, en remarquant que $(X\times\mb I,X\times\partial\mb I\cup A\times\mb I)=(X,A)\times(\mb I,\partial\mb I)$.

\bigskip

Notons encore $\mb T=\mb S^{1}\times \mb S^{1}$ et $\mb I=[0,1]$. Si $\mc C$ est une cha\^{\i}ne, resp. une branche morte, nous appliquons le th\'{e}or\`{e}me avec $X=Y:= \mc M_{\mc C}$ qui est hom\'{e}omorphe \`{a} $\mb T\times\mb I$, resp. \`{a}  $X=Y\cong\mb D\times\mb S^{1}$, et est donc  est un espace de Eilenberg-MacLane $K(\pi,1)$, avec $\pi=\pi_{1}(\mb T\times\mb I)=H_{1}(\mb T\times\mb I)\cong\mb Z^{2}$ (resp. $\pi=\mb Z$). Les hypoth\`{e}ses du th\'{e}or\`{e}me pr\'{e}c\'{e}dent sont donc trivialement satisfaites. Nous posons aussi  $A:= \partial \mc M_{\mc C}\cong \mb T\times\partial\mb I$, resp. $A\cong\partial\mb D\times\mb S^{1}$ et $f_{0}=\mr{id}$.

Si $\mc C$ est une branche morte alors $$H^{1}(X,A;\pi)=H^{1}(\mb D\times\mb S^{1},\partial\mb D\times\mb S^{1},\mb Z)=H^{1}((\mb D,\partial\mb D)\times(\mb S^{1},\emptyset))
=0,$$ par la formule de K\"unneth relative
et par le fait que $H^{i}(\mb D,\partial\mb D)=0$ pour $i=0,1$. Dans ce cas on obtient donc que toutes les extension de l'identit\'{e} sur $A$ sont homotopes relativement \`{a} $A$.

Dans le cas o\`{u}
$\mc C$ est une cha\^{\i}ne (de $\mf C$ ou une paire de composantes associ\'{e}e \`{a} une transform\'{e}e stricte) nous obtenons que les classes d'homotopie relatives \`{a} $A$ d'extensions de l'identit\'{e} sont en correspondance bijective avec $H^{1}(\mb T\times\mb I,\mb T\times\partial\mb I;\mb Z^{2})\cong\mb Z^{2}$
Il suffit de montrer que  si $f:\mb T\times\mb I\to\mb T\times\mb I$ est une extension de l'identit\'{e} sur $\mb T\times\partial\mb I$ telle que
$\mr{var}_{f}=0$, alors $(\mr{id},f)^*\imath^{1}(\mb T\times\mb I)=(\mr{id},\mr{id})^*\imath^{1}(\mb T\times\mb I)$. En fait, pour ne pas avoir \`{a} travailler avec le morphisme de connexion, il suffit de voir que
$$F_{(\mr{id},f)}^*\imath^{1}(\mb T\times\mb I)=F_{(\mr{id},\mr{id})}^*\imath^{1}(\mb T\times\mb I)\in H^{1}(\mb T\times\mb I\times\partial\mb I\cup\mb T\times\partial\mb I\times\mb I;\mb Z^{2}).$$
Comme
$\mb T\times\mb I\times\partial\mb I\cup\mb T\times\partial\mb I\times\mb I=\mb T\times\partial(\mb I\times\mb I)$, on voit  que
$H_{1}(\mb T\times\mb I\times\partial\mb I\cup\mb T\times\partial\mb I\times\mb I)\cong H_{1}(\mb T)\oplus H_{1}(\partial(\mb I\times\mb I))\cong \mb Z^{3}$, d'o\`{u}
$H^{1}(\mb T\times\mb I\times\partial\mb I\cup\mb T\times\mb\partial I\times\mb I;\mb Z^{2})\cong\mr{Hom}(H_{1}(\mb T\times\mb I\times\partial\mb I\cup\mb T\times\partial\mb I\times\mb I),\mb Z^{2})\cong\mb Z^{3}\otimes\mb Z^{2}$.

Rappelons que $\mf c_{0},\mf c_{1}$ est une base de $H_{1}(\mb T\times \mb I)=H_{1}(\mb T)$ telle que $\mf c_{0}\subset\mb D^*\times\{e^{i\theta}\}$ et $\mf c_{1}\subset\{z\}\times\mb S^{1}$. Soit $\mf e$ un g\'{e}n\'{e}rateur de $H_{1}(\partial(\mb I\times\mb I))\cong\mb Z$.
Il est facile \`{a} voir que $\imath^{1}(\mb T\times\mb I)\in H^{1}(\mb T\times\mb I;\pi_{1}(\mb T\times\mb I))\cong \mr{Hom}(H_{1}(\mb T\times\mb I),H_{1}(\mb T\times\mb I))$ s'identifie a l'application identit\'{e} et alors $F_{(\mr{id},f)}^*\imath^{1}(\mb T\times\mb I)\cong F_{(\mr{id},f)*}$, o\`{u}
$$F_{(\mr{id},f)*}:H_{1}(\mb T\times\partial(\mb I\times\mb I))\cong\mb Z \mf c_{0}\oplus\mb Z \mf c_{1}\oplus\mb Z \mf e\to\mb Z \mf c_{0}\oplus\mb Z \mf c_{1}\cong H_{1}(\mb T\times\mb I)$$ s'identifie \`{a} une matrice de la forme
$$\left(\begin{array}{ccc}
1 & 0 & k\\
0 & 1 & m
\end{array}\right),$$ $(m,k)\in \mb Z^{2}$ v\'{e}rifient que
$\mr{var}_{f}(\mf d)=k \mf c_{0}+m \mf c_{1}$ o\`{u} $\mf d$ est le g\'{e}n\'{e}rateur de $H_{1}(\mb T\times\mb I,\mb T\times\mb\partial \mb I;\mb Z)\cong\mb Z$ qui joigne les deux composantes connexes de $\mb T\times\partial\mb I$. Ceci compl\`{e}te la preuve de la proposition.
\end{dem2}

\section{Groupe d'automorphismes d'un germe de courbe}\label{5}

\'{E}tant donn\'{e}e un germe de courbe plane $S$,  nous notons :
\begin{itemize}
\item $\mc G_{S}$ l'ensemble de marquages de $S$ par lui m\^{e}me, 
qui est un groupe pour la composition;
\item
 $\Gamma_{S}$ le groupe fondamental du tube de Milnor \'{e}point\'{e} $T_{\eta}\setminus S$;
\item
$\mr{Out}(\Gamma_{S}):=\mr{Aut}(\Gamma_{S})/\mr{Inn(\Gamma_{S})}$ le groupe d'automorphismes ext\'{e}rieurs de $\Gamma_{S}$;
\item $\mr{Out}_{g}(\Gamma_{S})$ le sous-groupe de $\mr{Out}(\Gamma_{S})$ form\'{e} des automorphismes ext\'{e}rieurs g\'{e}om\'{e}triques, 
cf. la d\'{e}finition \ref{isomgeom}.
\end{itemize}

\begin{teo}
L'application $*:\mc G_{S}\to \mr{Out}(\Gamma_{S})$, qui a chaque marquage $[h]$ associe son action $h_{*}$ sur le groupe fondamental $\Gamma_{S}$, est un isomorphisme sur $\mr{Out}_{g}(\Gamma_{S})$.
\end{teo}

\begin{proof}
L'application $*$ est bien d\'{e}finie pr\'{e}cis\'{e}ment parce qu'on consid\`{e}re des automorphismes ext\'{e}rieurs de $\Gamma_{S}$ qui \'{e}liminent l'ambigu\"{\i}t\'{e} du choix de $h$ dans la classe fondamental $[h]$. L'application $*$ est trivialement un morphisme de groupes injectif
gr\^{a}ce \`{a} la Proposition~\ref{fondequivhomot}
car $T_{\eta}\setminus S$ est un espace $K(\Gamma_{S},1)$. Finalement, la surjectivit\'{e} sur $\mr{Out}_{g}(\Gamma_{S})$ est cons\'{e}quence du corollaire \ref{invgeo}.
\end{proof}

\begin{cor}\label{ext}
Tout \'{e}l\'{e}ment de  $\mr{Out}_{g}(\Gamma_{S})$ est r\'{e}alisable par un hom\'{e}o\-mor\-phis\-me excellent de $(T_{\eta},S)$ sur lui m\^{e}me.
\end{cor}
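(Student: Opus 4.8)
\medskip

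\noindent\textbf{Esquisse de preuve.} Le plan est de d\'{e}duire ce corollaire de la combinaison du th\'{e}or\`{e}me qui pr\'{e}c\`{e}de (identification de $\mc G_S$ avec $\mr{Out}_g(\Gamma_S)$) et du th\'{e}or\`{e}me~\ref{teomarquagexcellent}. Soit donc $\varphi\in\mr{Out}_g(\Gamma_S)$. La premi\`{e}re \'{e}tape est d'utiliser la surjectivit\'{e} de l'application $*:\mc G_S\to\mr{Out}_g(\Gamma_S)$ fournie par le th\'{e}or\`{e}me pr\'{e}c\'{e}dent : il existe un marquage $\mf f\in\mc G_S$ de $S$ par elle-m\^{e}me tel que $*(\mf f)=\varphi$. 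On choisit un germe d'hom\'{e}omorphisme $h:(\mb C^2,0)\to(\mb C^2,0)$ repr\'{e}sentant $\mf f$, avec $h(S,0)=(S,0)$, puis sa restriction $h:\mb B_\varepsilon\iso h(\mb B_\varepsilon)\subset\mb B$ pour $\varepsilon>0$ assez petit.

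La deuxi\`{e}me \'{e}tape est d'appliquer le th\'{e}or\`{e}me~\ref{teomarquagexcellent} au cas particulier $S'=S$, en prenant la m\^{e}me boule $\mb B'=\mb B$, la m\^{e}me \'{e}quation $f'=f$ et le m\^{e}me syst\`{e}me local $\mc L'=\mc L$ que celui fix\'{e} \`{a} la section~\ref{ssectubesMilnor}. On obtient ainsi un hom\'{e}omorphisme $\Phi:T_\eta\iso T_{\eta'}$, avec $\Phi(S)=S$, \emph{excellent} pour $\mc L$ au sens de~\ref{homeoexcellents}, et dont la restriction \`{a} $T_\eta^\ast$ est fondamentalement \'{e}quivalente \`{a} $h_{|\mb B_\varepsilon^\ast}$. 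Comme $T_\eta$ et $T_{\eta'}$ sont deux $4$-tubes de Milnor de \emph{la m\^{e}me} courbe $S$, la troisi\`{e}me \'{e}tape est de les identifier par un hom\'{e}omorphisme ambiant $\iota:T_{\eta'}\iso T_\eta$ fixant $S$ et chaque composante de $\mc D$ : on le construit \`{a} l'aide du flot du champ $\mc X$ (et de $X$) introduit \`{a} la section~\ref{ssectubesMilnor}, en exploitant que $\mc X\cdot(f\circ E)=f\circ E$, que $\mc X$ est tangent aux fibres de Hopf sur les $\mc T_\eta(K_D)$ et holomorphe au voisinage de $\sing(\mc D)$. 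Ces propri\'{e}t\'{e}s font de $\iota$ un hom\'{e}omorphisme excellent, donc $\Psi:=\iota\circ\Phi:(T_\eta,S)\to(T_\eta,S)$ est un hom\'{e}omorphisme excellent de $(T_\eta,S)$ sur lui-m\^{e}me ; de plus le flot fournit une homotopie, trac\'{e}e dans $\mb B^\ast$, entre l'inclusion $T_{\eta'}^\ast\hookrightarrow\mb B^\ast$ et $\iota$, de sorte que $\Psi_{|T_\eta^\ast}$ reste fondamentalement \'{e}quivalent \`{a} $h_{|\mb B_\varepsilon^\ast}$.

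Il reste enfin \`{a} constater que $\Psi$ r\'{e}alise $\varphi$ : puisque $\Psi_{|T_\eta^\ast}\asymp h_{|\mb B_\varepsilon^\ast}$, l'hom\'{e}omorphisme $\Psi$ repr\'{e}sente le marquage $\mf f$ (proposition~\ref{fondequivhomot}), d'o\`{u} $*([\Psi])=*(\mf f)=\varphi$, i.e. l'action de $\Psi$ sur $\Gamma_S$ co\"{\i}ncide avec $\varphi$ modulo automorphisme int\'{e}rieur. Ceci ach\`{e}ve le plan.

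\noindent L'essentiel du travail \'{e}tant port\'{e} par le th\'{e}or\`{e}me~\ref{teomarquagexcellent} et par le th\'{e}or\`{e}me pr\'{e}c\'{e}dent, le seul point qui demande un peu de soin — et donc le principal obstacle, bien qu'il soit mineur — est la construction de l'identification excellente $\iota$ entre deux $4$-tubes de Milnor de $S$ et la v\'{e}rification qu'elle pr\'{e}serve l'\'{e}quivalence fondamentale ; tout le reste est purement formel. On peut d'ailleurs all\'{e}ger ce point en observant que n'importe quels deux $4$-tubes de Milnor de $S$ sont reli\'{e}s par un tel $\iota$ de fa\c{c}on standard, exactement comme les structures produit (\ref{strproduit}) d\'{e}j\`{a} utilis\'{e}es.
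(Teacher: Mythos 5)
Votre preuve est correcte et suit essentiellement la d\'{e}marche du papier : le corollaire y est pr\'{e}sent\'{e} comme cons\'{e}quence imm\'{e}diate du th\'{e}or\`{e}me d'isomorphisme $*:\mc G_{S}\to\mr{Out}_{g}(\Gamma_{S})$ (surjectivit\'{e} via le corollaire~\ref{invgeo}) et du th\'{e}or\`{e}me~\ref{teomarquagexcellent} appliqu\'{e} au cas $S'=S$, ce que vous explicitez fid\`{e}lement. Le seul point non purement formel --- l'identification excellente $\iota$ de $T_{\eta'}$ avec $T_{\eta}$ au moyen du flot du champ $X$ de la section~\ref{ssectubesMilnor}, et sa compatibilit\'{e} avec l'\'{e}quivalence fondamentale --- est correctement identifi\'{e} et trait\'{e} dans votre esquisse.
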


Soit $\mb A_{S}$ l'arbre dual pond\'{e}r\'{e} de la r\'{e}solution minimale de $S$ et $\mf S_{S}$ le groupe de permutations des composantes irr\'{e}ductibles de $S$.
Il existe deux morphismes naturels bien d\'{e}finis $\sigma:\mc G_{S}\to\mf S_{S}$ et $\bar\sigma:\mr{Aut}(\mb A_{S})\to\mf S_{S}$.
L'existence d'un hom\'{e}omorphisme excellent dans chaque classe d'homotopie de $\mc G_{S}$ et l'injectivit\'{e} de $\bar\sigma$, prouv\'{e} dans le lemme suivant, permet de consid\'{e}rer un morphisme bien d\'{e}fini
$$\alpha:\mc G_{S}\to \mr{Aut}(\mb A_{S})$$
tel que $\sigma=\bar\sigma\circ\alpha$.

\begin{lema}\label{a} Avec les notations pr\'{e}c\'{e}dentes on a que :
\begin{enumerate}[(i)]
\item $\bar\sigma$ est injective, et en cons\'{e}quence $\ker\sigma=\ker\alpha$;
\item $\alpha$ est surjective et donc $\mr{Im}\,\sigma=\mr{Im}\,\bar\sigma$.
\end{enumerate}
\end{lema}

\begin{proof}
La premi\`{e}re assertion se d\'{e}montre facilement par r\'{e}currence sur le nombre $r$ de composantes irr\'{e}ductibles de $S$. Le cas $r=1$ se d\'{e}montre par r\'{e}currence sur le nombre $g$ de paires de Puisseux de $S$. Quand $g=1$ une description tr\`{e}s explicite de la situation permet de montrer que  $\bar\sigma$ est injective dans ce cas.
%
La deuxi\`{e}me assertion se d\'{e}montre aussi par r\'{e}currence sur le nombre de composantes irr\'{e}ductibles de $S$. Quand $S$ est irr\'{e}ductible $\mr{Aut}(\mb A_{S})=\{\mr{id}\}$ d'apr\`{e}s (i).
Si $S_{i}$ et $S_{j}$
sont deux composantes irr\'{e}ductibles de $S$ exchang\'{e}es par $g\in\mr{Aut}(\mb A_{S})$ alors les sous-arbres pond\'{e}r\'{e}s correspondants aux r\'{e}ductions de $S_{i}$ et $S_{j}$ sont isomorphes. Dans ce cas il est facile \`{a} voir qu'il existe un hom\'{e}omorphisme de $(\mc T_{\eta},\mc D)$ qui induit $g$ et qui est l'identit\'{e} en dehors d'un voisinage de de la partie du diviseur $\mc D$ qui n'intersecte pas les sous-arbres correspondants \`{a} $S_{i}$ et $S_{j}$.
\end{proof}

Toujours avec les notations (\ref{KD}), (\ref{XK}), (\ref{4blocselem}), (\ref{4voischaine}), pour une cha\^{\i}ne $\mc C\in \mf C$ nous posons :  $K_{\mc C}:=\mc T_{\mc C}\cap \mc D$,
$\mc T_{\eta}(\partial K_{\mc C})=\mc T_{\eta}(\partial (K_{\mc C}\cap D_{0}))\cup\mc T_{\eta}(\partial (K_{\mc C}\cap D_{l_{\mc C}+1}))$.
\begin{defin}
%
Pour chaque \'{e}l\'{e}ment $B\in\mf B:=\mf R\cup\mf C$
consid\'{e}rons
le groupe $\mc G_{B}$ des classes d'homotopie relatives \`{a} $K_{B}\cup\mc T_{\eta}(\partial K_{B})$ d'hom\'{e}o\-mor\-phis\-mes de 
$\mc T_{B}$ qui laissent $K_{B}$ invariant et sont l'identit\'{e}  sur $\mc T_{\eta}(\partial K_{B})$.
\end{defin}

Tout \'{e}l\'{e}ment de $\mc G_{B}$ induit un marquage excellent \`{a} support contenu dans $\mc T_{\eta}(K_{B})$. Nous avons donc un morphisme bien d\'{e}fini
$$\beta:\bigoplus\limits_{B\in\mf B}\mc G_{B}\to\mc G_{S}.$$

\begin{prop}\label{GB}
Consid\'{e}rons $D\in\mf R$ et $\mc C\in\mf C$.
\begin{enumerate}[(i)]
\item Le groupe $\mc G_{D}$ est isomorphe au groupe $A(D^{\bullet})$ des classes d'homotopie relative \`{a} $D\cap\mr{Sing}(\mc D)$ d'hom\'{e}omorphismes de $D$ fixant chaque point de $S(D)$.
\item Tout \'{e}l\'{e}ment de $\mc G_{\mc C}$ est un twist de Dehn le long de $\mc C$, cf.~section~(\ref{dehn}). En particulier, $\mc G_{\mc C}\cong\mb Z^{2}$.
\end{enumerate}
\end{prop}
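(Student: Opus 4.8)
La preuve de (i) repose sur le fait que $\rho_D:\mc T_D\to K_D$ est un fibré en disques globalement trivial : dans les coordonnées trivialisantes $(u+iv,\rho_D):\mc T_D\iso\mb D_1\times K_D$, le diviseur $\mc D\cap\mc T_D=K_D$ est la section nulle et $\mc T_\eta(\partial K_D)$ est situé au-dessus de $\partial K_D$. Un représentant $\phi$ d'un élément de $\mc G_D$ préserve $K_D$ et est l'identité sur $\mc T_\eta(\partial K_D)\supset\partial K_D$, donc sa restriction $\phi_{|K_D}$ est un homéomorphisme de $K_D$ fixant $\partial K_D$. En bouchant les $v(D)$ cercles de $\partial K_D$ par des disques pour reconstituer $D\cong\mb S^2$ pointé par $S(D)$, on obtient un morphisme $r:\mc G_D\to A(D^{\bullet})$, bien défini car les homotopies en jeu préservent $K_D$. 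La surjectivité est immédiate : partant d'un homéomorphisme de $(D,S(D))$, on le rend égal à l'identité au voisinage de $S(D)$, ce qui fournit un homéomorphisme $\psi$ de $K_D$ égal à l'identité près de $\partial K_D$ ; alors $\psi\times\mr{id}_{\mb D_1}$ appartient à $\mc G_D$ et se projette sur la classe de départ.

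Pour l'injectivité de $r$ : si $r([\phi])$ est triviale, une isotopie de $K_D$ relative à $\partial K_D$ entre $\phi_{|K_D}$ et $\mr{id}$, croisée avec $\mr{id}_{\mb D_1}$, homotope $\phi$ dans $\mc G_D$ à un homéomorphisme valant l'identité sur $K_D$. Comme le marquage induit admet un représentant excellent (remarque précédant la proposition, et unicité des voisinages tubulaires), on peut en outre supposer que $\phi$ conjugue la fibration de Hopf $\rho_D$ à elle-même, c'est-à-dire que $\phi$ est fibré au-dessus de $\mr{id}_{K_D}$ ; dans chaque fibre $\phi$ induit un homéomorphisme de $(\mb D_1,0)$. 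L'astuce d'Alexander appliquée fibre à fibre, en conservant l'identité au-dessus de $\partial K_D$ et la section nulle, homotope $\phi$ dans $\mc G_D$ à une rotation fibrée $(p,z)\mapsto(p,e^{i\theta(p)}z)$ avec $\theta:K_D\to\mb S^1$ et $\theta_{|\partial K_D}\equiv 0$. L'isotopie $R_s(p,z):=(p,e^{is\theta(p)}z)$, $s\in[0,1]$, reste dans $\mc G_D$ et connecte $\phi$ à l'identité, donc $[\phi]=0$. Le point délicat est ce « dévissage » en dimension quatre : la rotation fibrée, qui définirait un élément non nul de $H^1(K_D;\mb Z)\cong\mb Z^{v(D)-1}$ dans le groupe des classes d'homéomorphismes du bord $\mc M_D$, devient triviale dans $\mc G_D$ grâce à la direction supplémentaire du disque.

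Pour (ii), on restreint au bord. Le bloc $\mc M_{\mc C}=\mc T_{\mc C}\cap\mc M_\eta$ est un tore épaissi $\cong\mb T\times[-1,1]$, cf. (\ref{torepaissi}), de bord $\partial\mc M_{\mc C}=\mc M_{\mc C}\cap\mc T_\eta(\partial K_{\mc C})$, lequel est fixé par tout représentant $\phi$ d'un élément de $\mc G_{\mc C}$. On obtient ainsi un morphisme de $\mc G_{\mc C}$ vers l'ensemble des classes d'homotopie relatives à $\partial\mc M_{\mc C}$ d'homéomorphismes de $\mc M_{\mc C}$ égaux à l'identité sur $\partial\mc M_{\mc C}$. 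D'après la proposition (\ref{eleinparticulier}), ce dernier s'identifie via le morphisme de variation à $\mr{Hom}(H_1(\mc M_{\mc C},\partial\mc M_{\mc C};\mb Z),H_1(\mc M_{\mc C};\mb Z))\cong\mr{Hom}(\mb Z\mf d,\mb Z^2)\cong\mb Z^2$. Le lemme (\ref{realvar}) montre que tout tel morphisme de variation est réalisé par un twist de Dehn le long de $\mc C$ ; comme la variation est additive ($\mathrm{var}_{\chi_1\circ\chi_2}=\mathrm{var}_{\chi_1}+\mathrm{var}_{\chi_2}$), les twists forment un sous-groupe de $\mc G_{\mc C}$ sur lequel le morphisme ci-dessus est un isomorphisme sur $\mb Z^2$. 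Reste l'injectivité sur $\mc G_{\mc C}$ tout entier : étant donné $[\phi]\in\mc G_{\mc C}$, on choisit le twist $\Psi$ de même variation que $\phi_{|\mc M_{\mc C}}$, de sorte que $\phi':=\Psi^{-1}\circ\phi$ vérifie $\phi'_{|\mc M_{\mc C}}\simeq\mr{id}$ relativement à $\partial\mc M_{\mc C}$ ; on montre $[\phi']=[\mr{id}]$ dans $\mc G_{\mc C}$ en adaptant l'argument de (i) : $\phi'_{|K_{\mc C}}$ est isotope à $\mr{id}$ relativement à $\partial K_{\mc C}$ (le groupe des classes d'homéomorphismes d'une chaîne de sphères fixant ses deux extrémités est trivial, puisque tout homéomorphisme de $\mb S^2$ fixant deux points est isotope à l'identité relativement à ces points), et les directions fibrées se dévissent comme en (i), à l'aide de la structure produit $\mc T_{\mc C}^\ast\cong\mc M_{\mc C}\times\,]0,\eta]$ de (\ref{retractT}) et d'un représentant excellent (\ref{homeoexcellents}). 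On conclut $\mc G_{\mc C}\cong\mb Z^2$, tout élément étant un twist de Dehn le long de $\mc C$.

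Dans les deux cas, l'obstacle principal est la partie injectivité : il faut contrôler la manière dont un homéomorphisme du $4$-tube, trivial en restriction au bord $\mc M$ (resp. à la section nulle du fibré en disques) et au bord latéral, se déforme sur l'identité en préservant le diviseur. C'est précisément là qu'intervient de façon essentielle la dimension quatre — via le dévissage des rotations fibrées, qui n'a pas d'analogue en dimension trois — et là qu'on réutilise, comme au chapitre précédent, l'existence de représentants excellents ainsi que les structures produit des tubes de Milnor.
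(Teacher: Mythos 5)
Votre d\'{e}monstration est correcte et suit pour l'essentiel la m\^{e}me voie que celle du texte : pour (i), trivialiser le fibr\'{e} en disques $\mc T_{\eta}(K_{D})\cong K_{D}\times\mb D$ et tuer la composante fibr\'{e}e $g:K_{D}\to\mr{Homeo}(\mb D,0)\simeq\mb S^{1}$ pour ne retenir que $[f]\in A(D^{\bullet})$, et pour (ii), se ramener au bord $\mc M_{\mc C}$ puis combiner la proposition (\ref{eleinparticulier}) et le lemme (\ref{realvar}). Seule r\'{e}serve : votre formule $R_{s}(p,z)=(p,e^{is\theta(p)}z)$ n'est pas d\'{e}finie lorsque $\theta:K_{D}\to\mb S^{1}$ ne se rel\`{e}ve pas \`{a} $\mb R$, mais l'homotopie non fibr\'{e}e $H_{t}(p,z)=\bigl(p,((1-t)e^{i\theta(p)}+t)z\bigr)$, stationnaire sur $K_{D}\cup\mc T_{\eta}(\partial K_{D})$, rend rigoureux votre \emph{d\'{e}vissage} --- c'est exactement le pas que le texte exp\'{e}die en affirmant que $(f,g)$ est isotope \`{a} $(f,\mr{id}_{\mb D})$.
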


\begin{proof} Pour prouver (i) on trivialise $\mc T_{\eta}(K_{D})\cong K_{D}\times\mb D$
et on \'{e}crit un repr\'{e}sentant excellent d'un \'{e}l\'{e}ment quelconque $\mf f$ de $\mc G_{D}$  sous la forme $(f,g)$ o\`{u} $f:K_{D}\to K_{D}$  est un hom\'{e}omorphisme valant l'identit\'{e} sur $\partial K_{D}$ et $g:K_{D}\to \mr{Homeo}(\mb D,0)\simeq \mb S^{1}$. Comme $g_{|\partial K_{D}}$ est constante \'{e}gal \`{a} $\mr{id}_{\mb D}$ il en r\'{e}sulte que $(f,g)$ est isotope \`{a} $(f,\mr{id}_{\mb D})$. Ainsi, $\mf f=[(f,g)]$ est compl\`{e}tement d\'{e}termin\'{e} par $[f]\in A(D^{\bullet})$. R\'{e}ciproquement, tout \'{e}l\'{e}ment $[f]\in A(D^{\bullet})$ d\'{e}termine de fa\c{c}on univoque l'\'{e}l\'{e}ment $[(f,\mr{id}_{\mb D})]\in\mc G_{D}$.
D'autre part, l'assertion (ii) est une cons\'{e}quence imm\'{e}diate de la Proposition~\ref{eleinparticulier}.
\end{proof}

Le groupe modulaire pur $A(D^{\bullet})$ s'identifie au quotient du groupe d'Artin de tresses pures du plan \`{a} $v(D)-1$ brins, par son centre qui est isomorphe a $\mb Z$. Il s'identifie aussi au quotient du groupe de tresses pures de la sph\`{e}re a $v(D)$ brins, par son centre qui est isomorphe \`{a} $\mb Z/2\mb Z$, voir par exemple \cite{Birman}. Nous appellerons les \'{e}l\'{e}ments de $\mc G_{D}$ \emph{twists d'Artin au dessus de $D$}.

\medskip

D'apr\`{e}s la Proposition~\ref{GB}, le th\'{e}or\`{e}me~\ref{thmB} de l'introduction affirme que l'image de $\beta$ est le noyau $\mc G_{S}^{0}$ de $\sigma$, c'est  \`{a} dire, les twists d'Artin et les twists de Dehn engendrent le sous-groupe d'indice fini $\mc G_{S}^{0}$ de $\mc G_{S}$.






\begin{proof}[Preuve du th\'{e}or\`{e}me \ref{thmB}]
D'apr\`{e}s le th\'{e}or\`{e}me principal et le lemme \ref{a}, tout \'{e}l\'{e}ment de $\mc G_{S}^{0}$ peut \^{e}tre repr\'{e}sent\'{e} par un hom\'{e}omorphisme excellent $f:\mc T_{\eta}\to\mc T_{\eta}$ qui fixe chaque composante irr\'{e}ductible de $\mc D$ et qui est l'identit\'{e}\footnote{Ceci est possible gr\^{a}ce \`{a} l'holomorphie de $f$ au voisinage de chaque singularit\'{e} de $\mc D$.} sur le bord de chaque bloc $\mc T_{\eta}(K_{D})$ et $\mc T_{\eta}(\mc C)$.
D'apr\`{e}s le th\'{e}or\`{e}me de Seifert-Van Kampen, le groupe fondamental
$\Gamma_{S}$ est le produit amalgam\'{e} des groupes fondamentaux $\Gamma_{S}(D)=\pi_{1}(B_{D})$, $D\in\mf R$, des blocs Seifert de la d\'{e}composition JSJ de $M_{\eta}$ au dessus des groupes fondamentaux de ses tores essentiels $\Gamma_{S}(\mc C)=\pi_{1}(\mb T_{\mc C})$, $\mc C\in\mf C$.
Soit $D\in\mf R$ un sommet terminal de l'arbre  de JSJ
de $M_{\eta}$, cf. (\ref{grapheJSJ}), et $\mc C\in\mf C$ sa cha\^{\i}ne adjacente.
En composant $f$ par deux \'{e}l\'{e}ments convenables de $\mc G_{D}$ et $\mc G_{\mc C}$ on peut supposer que $f_{*}:\Gamma_{S}\to\Gamma_{S}$
est l'identit\'{e} sur $\Gamma_{S}(D)\supset\Gamma_{S}(\mc C)$. On conclut en raisonnant par r\'{e}currence sur le nombre de blocs de Seifert sur lesquels $f_{*}$ n'est pas l'identit\'{e}.
\end{proof}

L'exemple suivant montre que l'\'{e}pimorphisme
du th\'{e}or\`{e}me~\ref{thmB} 
n'est pas en g\'{e}n\'{e}ral injectif. Ainsi il peut exister d'autres relations  entre les g\'{e}n\'{e}rateurs de $\mc G_{D}$ et $\mc G_{\mc C}$ \`{a} part de celles qu'on vient d'expliciter.

\begin{ex}
La courbe $S=f^{-1}(0)$ avec $f(x,y)=y(y^{2}-x^{3})^{2}-x^{8}$ a deux paires de Puiseux, le diviseur exceptionnel de sa d\'{e}singularisation minimale consiste en cinq droites $E_{i}$, $i=1,\ldots,5$, num\'{e}rot\'{e}s par ordre d'apparition et ayant pour matrice d'intersection $$\left(\begin{array}{ccccc}
-3 & 0 & 1 & 0 & 0\\
0 & -2 & 1 & 0 & 0\\
1 & 1 & -3 & 0 & 1\\
0 & 0 & 0 & -2 & 1\\
0 & 0 & 1 & 1 & -1
\end{array}\right).$$
Dans ce cas, il y a deux diviseurs de valence trois $E_{3}$ et $E_{5}$ avec deux (resp. une) branches mortes adjacentes $E_{1}$, $E_{2}$ (resp. $E_{4}$). Il n'y a qu'une cha\^{\i}ne $\mc C$ de longueur $0$ correspondant au point $E_{3}\cap E_{5}$.
Le groupe fondamental $\Gamma_{S}$ d'un tube de Milnor de $f$ moins $S$ admet comme syst\`{e}me de g\'{e}n\'{e}rateurs
les classes d'homotopie $a_{1},b_{1},c_{1},b_{2},c_{2},d$ de lacets contenus dans des fibres de Hopf des diviseurs $E_{1},E_{2},E_{3},E_{4},E_{5}$ et $S$ respectivement.
Les relations de ces g\'{e}n\'{e}rateurs sont engendr\'{e}s par
$$a_{1}^{3}=c_{1}=b_{1}^{2},\quad a_{1}b_{1}c_{2}=c_{1}^{3},\quad  c_{2}=b_{2}^{2},\quad  c_{1}b_{2}d=c_{2}$$
et
\begin{equation}\label{comm}[c_{1},a_{1}]=[c_{1},b_{1}]=[c_{1},c_{2}]=[c_{2},b_{2}]=[c_{2},d]=1.
\end{equation}
En prenant le point de base convenablement,
l'action sur $\Gamma_{S}$  d'un twist de Dehn autour de $\mc C$ de type $(p,q)$ est de la forme suivante :
$$a_{1}\mapsto a_{1},\quad b_{1}=b_{1},\quad c_{1}\mapsto c_{1},\quad  b_{2}\mapsto c_{1}^{p}b_{2}c_{1}^{-p},\quad c_{2}\mapsto c_{2},\quad d\mapsto c_{1}^{p}dc_{1}^{-p},$$
qui d'apr\`{e}s les relations (\ref{comm})
co\"{\i}ncide avec l'automorphisme int\'{e}rieur associ\'{e} \`{a}  l'\'{e}l\'{e}ment $c_{1}^{p}c_{2}^{q}\in\Gamma_{S}$. Ainsi, dans ce cas,  $\beta(\mc G_{\mc C})\subset\ker(*)$ qui est trivial.
\end{ex}

\end{document}